\numberwithin{equation}{section}
\newcommand{\al}{\alpha}
\newcommand{\be}{\beta}
\newcommand{\ga}{\gamma}
\newcommand{\Ga}{\Gamma}
\newcommand{\ve}{\varepsilon}
\newcommand{\vp}{\varphi}
\newcommand{\ka}{\kappa}
\newcommand{\R}{\mathbb{R}}
\newcommand{\ccc}{\cdot\cdot\cdot}
\newcommand{\n}[1]{\Vert #1\Vert}
\newcommand{\bbn}[1]{\Big\Vert #1 \Big \Vert}
\newcommand{\lr}[1]{\left\{ #1\right\}}
\newcommand{\lrc}[1]{\left[ #1\right]}
\newcommand{\lrs}[1]{\left( #1\right)}
\newcommand{\lra}[1]{\langle #1\rangle}
\newcommand{\abs}[1]{|#1|}
\newcommand{\bbabs}[1]{\Big | #1 \Big|}
\newcommand{\wt}[1]{\widetilde{#1}}
\newcommand{\wq}{\infty}
\newcommand{\pa}{\partial}
\newcommand{\ol}{\overline}
\begin{document}

\newtheorem{theorem}{Theorem}[section]
\newtheorem{lemma}[theorem]{Lemma}
\newtheorem{proposition}[theorem]{Proposition}
\newtheorem{corollary}[theorem]{Corollary}
\theoremstyle{definition}
\newtheorem{definition}[theorem]{Definition}
\newtheorem{example}[theorem]{Example}
\newtheorem{xca}[theorem]{Exercise}
\theoremstyle{remark}
\newtheorem{remark}[theorem]{Remark}

\numberwithin{equation}{section}

\title[Quantum Mean-field and Semiclassical Limit]{On the mean-field and semiclassical limit from quantum $N$-body dynamics}

\author[X. Chen]{Xuwen Chen}
\address{Department of Mathematics, University of Rochester, Rochester, NY 14627, USA}
\email{xuwenmath@gmail.com}

\author[S. Shen]{Shunlin Shen}
\address{School of Mathematical Sciences, Peking University, Beijing, 100871, China}
\email{slshen@pku.edu.cn}

\author[Z. Zhang]{Zhifei Zhang}
\address{School of Mathematical Sciences, Peking University, Beijing, 100871, China}

\email{zfzhang@math.pku.edu.cn}

\subjclass[2010]{Primary 35Q31, 76N10, 81V70; Secondary 35Q55, 81Q05.}

\date{}

\dedicatory{}

\begin{abstract}
We study the mean-field and semiclassical limit of the quantum many-body dynamics with a repulsive $\delta$-type potential $N^{3\be}V(N^{\be}x)$ and a Coulomb potential, which leads to a macroscopic fluid equation, the Euler-Poisson equation with pressure.
We prove quantitative strong convergence of the quantum mass and momentum densities up to the first blow up time of the limiting equation.
The main ingredient is a functional inequality on the $\delta$-type potential for the almost optimal case $\be\in(0,1)$, for which
we give an analysis of the singular correlation structure between particles.
 \end{abstract}
\keywords{Mean-field Limit, Semiclassical Limit, Compressible Euler Equation, Quantum Many-body Dynamics, Modulated Energy.}
\maketitle
\tableofcontents

\section{Introduction}

\subsection{Background and Problems}
The foundations of microscopic physics are Newton's and
Schr\"{o}dinger equations in the classical and the quantum case respectively.
By the first principle of quantum mechanics, a quantum
system of $N$ particles is described by a wave function satisfying a linear $N$-body Schr\"{o}dinger equation.
In realistic systems like fluids, the particle number is so large that
these $N$-body equations are almost impossible to solve. The macroscopic dynamics are therefore
modeled by phenomenological equations such as the Euler or the Navier-Stokes equations, which are an important part of many areas of pure and applied mathematics, science, and engineering.
These macroscopic equations are usually derived from continuum under ideal assumptions, but they are, in principle,
consequences of the microscopic physical laws of Newton or
Schr\"{o}dinger.
A key goal of statistical mechanics is to justify these macroscopic equations from
microscopic theories in appropriate limit regimes. It is thus of fundamental interest to establish macroscopic equations from the microscopic level.

In the current paper, we start from the bosonic\footnote{N$_2$ and O$_2$ molecules are bosons (99.03\% of air) and 99.05\% H$_{2}$O molecules are bosons.} quantum many-body dynamics with $\delta$-type and Coulomb potentials, and study the mean-field and semiclassical limit which would lead to macroscopic fluid equations as particle number $N$ tends to infinity and Planck's constant $\hbar$ tends to zero.
The dynamics of $N$ quantum particles in 3D are governed by, according to the superposition principle, the
linear $N$-body Schr\"{o}dinger equation:
\begin{align}\label{equ:N-body schroedinger equation}
i\hbar\pa_{t}\psi_{N,\hbar}=H_{N,\hbar}\psi_{N,\hbar}.
\end{align}
Our Hamiltonian $H_{N,\hbar}$ is
\begin{align}\label{equ:hamiltonian}
H_{N,\hbar}=\sum_{j=1}^{N}-\frac{1}{2}\hbar^{2}\Delta_{x_{j}}+\frac{1}{N}\sum_{1\leq j<k\leq N}V_{N}(x_{j}-x_{k})+\frac{\kappa}{N}\sum_{1\leq j<k\leq N}V_{c}(x_{j}-x_{k}),
\end{align}
where the factor $1/N$ is the mean-field averaging factor.
The $\delta$-type and Coulomb potentials are
\begin{equation}
\begin{cases}
&V_{N}(x)=N^{3\be}V(N^{\be}x),\\
 &V_{c}(x)=\frac{1}{|x|},
\end{cases}
\end{equation}
in which, the parameter $\beta\in[0,1]$ characterises different density regimes which correspond to different physical situations.

To have a fixed number of variables in the $N\to \infty$ process, we define
the marginal densities $\ga_{N,\hbar}^{(k)}(t)$ associated
with $\psi_{N,\hbar}(t)=e^{itH_{N,\hbar}}\psi_{N,\hbar}(0)$ in kernel form by
\begin{align}
\ga_{N,\hbar}^{(k)}(t,X_{k},X_{k}')=\int \psi_{N,\hbar}(t,X_{k},X_{N-k})\ol{\psi_{N,\hbar}}(t,X_{k}',X_{N-k})dX_{N-k},
\end{align}
where $X_{k}=(x_{1},...,x_{k})\in \R^{3k}$ and $X_{N-k}=(x_{k+1},...,x_{N})\in \R^{3(N-k)}$. It is believed that nonlinear Schr\"{o}dinger equations (NLS) is the mean-field
limit equation for these quantum $N$-body dynamics, that is,
\begin{align}
\ga_{N,\hbar}^{(1)}(t,x_{1},x_{1}')\sim |\phi(t)\rangle \langle \phi(t)|,
\end{align}
where $\phi(t)$ solves NLS.

There is a large amount of literature devoted to the mean-field theory from quantum many-body dynamics, such as \cite{AGT07,BdOS15,BCS17,BS19,CHPS15,CP10,CP11,CP13,CP14,CP14derivation,CPT10,Che12,Che13,CH13,CH16on,CH16correlation,CH16focusing,CH17,CH19,CH22quantitative,
CH22unconditional,CSZ22,CS14,EY01,ESY06,ESY07,ESY09,ESY10,FKS09,GMM10,GMM11,GM13,GM17,GSS14,HS16,HS19,HTX15,HTX16,KP10,KSS11,KM08,RS09,
She21,Soh15,Soh16,SS15,Xie15}.
In particular, for the case of defocusing $\delta$-type potential, it was Erd{\"o}s, Schlein, and Yau who first rigorously derived the 3D cubic defocusing NLS from quantum many-body dynamics in their groundbreaking papers \cite{ESY06,ESY07,ESY09,ESY10}.\footnote{Around the same time, see also \cite{AGT07} for 1D case.}In their analysis, apart from the uniqueness of the infinite hierarchy which was widely regarded as the most involved part,
understanding the singular correlation structure generated by the $\delta$-type potential was one of the main challenges.

In the mean-field limit as the particle number $N$ tends to infinity, the potential $V_{N}$ converges formally to the Dirac-delta interaction $(\int V)\delta$, also called the Fermi potential. For $\be<\frac{1}{3}$, the average distance between the particles, which is $O(N^{-\frac{1}{3}})$, is much less than the range of the interaction potential, which
is $O(N^{-\beta})$,
and there are many but weak correlations. For $\beta>\frac{1}{3}$, then the analysis is much more involved
because of the strong correlations between particles.
For $\beta$ close or equal to $1$, as the scaling is starting to match the Laplacian operator, it is expected that the $\delta$-type potential generates an interparticle singular correlation structure, closely related to the zero-energy scattering equation
\begin{equation}\label{equ:scattering equation,appendix}
	\left\{
	\begin{aligned}
		&\lrs{-\hbar^{2}\Delta+\frac{1}{N}V_{N}(x)}f_{N,\hbar}(x)=0,\\
		&\lim_{|x|\to \infty} f_{N,\hbar}(x)=1.
	\end{aligned}
	\right.
\end{equation}
The scattering function $(1-f_{N,\hbar}(x))$ varies effectively on the short scale for $|x|\lesssim N^{-\be}$ and has the same singularity as the Coulomb potential at infinity.
It is believed that\footnote{See for example \cite{LSSY05} for the static case and \cite{ESY06,ESY09,ESY10} for the time-dependent case.}, instead of the factorization property, that is,
\begin{align*}
\ga_{N,\hbar}^{(2)}(t,x_{1},x_{2};x_{1}',x_{2}')\sim \ga_{N,\hbar}^{(1)}(t,x_{1};x_{1}')\ga_{N,\hbar}^{(1)}(t,x_{2};x_{2}'),
\end{align*}
the marginal densities should be considered as
\begin{align*}
\ga_{N,\hbar}^{(2)}(t,x_{1},x_{2};x_{1}',x_{2}')\sim f_{N,\hbar}(x_{1}-x_{2})f_{N,\hbar}(x_{1}'-x_{2}')\ga_{N,\hbar}^{(1)}(t,x_{1};x_{1}')\ga_{N,\hbar}^{(1)}(t,x_{2};x_{2}').
\end{align*}
The singular correlation structure is very subtle and plays a crucial role in the mean-field limit from quantum $N$-body dynamics, as it gives an $O(N^{\beta})$ correction to the $N$-body energy.

For the semiclassical limit, the connection between Schr\"{o}dinger-type equations and the classical fluid mechanics was already noted in 1927 by Madelung \cite{Mad27}. Starting from a single NLS, the asymptotic behavior of the wave function as the
Planck's constant goes to zero is studied by many authors using various approaches based on Madelung's fluid
mechanical formulation. See, for example, \cite{Gre98,JLM99,LZ06,Zha02}. For a more detailed survey related to semiclassical limits, see \cite{Car08,Zha08} and references within. There are many deep problems on the study of classical limiting dynamics
from quantum equations.

The joint mean-field and semiclassical limit from quantum $N$-body dynamics formally gives a direct connection between
quantum microscopic systems and classical macroscopic fluid equations. Providing a rigorous proof is certainly a challenging problem.
For the repulsive Coulomb potential, Golse and Paul \cite{GP21}, based on Serfaty's inequality
\cite[Proposition 1.1]{Ser20}, justified the weak convergence to pressureless Euler-Poisson in the mean-field and semiclassical limit. For the case of the $\delta$-type potential, in our previous work \cite{CSWZ23}, we derived the compressible Euler equations with strong and quantitative convergence rate from quantum many-body dynamics by a new strategy of combining the accuracy of the hierarchy method and the flexibility of the modulated energy method.
Subsequently, such a scheme was adopted in \cite{CSZ23} to obtain the quantitative convergence rate from quantum many-body dynamics to the pressureless Euler-Poisson equation.

Despite a series of progress on the mean-field and semiclassical limit from the quantum $N$-body dynamics with singular potentials, a number of challenges remain open:
\begin{enumerate}
\item The derivation of the full Euler-Poisson equation with pressure from the quantum many-body dynamics. In \cite{CSZ23,GP21}, the limiting Euler-Poisson equation is pressureless. However, the pressure is a fluid defining feature and essential for the macroscopic fluid equation. It is thus a fundamental question to understand the emergence of pressure from the microscopic level.
\item The large $\beta$ problem is known to be difficult in the mean-field and semiclassical limit due to the strong correlations between particles. The main challenge lies in the analysis of the singular correlation structure generated by the $\delta$-type potential.
\item To obtain the quantitative strong convergence rate, a double-exponential restriction between $N$ and $\hbar$ was needed in \cite{CSWZ23}, which is of course not optimal.
From the perspective of energy, the restriction should be at least polynomial. To relax the double-exponential restriction, it requires  new and finer techniques.
\item The scheme in \cite{CSWZ23} currently cannot deal with the $\mathbb{T}^{3}$ case, since its proof highly relies on a key collapsing estimate, which fails in the $H^{1}$ energy space for the $\mathbb{T}^{3}$ case as proven in \cite{GSS14}. Thus, the $\mathbb{T}^{3}$ case requires new ideas. The torus case is the beginning to understand other related and important problems, such as the microscopic descriptions of the Mach number and Knudsen number and their limit to incompressible fluids.
\end{enumerate}

In this paper, our goal is to settle the above open problems.

\subsection{Statement of the Main Theorem}
Starting from the quantum $N$-body dynamics \eqref{equ:N-body schroedinger equation}, we take the normalization that
$\n{\psi_{N,\hbar}(t)}_{L_{X_{N}}^{2}}=1$,
and define the quantum mass density and momentum density by
\begin{align}\label{equ:quantum mass and momentum density}
&\rho_{N,\hbar}^{(k)}(t,X_{k})=\ga_{N,\hbar}^{(k)}(t,X_{k};X_{k}),\quad
J_{N,h}^{(1)}(t,x)=\operatorname{Im}\lrs{\hbar\nabla_{x_{1}} \ga_{N,\hbar}^{(1)}}(t,x;x).
\end{align}

The limiting macroscopic equation would be the compressible Euler-Poisson equation with a pressure term $P=\frac{b_{0}}{2}\rho^{2}$, which is
(in velocity form)
\begin{equation}\label{equ:euler equation}
\begin{cases}
& \partial _{t}\rho +\nabla \cdot \left( \rho u\right) =0, \\
& \partial _{t}u+(u\cdot \nabla )u+b_{0}\nabla_{x}\rho+\kappa \nabla_{x}(V_{c}*\rho) =0,\\
& (\rho,u)|_{t=0}=(\rho^{in},u^{in}),
\end{cases}
\end{equation}
or (in momentum form)
\begin{equation}\label{equ:euler equation,momentum}
\begin{cases}
& \partial _{t}\rho +\operatorname{div} J =0,\\
& \partial _{t}J+\operatorname{div}\lrs{\frac{J\otimes J}{\rho}}+\frac{1}{2}\nabla \lrs{b_{0}\rho^{2}}+ \kappa \rho\nabla_{x}(V_{c}*\rho)=0,\\
& (\rho,J)|_{t=0}=(\rho^{in},J^{in}).
\end{cases}
\end{equation}
Here, as usual,
\begin{align*}
&\rho(t,x):\R\times \R^{3}\mapsto \R\\
&u(t,x)=(u^{1}(t,x),u^{2}(t,x),u^{3}(t,x)):\R\times \R^{3}\mapsto \R^{3}\\
&J(t,x)=\lrs{\rho u}(t,x):\R\times \R^{3}\mapsto \R^{3}
\end{align*}
are respectively the mass density, the velocity, and the momentum of the fluid. The coupling constant $b_{0}=\int V$ is the macroscopic effect of the microscopic interaction $V$. When the coefficient $\kappa=0$, the system $(\ref{equ:euler equation})$ is reduced to a compressible Euler equation.
Specifically, we consider the initial data satisfying the condition
\begin{equation}\label{equ:initial condition,euler}
	\left\{
	\begin{aligned}
\rho^{in}\in H^{s-1}(\R^{3}),\quad u^{in}\in H^{s}(\R^{3}),\\
\rho^{in}(x)\geq 0,\quad \int_{\R^{3}}\rho^{in}(x)dx=1,
	\end{aligned}
	\right.
\end{equation}
with $s>\frac{9}{2}$ and $s\in \mathbb{N}$, so that the Euler-Poisson system $(\ref{equ:euler equation})$ has a unique solution $(\rho,u)$ up to some time $T_{0}$ such that\footnote{We are not dealing with sharp well-posedness of \eqref{equ:euler equation} here. The local well-posedness of the Euler system here is known by the standard theory on hyperbolic systems,see \cite{Maj84,Mak86,MUK86}.}
\begin{equation}\label{equ:euler-poisson equation, regularity condition}
	\left\{
	\begin{aligned}
&\rho\in C([0,T_{0}];H^{s-1}(\R^{3})),\quad u\in C([0,T_{0}];H^{s}(\R^{3})),\\
&\rho(t,x)\geq 0,\quad \int_{\R^{3}} \rho(t,x)dx=1.
	\end{aligned}
	\right.
\end{equation}
\begin{theorem}\label{thm:main theorem}
Let $\be\in(0,1)$, $\ka\geq 0$ and the marginal densities $\Ga_{N,\hbar}=\lr{\ga_{N,\hbar}^{(k)}}$ associated with $\psi_{N,\hbar}$ be the solution to the $N$-body dynamics \eqref{equ:N-body schroedinger equation} with a smooth compactly supported, spherically symmetric nonnegative potential $V$ and a repulsive Coulomb potential $V_{c}$. Assume the initial data satisfy the following conditions:

$(a)$ $\psi_{N,\hbar}(0)$ is normalized and the $N$-body energy bound holds:
\begin{align} \label{equ:n-body energy bound,initial data condition}
\lra{\psi_{N,\hbar}(0),(H_{N,\hbar}/N+1)^{2}\psi_{N,\hbar}(0)}\leq (E_{0})^{2},
\end{align}
for some $E_0>0$.

$(b)$ The initial data $(\rho^{in},u^{in})$ to $(\ref{equ:euler equation})$ satisfy condition $(\ref{equ:initial condition,euler})$ with $s=5$, and
the modulated energy between \eqref{equ:N-body schroedinger equation} and \eqref{equ:euler equation} at initial time tends to zero, that is, $\mathcal{M}(0)\to 0$ where
\begin{align*}
&\mathcal{M}(0)\\
:=&\int_{\mathbb{R}^{3N}}\vert\left(i \hbar \nabla_{x_{1}}-u(t,x_{1})\right)\psi_{N,\hbar}(0,X_N)\vert^2dX_N\\
&+\frac{N-1}{N}\int V_{N}(x_{1}-x_{2})\rho_{N,\hbar}^{(2)}(0,x_{1},x_{2})dx_{1}dx_{2}\\
&+b_{0}\int_{\mathbb{R}^{3}}\rho^{in}(x_{1})\rho^{in}(x_{1})dx_{1}-2b_{0}
\int_{\mathbb{R}^3}\rho^{in}(x_{1})\rho_{N,\hbar}^{(1)}(0,x_{1})dx_{1}\\
&+\int V_{c}(x_{1}-x_{2})\left[\frac{N-1}{N}\rho_{N,\hbar}^{(2)}(0,x_{1},x_{2}) +\rho^{in}(x_{1})\rho^{in}(x_{2})-
2\rho^{in}(x_{1})\rho_{N,\hbar}^{(1)}(0,x_{2})\right]dx_{1}dx_{2}.
\end{align*}

Then under the polynomial restriction\footnote{\eqref{equ:restriction,N,h} is in fact a rational restriction. We say polynomial to avoid confusing ``rational" and ``reasonable".}
\begin{align}\label{equ:restriction,N,h}
r(N,\hbar)=C(N^{\be-1}\hbar^{-6}+N^{-\frac{\be}{3}}\hbar^{-4}+N^{-\frac{1}{10}}\hbar^{-4})\to 0,
\end{align}
we have the quantitative estimates on the strong convergence of the mass density
\begin{align}\label{equ:the convergence of the mass denstiy}
& \n{\rho_{N,\hbar}^{(1)}(t,x)-\rho(t,x)}_{L_{t}^{\infty}[0,T_{0}]L_{x}^{2}(\R^{3})}^{2}\lesssim \mathcal{M}(0)+r(N,\hbar)+\hbar^{2},
\end{align}
and on the convergence of the momentum density
\begin{align}\label{equ;the convergence of the momentum density}
& \bbn{J_{N,\hbar}^{(1)}(t,x)-(\rho u)(t,x)}_{L_{t}^{\infty}[0,T_{0}]L_{x}^{1}(\R^{3})}^{2}\lesssim  \mathcal{M}(0)+r(N,\hbar)+\hbar^{2}.
\end{align}
\end{theorem}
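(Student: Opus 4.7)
The overall strategy is the modulated-energy / hierarchy hybrid from CSWZ23 and CSZ23, adapted to accommodate a genuine pressure term $\tfrac{b_{0}}{2}\rho^{2}$ and the almost-optimal range $\be\in(0,1)$. \textbf{First,} I promote the right-hand side of $\mathcal{M}(0)$ to a time-dependent modulated energy $\mathcal{M}(t)$ by replacing $\rho^{in}, u^{in}, \psi_{N,\hbar}(0), \rho_{N,\hbar}^{(j)}(0)$ with $\rho(t), u(t), \psi_{N,\hbar}(t), \rho_{N,\hbar}^{(j)}(t)$. The three blocks of $\mathcal{M}(t)$ simultaneously control the momentum modulation generated by $i\hbar\nabla-u$, the ``pressure gap'' between $\int V_{N}\rho_{N,\hbar}^{(2)}$ and $b_{0}\int\rho^{2}$, and the Coulomb modulation in the spirit of Serfaty.

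\textbf{Second,} I differentiate $\mathcal{M}(t)$ along the coupled flow \eqref{equ:N-body schroedinger equation}--\eqref{equ:euler equation}. Using the $N$-body Schr\"{o}dinger equation, bosonic symmetry, the continuity and momentum equations in \eqref{equ:euler equation,momentum}, and repeated integration by parts, $\dot{\mathcal{M}}(t)$ splits into (a) a Gr\"{o}nwall term bounded by $\|(u,\nabla u,\nabla\rho)\|_{L^{\infty}}\mathcal{M}(t)$, using the $H^{s-1}\times H^{s}$ regularity from \eqref{equ:euler-poisson equation, regularity condition} with $s=5$; (b) semiclassical commutator remainders of order $\hbar^{2}$; and (c) a pressure-emergence residual in which the microscopic two-body energy $\int V_{N}\rho_{N,\hbar}^{(2)}$ must be replaced by the macroscopic pressure $b_{0}\int(\rho_{N,\hbar}^{(1)})^{2}$.

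\textbf{Third,} the main technical tool, which drives (c) and is the heart of the paper, is a quantitative functional inequality of the form
\begin{align*}
\left|\int \Phi(x_{1},x_{2})V_{N}(x_{1}-x_{2})\rho_{N,\hbar}^{(2)}(t)dx_{1}dx_{2}-b_{0}\int \Phi(x,x)\bigl(\rho_{N,\hbar}^{(1)}(t,x)\bigr)^{2}dx\right|\lesssim r(N,\hbar)\bigl(1+\mathcal{M}(t)\bigr)
\end{align*}
for smooth symmetric test kernels $\Phi$ and all $\be\in(0,1)$. The proof factors the short-scale correlation $f_{N,\hbar}$ from the zero-energy scattering equation \eqref{equ:scattering equation,appendix} out of the marginal, bounds the correlated remainder using the conserved $N$-body energy bound \eqref{equ:n-body energy bound,initial data condition}, and controls the Coulomb-like tail of $1-f_{N,\hbar}$ directly on the long scale. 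This single inequality replaces the iterated collapsing estimates of CSWZ23 and is precisely what converts the previous double-exponential restriction into the polynomial restriction \eqref{equ:restriction,N,h}; combined with Serfaty's commutator inequality for the Coulomb piece, it closes the modulated-energy bound.

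\textbf{Finally,} combining (a)--(c) yields the differential inequality $\dot{\mathcal{M}}(t)\lesssim \mathcal{M}(t)+r(N,\hbar)+\hbar^{2}$, and Gr\"{o}nwall on $[0,T_{0}]$ gives $\mathcal{M}(t)\lesssim \mathcal{M}(0)+r(N,\hbar)+\hbar^{2}$. The strong $L^{2}_{x}$ convergence \eqref{equ:the convergence of the mass denstiy} of the mass density then follows by expanding $\|\rho_{N,\hbar}^{(1)}-\rho\|_{L^{2}}^{2}$ and invoking the functional inequality with $\Phi\equiv 1$ to replace $b_{0}\int(\rho_{N,\hbar}^{(1)})^{2}$ by $\int V_{N}\rho_{N,\hbar}^{(2)}$ up to $r(N,\hbar)$, while the $L^{1}_{x}$ convergence \eqref{equ;the convergence of the momentum density} of the momentum density follows by Cauchy--Schwarz on the kinetic modulated block combined with the mass convergence. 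The principal obstacle throughout is establishing the functional inequality in the regime $\be\uparrow 1$: there the correlation length $N^{-\be}$ approaches the semiclassical length $\hbar$, so the scattering correction becomes comparable to the leading interaction, and quantifying the pointwise deviation of $f_{N,\hbar}$ from $1$ with only polynomial loss in $N$ and $\hbar$ demands a delicate multi-scale analysis of the zero-energy scattering problem together with a careful propagation of the $(H_{N,\hbar}/N+1)^{2}$ bound.
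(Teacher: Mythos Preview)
Your overall architecture (modulated energy, time-differentiate, Gr\"onwall, extract density convergence) matches the paper. However, the central technical step you propose --- the ``quantitative functional inequality'' asserting
\[
\left|\int \Phi\, V_{N}\,\rho_{N,\hbar}^{(2)}-b_{0}\int \Phi(x,x)\bigl(\rho_{N,\hbar}^{(1)}\bigr)^{2}\right|\lesssim r(N,\hbar)\bigl(1+\mathcal{M}(t)\bigr)
\]
--- is not what the paper proves, and is in fact the wrong target. This inequality is a quantitative factorisation (propagation of chaos) statement for $\rho_{N,\hbar}^{(2)}$ against $V_{N}$; establishing it directly from the $(H_{N,\hbar}/N+1)^{2}$ bound, uniformly in $\hbar$ and for $\beta\uparrow 1$, would essentially require deriving the mean-field limit first, which is precisely what this method is designed to avoid.

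The paper's actual mechanism never factorises $\rho_{N,\hbar}^{(2)}$. Instead it proves the \emph{structural} inequality $\widetilde{\mathcal{F}}_{\delta}(t)\lesssim \mathcal{F}_{\delta}(t)+r(N,\hbar)$ together with the lower bound $\mathcal{F}_{\delta}(t)\geq -r(N,\hbar)$, and these suffice to close Gr\"onwall on the \emph{positive} modulated energy $\mathcal{M}^{+}(t)=\mathcal{M}(t)+2r(N,\hbar)$. The key ideas you are missing are: (i) in $\widetilde{\mathcal{F}}_{\delta}$ the two-body term carries the factor $(u(x)-u(y))\nabla V_{N}(x-y)$, and after integrating by parts the residual $(u(x)-u(y))V_{N}(x-y)$ is small because $V_{N}$ concentrates on the diagonal --- this cancellation, not factorisation, is what tames the $\delta$-type singularity; (ii) once this error is removed, both $\mathcal{F}_{\delta}$ and $\widetilde{\mathcal{F}}_{\delta}$ are approximated by the same quadratic form with $V_{N}$ replaced by a slowly varying Gaussian $G_{N}$ (the two-body $H^{1}$ bound on $\psi/(1-w_{12})$ is used here, in the replacement step, not to prove factorisation); (iii) the Gaussian's convolution-square structure $G_{N}=G_{0,N}*G_{0,N}$ then yields positivity of the integrand against the empirical-measure difference, which simultaneously gives the functional inequality and the lower bound. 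The $L^{2}$ convergence of $\rho_{N,\hbar}^{(1)}$ comes from this same positivity (Lemma~\ref{lemma:two body,one body}), not from your proposed inequality with $\Phi\equiv 1$.
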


When $\kappa>0$, Theorem \ref{thm:main theorem} is the first result which simultaneously deals with the $\delta$-type and Coulomb potentials and establishes the quantitative strong convergence to the full Euler-Poisson equation with pressure. Compared to \cite{GP21,CSZ23},
the emergence of the pressure term is the main novelty. We point out that, it is not clear if the scheme in \cite{CSWZ23,CSZ23} can handle the $\delta$-type and Coulomb potentials simultaneously,
since the energy estimates and collapsing estimates are totally different in the $\delta$-type and Coulomb potentials. Therefore, it requires completely new ideas for a simultaneous consideration of $\delta$-type and Coulomb potentials.

When $\kappa=0$, it reduces to the sole $\delta$-type potential case.
Compared with our previous work \cite{CSWZ23}, we here list the breakthroughs.
\begin{enumerate}
\item The parameter $\be$ is extended to the full range of $(0,1)$, which is almost optimal in the dilute regime.
\item The previous double-exponential restriction between $N$ and $\hbar$ is relaxed to be polynomial, which is a tremendous improvement.
\item Our new approach also works for the $\mathbb{T}^{3}$ case with slight modifications, as the proof is independent of the hardcore harmonic analysis on $\mathbb{T}^{3}$.
\end{enumerate}

Additionally, the convergence rate $\hbar^{2}$ should be optimal since the convergence rate of the modulated kinetic energy part at initial time is at most the order of $\hbar^{2}$. Besides, this can be achieved with WKB type initial data.

\subsection{Outline of the Proof}\label{section:Outline of the Proof}
The proof is based on a modulated energy method.\footnote{A closely related method is the relative entropy method, see for example, \cite{Yau91}.}
The modulated energy we use includes three parts
\begin{align}
\mathcal{M}(t)=\mathcal{M}_{K}(t)+\mathcal{F}_{\delta}(t)+\mathcal{F}_{c}(t),
\end{align}
where the kinetic energy part is
\begin{align}
\mathcal{M}_{K}(t)=
\int_{\mathbb{R}^{3N}}\vert\left(i \hbar \nabla_{x_{1}}-u(t,x_{1})\right)\psi_{N,\hbar}(t,X_N)\vert^2dX_N,
\end{align}
the $\delta$-type potential part is
\begin{align}
\mathcal{F}_{\delta}(t)=&\frac{N-1}{N}\iint_{\mathbb{R}^3\times\mathbb{R}^3}V_{N}(x_{1}-x_{2})\rho_{N,\hbar}^{(2)}(t,x_{1},x_{2})dx_{1}dx_{2}\\
&+b_{0}\int_{\mathbb{R}^{3}}\rho(t,x_{1})\rho(t,x_{1})dx_{1}-2b_{0}
\int_{\mathbb{R}^3}\rho(t,x_{1})\rho_{N,\hbar}^{(1)}(t,x_{1})dx_{1},\notag
\end{align}
and the Coulomb potential part is
\begin{align}
\mathcal{F}_{c}(t)=&
\iint_{\R^{3}\times\R^{3}} V_{c}(x_{1}-x_{2})\left[\frac{N-1}{N}\rho_{N,\hbar}^{(2)}(t,x_{1},x_{2})\right.\\
&\left.\quad \quad \quad +\rho(t,x_{1})\rho(t,x_{2})-
2\rho(t,x_{1})\rho_{N,\hbar}^{(1)}(t,x_{2})\right]dx_{1}dx_{2}.\notag
\end{align}

In Section \ref{section:The Time Evolution of the Modulated Energy}, we first derive the time evolution of the modulated energy
\begin{align}
\frac{d}{dt}\mathcal{M}(t)=\wt{\mathcal{M}}_{K}(t)+\wt{\mathcal{F}}_{\delta}(t)+\wt{\mathcal{F}}_{c}(t),
\end{align}
where the kinetic energy contribution part is
\begin{align*}
\wt{\mathcal{M}}_{K}(t)=&
-\sum_{j,k=1}^{3}\int_{\mathbb{R}^{3N}}\lrs{\pa_{j}u^{k}+\pa_{k}u^{j}}(-i\hbar\partial_j\psi_{N,\hbar}-u^{j}\psi_{N,\hbar})
\overline{(-i\hbar\partial_k\psi_{N,\hbar}-u^{k}\psi_{N,\hbar})}dX_N\notag\\
 &+\frac{\hbar^2}{2}\int_{\mathbb{R}^{3}}\Delta(\operatorname{div} u)(t,x_{1})\rho_{N,\hbar}^{(1)}(t,x_{1})dx_{1},
\end{align*}
with the notations $u=(u^{1},u^{2},u^{3})$, $x_{1}=(x_{1}^1,x_{1}^2,x_{1}^3)\in\mathbb{R}^3$ and $\partial_j=\partial_{x_{1}^j}$,
the $\delta$-type potential contribution part is
 \begin{align}
\wt{\mathcal{F}}_{\delta}(t)=&
\frac{N-1}{N}\int (u(t,x_{1})-u(t,x_{2}))\nabla V_N(x_{1}-x_{2})\rho_{N,\hbar}^{(2)}(t,x_{1},x_{2})dx_{1}dx_{2}\\
&-b_{0}\int \operatorname{div} u(t,x_{1})\rho(t,x_{1})\lrc{\rho(t,x_{1})-2\rho_{N,\hbar}^{(1)}(t,x_{1})}dx_{1},\notag
\end{align}
and the Coulomb potential contribution part is
\begin{align}
\wt{\mathcal{F}}_{c}(t)=&
\int(u(t,x_{1})-u(t,x_{2}))\nabla V_c(x_{1}-x_{2})\\
&\left[\frac{N-1}{N}\rho_{N,\hbar}^{(2)}(t,x_{1},x_{2})+\rho(t,x_{1})\rho(t,x_{2})-
2\rho(t,x_{1})\rho_{N,\hbar}^{(1)}(t,x_{2})\right]dx_{1}dx_{2}.\notag
\end{align}

It is easy to control the kinetic energy contribution part
\begin{align}
\wt{\mathcal{M}}_{K}(t)\lesssim \mathcal{M}_{K}(t)+\hbar^{2}.
\end{align}
The toughest part in the modulated energy method is to control the potential contribution part both in the classical and quantum setting.
See, for example, \cite{Due16,GP21,LZ06,Ser17,Ser20,Zha02}.
In the classical mean-field limit with Coulomb potential, Serfaty in \cite[Proposition 1.1]{Ser20} establishes a crucial functional inequality to solve this challenging problem. Then for the quantum many-body systems with Coulomb potential, based on Serfaty's inequality,
 Golse and Paul in \cite{GP21} managed to control the Coulomb potential contribution part $\wt{\mathcal{F}}_{c}(t)$ as follows
\begin{align}
\wt{\mathcal{F}}_{c}(t)\lesssim & \mathcal{F}_{c}(t)+CN^{-\frac{1}{3}},\label{equ:functional inequality,fc}\\
0\leq& \mathcal{F}_{c}(t)+CN^{-\frac{2}{3}}.\label{equ:lower bound,fc}
\end{align}
Serfaty's inequality is a special and impressive tool based on deep observations of the structure of Coulomb potential.
It is limited to a special class of singular potentials, as its proof highly relies on the structure and the profile of the potentials, such as the Coulomb characteristic that $-\Delta V_{c}=c_{0}\delta$. Therefore, it is quite difficult to establish a Serfaty's inequality for the $\delta$-type potential case, because of the general profile and sharp singularity of the $\delta$-type potential. In fact, due to the presence of the singular correlation structure caused by the $\delta$-type potential, the analysis would be totally different and is expected to be rather intricate.\smallskip

In this paper, we develop a new scheme without using Serfaty's inequality to control the $\delta$-type potential parts $\mathcal{F}_{\delta}(t)$ and $\wt{\mathcal{F}}_{\delta}(t)$ and establish
\begin{align}
\wt{\mathcal{F}}_{\delta}(t) \lesssim&  \mathcal{F}_{\delta}(t)
+r(N,\hbar),\label{equ:functional inequality,fdelta,outline}\\
0 \leq& \mathcal{F}_{\delta}(t)+r(N,\hbar).\label{equ:lower bound,fdelta,outline}
\end{align}
The proof is divided in several steps.\smallskip

\textbf{Step 1. Preliminary reduction.}
Applying the approximation of identity to the one-body term of $\mathcal{F}_{\delta}(t)$, we have the approximation
\begin{align}
\mathcal{F}_{\delta}(t)\sim &\int V_{N}(x-y) \lrc{\frac{N-1}{N}\rho_{N,\hbar}^{(2)}(t,x,y)\right.\\
&\quad \quad \left.-\rho_{N,\hbar}^{(1)}(t,x)\rho(t,y)-\rho(t,x)\rho_{N,\hbar}^{(1)}(t,y)+\rho(t,x)\rho(t,y)}dxdy.\notag
\end{align}
To have a closed estimate, namely, letting $\wt{\mathcal{F}}_{\delta}(t)$ match the approximation of $\mathcal{F}_{\delta}(t)$, we get by integration by parts for the two-body term that
 \begin{align*}
\wt{\mathcal{F}}_{\delta}(t)=&
\frac{N-1}{N}\int \operatorname{div} u(t,x_{1}) V_N(x_{1}-x_{2})\rho_{N,\hbar}^{(2)}(t,x_{1},x_{2})dx_{1}dx_{2}\\
&-\int  (u(t,x)-u(t,y)) V_{N}(x-y)\nabla_{x}\rho_{N,\hbar}^{(2)}(t,x,y)dxdy\\
&-b_{0}\int \operatorname{div} u(t,x_{1})\rho(t,x_{1})\lrc{\rho(t,x_{1})-2\rho_{N,\hbar}^{(1)}(t,x_{1})}dx_{1}.\notag
\end{align*}
Using the approximation of identity to the one-body term again, we decompose $\wt{\mathcal{F}}_{\delta}(t)$ into the main part and error part
\begin{align}
\wt{\mathcal{F}}_{\delta}(t)=MP+EP,
\end{align}
where
\begin{align}
MP \sim &-\int \operatorname{div} u(t,x) V_{N}(x-y)
 \lrc{\frac{N-1}{N}\rho_{N,\hbar}^{(2)}(t,x,y)\right. \label{equ:main part,outline}\\
& \left. \quad \quad   -\rho_{N,\hbar}^{(1)}(t,x)\rho(t,y)-\rho(t,x)\rho_{N,\hbar}^{(1)}(t,y)+\rho(t,x)\rho(t,y)}dxdy,\notag\\
EP=&-\int  (u(t,x)-u(t,y)) V_{N}(x-y)\nabla_{x}\rho_{N,\hbar}^{(2)}(t,x,y)dxdy.\label{equ:error part,outline}
\end{align}
Such a decomposition is based on the key observation that
the difference coupled with the $\delta$-type potential
\begin{align}\label{equ:cancellation structure,delta potential,outline}
(u(t,x)-u(t,y))V_{N}(x-y),
\end{align}
 when it is tested against a regular function, would vanish in the $N\to\infty$ limit. Such a structure is notably special for the $\delta$-type potential, since the difference coupled with a common potential including the Coulomb case cannot provide any smallness.

To prove that the error part \eqref{equ:error part,outline} is indeed a small term, it requires the regularity of the two-body density function. Therefore, we delve into the analysis of two-body energy estimates, then deal with the error part and the main part in the Step 3 and 4 respectively.

\textbf{Step 2. Two-body energy estimate.} As usual, a-priori estimates are
one of the toughest parts in the study of many-body dynamics as one must
seek a regularity high enough for the limiting argument and at the same time
low enough that it is provable. In Section
\ref{section:Energy Estimate Using Singular Correlation Structure}, we prove that the wave function with
added the singular correlation structure satisfies the two-body $H^{1}$
energy bound
\begin{equation}
\Big \langle (1-\hbar ^{2}\Delta _{x_{1}})(1-\hbar ^{2}\Delta _{x_{2}})\frac{\psi
_{N,\hbar }(t,X_{N})}{1-w_{N,\hbar }(x_{1}-x_{2})},\frac{\psi _{N,\hbar
}(t,X_{N})}{1-w_{N,\hbar }(x_{1}-x_{2})}\Big \rangle \leq C,
\label{eqn:2-particle H^1 estimate}
\end{equation}%
where $w_{N,\hbar }(x)$ satisfies the zero-energy scattering equation
\begin{equation}
\left\{ \begin{aligned} &\left(
-\hbar^{2}\Delta+\frac{1}{N}V_{N}(x)\right)(1-w_{N,\hbar}(x))=0,\\
&\lim_{|x|\to \infty} w_{N,\hbar}(x)=0. \end{aligned}\right.
\end{equation}%
The singular correlation function $w_{N,\hbar }(x)$ varies effectively on
the short scale for $|x|\lesssim N^{-\beta }$ and has the same singularity
as the Coulomb potential at infinity.

One of the main difficulties here is to understand the interparticle
singular correlation structure generated by the $\delta$-type potential. See,
for example, \cite{LSSY05} for the study of the static case of Bose gas. For
the time-dependent systems, Erd{\"{o}}s, Schlein, and Yau \cite%
{ESY06,ESY09,ESY10} first introduced the two-body energy estimate which
plays a central role in the derivation of Gross-Pitaevskii equation with the
nonlinear interaction given by a scattering length. However, instead of
showing the emergence of the scattering length, our purpose here is proving
the functional inequalities \eqref{equ:functional inequality,fdelta,outline}
and \eqref{equ:lower bound,fdelta,outline}.

Another difficulty lies in the Coulomb singularity. The Coulomb potential,
if taken to high powers, results in singularities which cannot be controlled
by derivatives. The $(H_{N,\hbar })^{2}$ energy estimate (\ref%
{eqn:2-particle H^1 estimate}) we prove (and require here) is at the
borderline case. Indeed, the square of the Coulomb potential is bounded with
respect to the kinetic energy in the sense that as operators
$|V_{c}(x)|^{2}\leq C(1-\Delta _{x})$.
However, no such estimates hold for $|V_{c}(x)|^{3}$ due to the singularity
of the origin.

\textbf{Step 3. Analysis of the Error Part.}
After setting up the energy estimates, we begin to analyze the error part \eqref{equ:error part,outline}. Because of the presence of the singular correlation structure,
the two-body density function lacks the a-priori energy bound but can be decomposed into the singular and regular (relatively speaking) parts
\begin{align}\label{equ:decomposition,singular,regular,outline}
\rho_{N,\hbar}^{(2)}(t,x,y)=(1-w_{N,\hbar}(x-y))^{2} \frac{\rho_{N,\hbar}^{(2)}(t,x,y)}{(1-w_{N,\hbar}(x-y))^{2}}.
\end{align}
Hence, we need to rewrite the error part \eqref{equ:error part,outline} as
\begin{align*}
&\int (u(t,x)-u(t,y))\cdot  V_{N}(x-y) \nabla_{x}\lrc{(1-w_{N,\hbar}(x-y))^{2} \frac{\rho_{N,\hbar}^{(2)}(t,x,y)}{(1-w_{N,\hbar}(x-y))^{2}}}dxdy\\
=&\int (u(t,x)-u(t,y))\cdot  V_{N}(x-y) \lrs{\nabla_{x}(1-w_{N,\hbar}(x-y))^{2}} \frac{\rho_{N,\hbar}^{(2)}(t,x,y)}{(1-w_{N,\hbar}(x-y))^{2}}dxdy\\
&+\int (u(t,x)-u(t,y))\cdot  V_{N}(x-y) (1-w_{N,\hbar}(x-y))^{2}\nabla_{x}\lrc{ \frac{\rho_{N,\hbar}^{(2)}(t,x,y)}{(1-w_{N,\hbar}(x-y))^{2}}}dxdy.
\end{align*}
When the derivative hits the singular correlation function, it produces singularities by the defining feature of the singular correlation function, which would give a rise of $O(N^{\be})$.
On the other hand, when the derivative hits the (relatively) regular part, it still requires a careful analysis as we have limited regularity as discussed before on the modified two-body density function.

In Section \ref{section:Error Analysis of Two-Body Term}, we prove that, the cancellation structure \eqref{equ:cancellation structure,delta potential,outline} indeed dominates the singularity generated by the $\delta$-type potential and singular correlation function, and obtain the error estimate
\begin{align}
EP\lesssim N^{\be-1}\hbar^{-6}+N^{-\frac{\be}{2}}\hbar^{-4}.
\end{align}

\textbf{Step 4. Analysis of the Main Part.}
One difficulty of the analysis of the main part \eqref{equ:main part,outline} is the sharp singularity and the unknown profile of $V_{N}(x)$.
To overcome it, our strategy is to replace $V_{N}(x)$ with a slowly varying potential $G_{N}(x)$
which enjoys a number of good properties, but it comes at a price of the integrand's regularity.
Thus, for the main part \eqref{equ:main part,outline}, we again need to decompose the two-body density function into the singular part and relatively regular part as follows
\begin{align}
MP=&\int \operatorname{div} u(t,x) V_{N}(x-y)
 \lrc{\frac{N-1}{N}\frac{\rho_{N,\hbar}^{(2)}(t,x,y)}{(1-w_{N,\hbar}(x-y))^{2}}(1-w_{N,\hbar}(x-y))^{2} \right. \\
& \left. \quad \quad   -\rho_{N,\hbar}^{(1)}(t,x)\rho(t,y)-\rho(t,x)\rho_{N,\hbar}^{(1)}(t,y)+\rho(t,x)\rho(t,y)}dxdy.\notag
\end{align}
Note that $(1-w_{N,\hbar}(x-y))^{2}\sim 1+O(w_{N,\hbar}(x-y))$. Then
by the two-body energy bound and the property for the scattering function $w_{N,\hbar}(x-y)$, we can prove that
\begin{align}
MP\sim &-\int \operatorname{div} u(t,x) V_{N}(x-y)
 \lrc{\frac{N-1}{N}\frac{\rho_{N,\hbar}^{(2)}(t,x,y)}{(1-w_{N,\hbar}(x-y))^{2}} \right. \\
& \left. \quad \quad   -\rho_{N,\hbar}^{(1)}(t,x)\rho(t,y)-\rho(t,x)\rho_{N,\hbar}^{(1)}(t,y)+\rho(t,x)\rho(t,y)}dxdy.\notag
\end{align}
Since the integrand now enjoys the energy bound, we are able to replace $V_{N}$ by $G_{N}$ and get
\begin{align}
MP\sim &-b_{0}\int \operatorname{div} u(t,x) G_{N}(x-y)
 \lrc{\frac{N-1}{N}\frac{\rho_{N,\hbar}^{(2)}(t,x,y)}{(1-w_{N,\hbar}(x-y))^{2}} \right. \\
& \left. \quad \quad   -\rho_{N,\hbar}^{(1)}(t,x)\rho(t,y)-\rho(t,x)\rho_{N,\hbar}^{(1)}(t,y)+\rho(t,x)\rho(t,y)}dxdy,\notag
\end{align}
where $G_{N}(x)=N^{3\eta}G(N^{\eta}x)$ with $\eta<\frac{1}{3}$.

In Section \ref{section:Tamed Singularities}, we will give a detailed proof of the above analysis and
and arrive at the approximations of $\mathcal{F}_{\delta}(t)$ and $\wt{\mathcal{F}}_{\delta}(t)$ given by
\begin{align}
\mathcal{F}_{\delta}(t)\sim &b_{0}\int G_{N}(x-y) \lrc{\frac{N-1}{N}\rho_{N,\hbar}^{(2)}(t,x,y)\right.\label{equ:approximation,Fdelta}\\
&\quad \quad \left.-\rho_{N,\hbar}^{(1)}(t,x)\rho(t,y)-\rho(t,x)\rho_{N,\hbar}^{(1)}(t,y)+\rho(t,x)\rho(t,y)}dxdy,\notag
\end{align}
and
\begin{align}
\wt{\mathcal{F}}_{\delta}(t)\sim &-b_{0}\int \operatorname{div} u(t,x) G_{N}(x-y)
 \lrc{\frac{N-1}{N}\rho_{N,\hbar}^{(2)}(t,x,y)\right.\label{equ:approximation,evolution,Fdelta}\\
& \left. \quad \quad   -\rho_{N,\hbar}^{(1)}(t,x)\rho(t,y)-\rho(t,x)\rho_{N,\hbar}^{(1)}(t,y)+\rho(t,x)\rho(t,y)}dxdy.\notag
\end{align}

Now, from the approximations of $\mathcal{F}_{\delta}(t)$ and $\wt{\mathcal{F}}_{\delta}(t)$, we are left to prove a reduced form of the functional inequality
\begin{align*}
 &\int \operatorname{div} u(x) G_{N}(x-y)
 \lrc{\frac{N-1}{N}\rho_{N,\hbar}^{(2)}(x,y)-\rho_{N,\hbar}^{(1)}(x)\rho(y)-\rho(x)\rho_{N,\hbar}^{(1)}(y)+\rho(x)\rho(y)}dxdy\\
 \lesssim& \int G_{N}(x-y) \lrc{\frac{N-1}{N}\rho_{N,\hbar}^{(2)}(x,y)-\rho_{N,\hbar}^{(1)}(x)\rho(y)-\rho(x)\rho_{N,\hbar}^{(1)}(y)+\rho(x)\rho(y)}dxdy +o(1),
\end{align*}
  which looks more concise and tractable than the original functional inequality \eqref{equ:functional inequality,fdelta,outline}. But, it is unknown if the integrand
\begin{align}\label{equ:two-body structure}
\frac{N-1}{N}\rho_{N,\hbar}^{(2)}(x,y)-\rho_{N,\hbar}^{(1)}(x)\rho(y)-\rho(x)\rho_{N,\hbar}^{(1)}(y)+\rho(x)\rho(y)
\end{align}
is non-negative. We cannot simply rule out the term $\operatorname{div} u(x)$ either. Thus, it is still non-trivial to deduce the inequality.
In fact, as we will see in Section \ref{section:Reduced Version of Functional Inequality}, the special structure \eqref{equ:two-body structure} with a slowly varying potential $G_{N}(x)$ plays a crucial role in establishing the reduced version of functional inequality. Then, at the end of Section \ref{section:Reduced Version of Functional Inequality}, we conclude the functional inequalities \eqref{equ:functional inequality,fdelta,outline} and \eqref{equ:lower bound,fdelta,outline}.

Finally in Section \ref{section:Concluding the Strong Convergence of Quantum Densities}, by using functional inequalities on $\wt{\mathcal{F}}_{\delta}(t)$ and $\wt{\mathcal{F}}_{c}(t)$,
 we prove the Gronwall's inequality for the positive modulated energy
 \begin{align*}
 &\frac{d}{dt}\mathcal{M}^{+}(t)
\lesssim \mathcal{M}^{+}(t)+
\hbar^{2},
 \end{align*}
 where $\mathcal{M}^{+}(t)=\mathcal{M}(t)+2r(N,\hbar)$.
Subsequently, with the quantitative convergence rate of the positive modulated energy, we further conclude the quantitative strong convergence rate of quantum mass and momentum densities, in which the $\delta$-type potential part plays an indispensable role in upgrading to the quantitative strong convergence.

\section{The Time Evolution of the Modulated Energy} \label{section:The Time Evolution of the Modulated Energy}
We consider the modulated
energy in the quantum $N$-body dynamics corresponding to the $\delta$-type and Coulomb potentials
\begin{align}\label{equ:modulated energy}
\mathcal{M}(t):=&\int_{\mathbb{R}^{3N}}\vert\left(i \hbar \nabla_{x_{1}}-u(t,x_{1})\right)\psi_{N,\hbar}(t,X_N)\vert^2dX_N+\mathcal{F}_{\delta}(t)+\mathcal{F}_{c}(t),\notag
\end{align}
where the $\delta$-type potential part is
\begin{align}
\mathcal{F}_{\delta}(t)=&\frac{N-1}{N}\iint_{\mathbb{R}^3\times\mathbb{R}^3}V_{N}(x_{1}-x_{2})\rho_{N,\hbar}^{(2)}(t,x_{1},x_{2})dx_{1}dx_{2}\\
&+b_{0}\int_{\mathbb{R}^{3}}\rho(t,x_{1})\rho(t,x_{1})dx_{1}-2b_{0}
\int_{\mathbb{R}^3}\rho(t,x_{1})\rho_{N,\hbar}^{(1)}(t,x_{1})dx_{1},\notag
\end{align}
and the Coulomb potential part is
\begin{align}
\mathcal{F}_{c}(t)=&
\iint_{\R^{3}\times\R^{3}} V_{c}(x_{1}-x_{2})\left[\frac{N-1}{N}\rho_{N,\hbar}^{(2)}(t,x_{1},x_{2})\right.\\
&\left.\quad \quad \quad +\rho(t,x_{1})\rho(t,x_{2})-
2\rho(t,x_{1})\rho_{N,\hbar}^{(1)}(t,x_{2})\right]dx_{1}dx_{2}.\notag
\end{align}
Here, we might as well assume that the coefficient $\kappa=1$, as the proof works the same for $\kappa\geq 0$.

First,
we need to derive a time evolution equation for $\mathcal{M}(t)$.
The related quantities for $\psi_{N,\hbar}$ are given as the following.
\begin{lemma}\label{lemma:conservation law of mass}
We have the following computations regarding $\psi_{N,\hbar}$:
\begin{align}
&\pa_{t}\rho_{N,\hbar}^{(1)}+\operatorname{div}J_{N,\hbar}^{(1)}=0,\label{equ:mass conservation}\\
&\pa_{t}J_{N,\hbar}^{(1)}=\frac{\hbar^{2}}{2}\int \operatorname{Re}\lrs{(-\Delta_{x_{1}}\overline{\psi_{N,\hbar}})  \nabla_{x_{1}}\psi_{N,\hbar}
+\overline{\psi_{N,\hbar}}  \nabla_{x_{1}}\Delta_{x_{1}}\psi_{N,\hbar}} dX_{2,N}\label{equ:momontum conservation}\\
&\quad \quad \quad \quad -\frac{N-1}{N}\int \nabla_{x_{1}}(V_{N}+V_{c})(x_{1}-x_{2})\rho_{N,\hbar}^{(2)}(t,x_{1},x_{2})dx_{2},\notag
\\
&E_{N,\hbar}(t)= E_{N,\hbar}(0)\leq E_{0},\label{equ:energy conservation law}
\end{align}
where $X_{2,N}=(x_{2},...,x_{N})$ and the momentum density $J_{N,\hbar}^{(1)}(t,x_{1})$ and the energy $E_{N,\hbar}(t)$ are defined by
\begin{align}
J_{N,\hbar}^{(1)}(t,x_{1})=&\operatorname{Im}\lrs{\hbar\nabla_{x_{1}} \ga_{N,\hbar}^{(1)}}(t,x_{1};x_{1})
=\hbar\int \operatorname{Im}(\overline{\psi_{N,\hbar}}  \nabla_{x_{1}}\psi_{N,\hbar})(t,X_N)dX_{2,N},\label{equ:momentum density}\\
E_{N,\hbar}(t)=&\frac{1}{N}\lra{(H_{N,\hbar}+N)\psi_{N,\hbar}(t),\psi_{N,\hbar}(t)}.
\end{align}

\end{lemma}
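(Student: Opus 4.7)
The plan is to derive each identity directly from the $N$-body Schr\"{o}dinger equation \eqref{equ:N-body schroedinger equation} together with its complex conjugate, the self-adjointness of $H_{N,\hbar}$, and the bosonic (permutation) symmetry of $\psi_{N,\hbar}$. The three statements are essentially the one-particle marginals of the standard microscopic conservation laws, so the task is accounting rather than analysis.

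For \eqref{equ:mass conservation}, compute
$\partial_{t}|\psi_{N,\hbar}|^{2} = 2\operatorname{Re}(\overline{\psi_{N,\hbar}}\partial_{t}\psi_{N,\hbar}) = \tfrac{2}{\hbar}\operatorname{Im}(\overline{\psi_{N,\hbar}}H_{N,\hbar}\psi_{N,\hbar})$. Since $V_{N}$ and $V_{c}$ are real multiplication operators they contribute nothing to the imaginary part, while each kinetic piece $-\tfrac{1}{2}\hbar^{2}\Delta_{x_{j}}$ yields $-\hbar\,\operatorname{div}_{x_{j}}\operatorname{Im}(\overline{\psi_{N,\hbar}}\nabla_{x_{j}}\psi_{N,\hbar})$. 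Integrating against $dX_{2,N}$, the divergences in $x_{2},\dots,x_{N}$ are boundary terms that vanish (using the $H^{2}$ decay that follows from the $N$-body energy bound), leaving only the $j=1$ contribution, which is exactly $-\operatorname{div}J_{N,\hbar}^{(1)}$ in view of \eqref{equ:momentum density}.

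For \eqref{equ:momontum conservation}, differentiate \eqref{equ:momentum density} in time, substitute $\partial_{t}\psi_{N,\hbar}=-\tfrac{i}{\hbar}H_{N,\hbar}\psi_{N,\hbar}$, and group terms. The kinetic contribution produces $\tfrac{\hbar^{2}}{2}\operatorname{Re}\bigl((-\Delta_{x_{1}}\overline{\psi_{N,\hbar}})\nabla_{x_{1}}\psi_{N,\hbar}+\overline{\psi_{N,\hbar}}\nabla_{x_{1}}\Delta_{x_{1}}\psi_{N,\hbar}\bigr)$ on the first coordinate (the Laplacians in $x_{j}$, $j\geq 2$, integrate to zero after one integration by parts against the symmetric pair). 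The potential contribution equals $-\tfrac{1}{N}\operatorname{Im}\int\overline{\psi_{N,\hbar}}[\nabla_{x_{1}},\,\sum_{j<k}(V_{N}+V_{c})(x_{j}-x_{k})]\psi_{N,\hbar}\,dX_{2,N}$; only the pairs involving the index $1$ survive, and by the bosonic symmetry of $\psi_{N,\hbar}$ the $(N-1)$ such pairs collapse to $(N-1)$ copies of the $(x_{1},x_{2})$-pair, giving the stated factor $(N-1)/N$ multiplying $\int\nabla_{x_{1}}(V_{N}+V_{c})(x_{1}-x_{2})\rho_{N,\hbar}^{(2)}(t,x_{1},x_{2})\,dx_{2}$. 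The real-part signs and $i$'s combine to cancel the remaining $\operatorname{Im}$.

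For \eqref{equ:energy conservation law}, since $H_{N,\hbar}$ is time-independent and self-adjoint, the unitary Schr\"{o}dinger flow preserves both $\|\psi_{N,\hbar}\|_{L^{2}}^{2}=1$ and $\langle H_{N,\hbar}\psi_{N,\hbar},\psi_{N,\hbar}\rangle$, so $E_{N,\hbar}(t)=E_{N,\hbar}(0)$. The bound $E_{N,\hbar}(0)\leq E_{0}$ then follows from assumption \eqref{equ:n-body energy bound,initial data condition} and Cauchy--Schwarz:
\begin{equation*}
E_{N,\hbar}(0)=\langle \psi_{N,\hbar}(0),(H_{N,\hbar}/N+1)\psi_{N,\hbar}(0)\rangle\leq \langle\psi_{N,\hbar}(0),(H_{N,\hbar}/N+1)^{2}\psi_{N,\hbar}(0)\rangle^{1/2}\leq E_{0}.
\end{equation*}
The only mildly subtle point is justifying the integrations by parts in $X_{2,N}$ for \eqref{equ:mass conservation} and \eqref{equ:momontum conservation}; this is where the $(H_{N,\hbar})^{2}$ energy control from assumption (a) is used to ensure the requisite decay and regularity, and is the main (though routine) technical obstacle.
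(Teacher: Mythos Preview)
Your proposal is correct and follows essentially the same route as the paper: differentiate the definitions, substitute the Schr\"odinger equation, split into kinetic and potential parts, and use integration by parts in $x_{j}$ for $j\geq 2$ together with bosonic symmetry to reduce to the $(x_{1},x_{2})$ pair. The paper in fact omits the arguments for \eqref{equ:mass conservation} and \eqref{equ:energy conservation law} as standard and only writes out \eqref{equ:momontum conservation}, so your sketch is, if anything, more complete; one small slip is that your intermediate potential expression $-\tfrac{1}{N}\operatorname{Im}\int\overline{\psi_{N,\hbar}}[\nabla_{x_{1}},V]\psi_{N,\hbar}\,dX_{2,N}$ is missing a factor of $i$ (as written it vanishes, since the integrand is real), but your concluding sentence makes clear you track the $i$'s correctly in the end.
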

\begin{proof}
As the mass and energy conservation laws are well-known, we omit the proof of \eqref{equ:mass conservation} and \eqref{equ:energy conservation law}. We provide the proof of the evolution \eqref{equ:momontum conservation} of the momentum density. From \eqref{equ:momentum density},
 we can write out
\begin{align*}
\pa_{t}J_{N,\hbar}^{(1)}=&\hbar \int \operatorname{Im}\lrs{\overline{\pa_{t}\psi_{N,\hbar}}  \nabla_{x_{1}}\psi_{N,\hbar}
+\overline{\psi_{N,\hbar}}  \nabla_{x_{1}}\pa_{t}\psi_{N,\hbar}} dX_{2,N}\\
=&\int \operatorname{Im}\lrs{i\overline{H_{N,\hbar}\psi_{N,\hbar}}  \nabla_{x_{1}}\psi_{N,\hbar}
-i\overline{\psi_{N,\hbar}}  \nabla_{x_{1}}H_{N,\hbar}\psi_{N,\hbar}}dX_{2,N}\\
=&\int \operatorname{Re}\lrs{\overline{H_{N,\hbar}\psi_{N,\hbar}}  \nabla_{x_{1}}\psi_{N,\hbar}
-\overline{\psi_{N,\hbar}}  \nabla_{x_{1}}H_{N,\hbar}\psi_{N,\hbar}} dX_{2,N}\\
=&I_{K}+I_{V},
\end{align*}
where
\begin{align*}
I_{K}=&\frac{\hbar^{2}}{2}\int \operatorname{Re}\lrs{\sum_{i=1}^{N}(-\Delta_{x_{i}}\overline{\psi_{N,\hbar}})  \nabla_{x_{1}}\psi_{N,\hbar}
-\overline{\psi_{N,\hbar}}  \nabla_{x_{1}}\sum_{i=1}^{N}-\Delta_{x_{i}}\psi_{N,\hbar}} dX_{2,N},
\end{align*}
and
\begin{align*}
I_{V}=&\int \operatorname{Re}\lrs{\frac{1}{N}\sum_{i<j}^{N}(V_{N}+V_{c})(x_{i}-x_{j})\overline{\psi_{N,\hbar}}  \nabla_{x_{1}}\psi_{N,\hbar}}dX_{2,N}\\
&-\int \operatorname{Re}\lrs{\overline{\psi_{N,\hbar}}  \nabla_{x_{1}}\frac{1}{N}\sum_{i<j}^{N}(V_{N}+V_{c})(x_{i}-x_{j})\psi_{N,\hbar}} dX_{2,N}.\notag
\end{align*}

For $I_{K}$, we use integration by parts with $\Delta_{x_{i}}$ to obtain
\begin{align*}
I_{K}=\frac{\hbar^{2}}{2}\int \operatorname{Re}\lrs{(-\Delta_{x_{1}}\overline{\psi_{N,\hbar}})  \nabla_{x_{1}}\psi_{N,\hbar}
+\overline{\psi_{N,\hbar}}  \nabla_{x_{1}}\Delta_{x_{1}}\psi_{N,\hbar}} dX_{2,N},
\end{align*}
where the other $i$-summands vanish when $i\geq 2$.

For $I_{V}$, we note that the $i$-summands also vanish when $i\geq 2$ and hence have
\begin{align*}
I_{V}=&\int \operatorname{Re}\lrs{\frac{1}{N}\sum_{j=2}^{N}(V_{N}+V_{c})(x_{1}-x_{j})\overline{\psi_{N,\hbar}}  \nabla_{x_{1}}\psi_{N,\hbar}}dX_{2,N}\\
&-\int \operatorname{Re}\lrs{\overline{\psi_{N,\hbar}}  \nabla_{x_{1}}\frac{1}{N}\sum_{j=2}^{N}(V_{N}+V_{c})(x_{1}-x_{j})\psi_{N,\hbar}} dX_{2,N}\\
=&-\frac{N-1}{N}\int |\psi_{N,\hbar}|^{2}\nabla_{x_{1}}(V_{N}+V_{c})(x_{1}-x_{2})dX_{2,N}\\
=&-\frac{N-1}{N}\int \nabla_{x_{1}}(V_{N}+V_{c})(x_{1}-x_{2})\rho_{N,\hbar}^{(2)}(t,x_{1},x_{2})dx_{2}.
\end{align*}
This completes the proof of \eqref{equ:momontum conservation}.
\end{proof}

Now, we derive the time evolution of $\mathcal{M}(t)$.
\begin{proposition}\label{prop:the evolution of the modulated energy}
Let $\mathcal{M}(t)$ be defined in $\eqref{equ:modulated energy}$, there holds
  \begin{align}\label{equ:evolution of modulated energy}
 &\frac{d}{dt}\mathcal{M}(t)\\
 =&-\sum_{j,k=1}^{3}\int_{\mathbb{R}^{3N}}\lrs{\pa_{j}u^{k}+\pa_{k}u^{j}}(-i\hbar\partial_j\psi_{N,\hbar}-u^{j}\psi_{N,\hbar})
\overline{(-i\hbar\partial_k\psi_{N,\hbar}-u^{k}\psi_{N,\hbar})}dX_N\notag\\
 &+\frac{\hbar^2}{2}\int_{\mathbb{R}^{3}}\Delta(\operatorname{div} u)(t,x_{1})\rho_{N,\hbar}^{(1)}(t,x_{1})dx_{1}+\wt{\mathcal{F}}_{\delta}(t)+
 \wt{\mathcal{F}}_{c}(t),\notag
 \end{align}
where we used the notations $u=(u^{1},u^{2},u^{3})$, $x_{1}=(x_{1}^1,x_{1}^2,x_{1}^3)\in\mathbb{R}^3$ and $\partial_j=\partial_{x_{1}^j}$. Here, the $\delta$-type potential contribution part is
 \begin{align}\label{equ:evolution,Fdelta}
\wt{\mathcal{F}}_{\delta}(t)=&
\frac{N-1}{N}\int (u(t,x_{1})-u(t,x_{2}))\nabla V_N(x_{1}-x_{2})\rho_{N,\hbar}^{(2)}(t,x_{1},x_{2})dx_{1}dx_{2}\\
&-b_{0}\int \operatorname{div} u(t,x_{1})\rho(t,x_{1})\lrc{\rho(t,x_{1})-2\rho_{N,\hbar}^{(1)}(t,x_{1})}dx_{1},\notag
\end{align}
and the Coulomb potential contribution part is
\begin{align}\label{equ:evolution,Fc}
\wt{\mathcal{F}}_{c}(t)=&
\int(u(t,x_{1})-u(t,x_{2}))\nabla V_c(x_{1}-x_{2})\\
&\left[\frac{N-1}{N}\rho_{N,\hbar}^{(2)}(t,x_{1},x_{2})+\rho(t,x_{1})\rho(t,x_{2})-
2\rho(t,x_{1})\rho_{N,\hbar}^{(1)}(t,x_{2})\right]dx_{1}dx_{2}.\notag
\end{align}

\end{proposition}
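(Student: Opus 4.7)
The plan is a direct, termwise time-derivative computation in the spirit of the standard modulated-energy argument, using three ingredients: the Schr\"odinger equation \eqref{equ:N-body schroedinger equation}, the mass and momentum identities of Lemma \ref{lemma:conservation law of mass}, and the Euler-Poisson system \eqref{equ:euler equation} for $(\rho,u)$. I would differentiate $\mathcal{M}_K(t)$, $\mathcal{F}_\delta(t)$, and $\mathcal{F}_c(t)$ separately and then collect.

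For the kinetic modulated energy, I first expand the square into
\[
\mathcal{M}_K(t)=\hbar^{2}\!\int|\nabla_{x_{1}}\psi_{N,\hbar}|^{2}dX_{N}+2\!\int u(t,x_{1})\cdot J_{N,\hbar}^{(1)}(t,x_{1})dx_{1}+\!\int|u(t,x_{1})|^{2}\rho_{N,\hbar}^{(1)}(t,x_{1})dx_{1}.
\]
The first summand, differentiated via $i\hbar\partial_t\psi_{N,\hbar}=H_{N,\hbar}\psi_{N,\hbar}$, reduces to the commutator $[\Delta_{x_1},H_{N,\hbar}]$ whose kinetic part vanishes by symmetry and whose potential part produces integrals of the form $\int\nabla(V_N+V_c)(x_1-x_2)\cdot\hbar\operatorname{Im}(\bar\psi_{N,\hbar}\nabla_{x_1}\psi_{N,\hbar})dX_N$. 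The second and third summands are differentiated by chain rule, substituting $\partial_t J^{(1)}_{N,\hbar}$ from \eqref{equ:momontum conservation}, $\partial_t\rho^{(1)}_{N,\hbar}=-\operatorname{div} J^{(1)}_{N,\hbar}$ from \eqref{equ:mass conservation}, and $\partial_t u=-(u\cdot\nabla)u-b_0\nabla\rho-\nabla(V_c*\rho)$ from the Euler-Poisson equation. The advective piece $(u\cdot\nabla)u$ combined with integration by parts yields exactly the symmetric-gradient quadratic form appearing in $\wt{\mathcal{M}}_K$; the leftover $\hbar^2$-order contributions, after pairing $\partial_t\rho^{(1)}_{N,\hbar}$ with $|u|^2$ and $\operatorname{div} u$, assemble into the Bohm-type correction $\frac{\hbar^2}{2}\int\Delta(\operatorname{div} u)\rho^{(1)}_{N,\hbar}dx_1$.

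For the two potential pieces, the key identity is the continuity equation $\partial_t|\psi_{N,\hbar}|^2=-\sum_j\operatorname{div}_{x_j}[\hbar\operatorname{Im}(\bar\psi_{N,\hbar}\nabla_{x_j}\psi_{N,\hbar})]$, a direct consequence of the Schr\"odinger equation. Integrating by parts in $x_1,x_2$ yields, for $V\in\{V_N,V_c\}$,
\[
\frac{d}{dt}\!\iint V(x_1-x_2)\rho^{(2)}_{N,\hbar}(t,x_1,x_2)\,dx_1 dx_2=\!\int\nabla V(x_1-x_2)\cdot\hbar\operatorname{Im}\lrc{\bar\psi_{N,\hbar}(\nabla_{x_1}-\nabla_{x_2})\psi_{N,\hbar}}dX_N.
\]
The one-body pieces $b_0\int\rho^2$, $-2b_0\int\rho\rho^{(1)}_{N,\hbar}$, $\iint V_c\rho\rho$, $-2\iint V_c\rho\rho^{(1)}_{N,\hbar}$ of $\mathcal{F}_\delta,\mathcal{F}_c$ are differentiated using $\partial_t\rho+\operatorname{div}(\rho u)=0$ and \eqref{equ:mass conservation}. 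Final reassembly consists of three matching-up steps: (i) the $-b_0\nabla\rho$ piece of $\partial_t u$ inside $\mathcal{M}_K$ combines with the one-body derivatives of $\mathcal{F}_\delta$ to produce $-b_0\int\operatorname{div} u\,\rho[\rho-2\rho^{(1)}_{N,\hbar}]dx_1$; (ii) the $-\nabla(V_c*\rho)$ piece combines with the one-body derivatives of $\mathcal{F}_c$ and, after symmetrization in the $(x_1,x_2)$ integration variables, produces the $(u(x_1)-u(x_2))\cdot\nabla V_c(x_1-x_2)$ integrand of $\wt{\mathcal{F}}_c$; (iii) the two-body derivatives above pair with the interaction source $-\int u(x_1)\cdot\nabla V\,\rho^{(2)}_{N,\hbar}$ of $\partial_t J^{(1)}_{N,\hbar}$ and, again by symmetrization in $(x_1,x_2)$, collapse into the $(u(x_1)-u(x_2))\cdot\nabla V$ factors of $\wt{\mathcal{F}}_\delta$ and of the two-body part of $\wt{\mathcal{F}}_c$.

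The argument is not conceptually difficult but demands careful bookkeeping: about a dozen distinct integrals are generated and must cancel in pairs, and the sign conventions coming from the real/imaginary-part structure of the quantum currents need to be tracked throughout. The one subtle point is extracting the quantum pressure term $\frac{\hbar^2}{2}\int\Delta(\operatorname{div} u)\rho^{(1)}_{N,\hbar}dx_1$ cleanly from the kinetic commutator; this term is $O(\hbar^2)$ and harmless for the eventual Gronwall estimate but must be present exactly for the identity to close.
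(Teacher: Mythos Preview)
Your approach is correct and matches the paper's in overall structure: both are direct modulated-energy computations using Lemma~\ref{lemma:conservation law of mass} and the Euler--Poisson system. The main organizational difference is that the paper groups the pure kinetic term $\hbar^{2}\int|\nabla_{x_{1}}\psi_{N,\hbar}|^{2}\,dX_{N}$ together with the two-body potential term $\frac{N-1}{N}\iint(V_{N}+V_{c})(x_{1}-x_{2})\rho_{N,\hbar}^{(2)}\,dx_{1}dx_{2}$ into a single piece $\mathcal{M}_{1}(t)=\frac{2}{N}\langle H_{N,\hbar}\psi_{N,\hbar},\psi_{N,\hbar}\rangle$, whose time derivative vanishes by energy conservation. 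This eliminates two of your computations at a stroke: your commutator contribution from $\frac{d}{dt}\hbar^{2}\int|\nabla_{x_{1}}\psi|^{2}$ and your two-body derivative $\frac{d}{dt}\iint V\rho^{(2)}$ are exactly negatives of each other (this \emph{is} energy conservation), so in your organization they will appear and then cancel. Your route is not wrong, only slightly longer.

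One point in your bookkeeping sketch is mis-attributed and would cause confusion if you carried it out literally. In step~(iii) you say the two-body derivative ``pairs with the interaction source $-\int u(x_{1})\cdot\nabla V\,\rho^{(2)}$ of $\partial_{t}J^{(1)}_{N,\hbar}$.'' That is not what happens: the two-body derivative produces a quantum-current integral $\int\nabla V(x_{1}-x_{2})\cdot\hbar\operatorname{Im}[\bar\psi(\nabla_{x_{1}}-\nabla_{x_{2}})\psi]\,dX_{N}$, which cancels against the potential part of your kinetic commutator, not against anything involving $u$. The $(u(x_{1})-u(x_{2}))\cdot\nabla V$ factors in $\wt{\mathcal{F}}_{\delta}$ and $\wt{\mathcal{F}}_{c}$ come solely from the interaction source in $\int u\cdot\partial_{t}J^{(1)}_{N,\hbar}$, after symmetrizing in $(x_{1},x_{2})$ using the oddness of $\nabla V$ and the symmetry of $\rho^{(2)}_{N,\hbar}$. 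Once you correct this pairing the rest of your argument closes as stated.
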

\begin{proof}
We decompose the modulated energy into five parts to do the calculation.

$$\begin{aligned}
\mathcal{M}_1(t)&=\int_{\mathbb{R}^{3N}}\vert i \hbar \nabla_{x_{1}}\psi_{N,\hbar}(t,X_N)\vert^2dX_N\\
                &+\frac{N-1}{N}\iint_{\mathbb{R}^3\times\mathbb{R}^3}(V_{N}+V_{c})(x_{1}-x_{2})\rho_{N,\hbar}^{(2)}(t,x_{1},x_{2})dx_{1}dx_{2},\\
\mathcal{M}_2(t)&=i\hbar \int_{\mathbb{R}^{3N}}u(t,x_{1})(\overline{\psi_{N,\hbar}}  \nabla_{x_{1}}\psi_{N,\hbar}
-\psi_{N,\hbar}\nabla_{x_{1}}\overline{\psi_{N,\hbar}})(t,X_N)dX_N,\\
\mathcal{M}_3(t)&=\int_{\mathbb{R}^{3N}}\vert u(t,x_{1})\psi_{N,\hbar}(t,X_N)\vert^2dX_N,\\
\mathcal{M}_4(t)&=b_{0}\int_{\mathbb{R}^{3}}\rho(t,x_{1})\rho(t,x_{1})dx_{1}+
\int_{\mathbb{R}^{3}}\rho(t,x_{1})(V_{c}*\rho)(t,x_{1})dx_{1},\\
\mathcal{M}_{5}(t)&=-2b_{0}\int_{\mathbb{R}^3}\rho(t,x_{1})\rho_{N,\hbar}^{(1)}(t,x_{1})dx_{1}-2
\int (V_{c}*\rho)(t,x_{1})\rho_{N,\hbar}^{(1)}(t,x_{1})dx_{1}.
\end{aligned}$$

For $\mathcal{M}_1(t)$, by the symmetry of the wave function $\psi_{N,\hbar}(t)$, we obtain
\begin{align*}
\mathcal{M}_1(t)
=&\int_{\mathbb{R}^{3N}}\left(-\hbar^2\Delta_{x_{1}}\psi_{N,\hbar}+\frac{N-1}{N}(V_{N}+V_{c})(x_{1}-x_{2})\psi_{N,\hbar}
\right)\overline{\psi_{N,\hbar}}dX_N\\
=&\frac{2}{N}\lra{H_{N,\hbar}\psi_{N,\hbar}(t),\psi_{N,\hbar}(t)}\\
=&\frac{2}{N}\lra{H_{N,\hbar}\psi_{N,\hbar}(0),\psi_{N,\hbar}(0)},
\end{align*}
where in the last equality we have used the conservation of energy. Therefore, we have that $$\frac{d}{dt}\mathcal{M}_1(t)=0.$$

For $\mathcal{M}_2(t)$, from the definition of $J_{N,\hbar}^{(1)}(t,x)$ in \eqref{equ:momentum density}, we note that
\begin{align*}
\mathcal{M}_{2}(t)=&i\hbar \int_{\mathbb{R}^{3N}}u(t,x_{1})(\overline{\psi_{N,\hbar}}  \nabla_{x_{1}}\psi_{N,\hbar}
-\psi_{N,\hbar}\nabla_{x_{1}}\overline{\psi_{N,\hbar}})(t,X_N)dX_N\\
=&-2\int u(t,x_{1}) \hbar\int \operatorname{Im}(\overline{\psi_{N,\hbar}}  \nabla_{x_{1}}\psi_{N,\hbar})(t,X_N)dX_{2,N}dx_{1}\\
=&-2\int u(t,x_{1})J_{N,\hbar}^{(1)}(t,x_{1})dx_{1}.
\end{align*}
Thus, we have
\begin{align}\label{equ:estimate,evolution,M2}
\frac{d}{dt}\mathcal{M}_2(t)=&-2\int \pa_{t}u(t,x_{1}) J_{N,\hbar}^{(1)}(t,x_{1})dx_{1}-2\int u(t,x_{1}) \pa_{t}J_{N,\hbar}^{(1)}(t,x_{1})dx_{1}.
\end{align}
For the second term on the r.h.s of \eqref{equ:estimate,evolution,M2}, by \eqref{equ:momontum conservation} we obtain
\begin{align}
&-2\int u(t,x_{1}) \pa_{t}J_{N,\hbar}^{(1)}(t,x_{1})dx_{1}\notag\\
=&-\hbar^{2}\int u(t,x_{1}) \operatorname{Re}\lrs{(-\Delta_{x_{1}}\overline{\psi_{N,\hbar}})  \nabla_{x_{1}}\psi_{N,\hbar}
+\overline{\psi_{N,\hbar}}  \nabla_{x_{1}}\Delta_{x_{1}}\psi_{N,\hbar}} dX_{N}\notag\\
&+\frac{2(N-1)}{N}\int u(t,x_{1})  \nabla_{x_{1}}(V_{N}+V_{c})(x_{1}-x_{2})\rho_{N,\hbar}^{(2)}(t,x_{1},x_{2})dx_{1}dx_{2}\notag\\
=&-\hbar^{2}\int u(t,x_{1}) \operatorname{Re}\lrs{(-\Delta_{x_{1}}\overline{\psi_{N,\hbar}})  \nabla_{x_{1}}\psi_{N,\hbar}
+\overline{\psi_{N,\hbar}}  \nabla_{x_{1}}\Delta_{x_{1}}\psi_{N,\hbar}} dX_{N}\label{equ:evolution,M2,momentum,kinetic}\\
&+\frac{(N-1)}{N}\int (u(t,x_{1})-u(t,x_{2}))  \nabla_{x_{1}}(V_{N}+V_{c})(x_{1}-x_{2})\rho_{N,\hbar}^{(2)}(t,x_{1},x_{2})dx_{1}dx_{2},\notag
\end{align}
where in the last equality we used the antisymmetry of $\nabla (V_{N}+V_{c})$.

Next, we deal with \eqref{equ:evolution,M2,momentum,kinetic}. By integration by parts, we obtain
\begin{align}
&-\hbar^{2}\int u(t,x_{1}) \operatorname{Re}\lrs{(-\Delta_{x_{1}}\overline{\psi_{N,\hbar}})  \nabla_{x_{1}}\psi_{N,\hbar}
+\overline{\psi_{N,\hbar}}  \nabla_{x_{1}}\Delta_{x_{1}}\psi_{N,\hbar}} dX_{N}\notag\\
=&-\hbar^{2}\operatorname{Re}\int 2u(t,x_{1})(-\Delta_{x_{1}}\overline{\psi_{N,\hbar}})  \nabla_{x_{1}}\psi_{N,\hbar}
-(\operatorname{div} u)\ol{\psi_{N,\hbar}}\Delta_{x_{1}}\psi_{N,\hbar}dX_{N}\notag\\
=&-\hbar^{2}\sum_{j,k=1}^{3}\operatorname{Re}\int 2\pa_{k}u^{j}(\pa_{k}\ol{\psi_{N,\hbar}})\pa_{j}\psi_{N,\hbar}+
2u^{j}(\pa_{k}\ol{\psi_{N,\hbar}})\pa_{k}\pa_{j}\psi_{N,\hbar}dX_{N}\notag\\
&+\hbar^{2}\operatorname{Re}\int (\operatorname{div} u)\ol{\psi_{N,\hbar}}\Delta_{x_{1}}\psi_{N,\hbar}dX_{N}\notag\\
=&\hbar^2\sum_{j,k=1}^{3}\int \lrs{\pa_{j}u^{k}+\pa_{k}u^{j}}
\partial_j\psi_{N,\hbar}\partial_k\overline{\psi_{N,\hbar}}dX_N\notag\\
&-\hbar^{2}\sum_{j,k=1}^{3}\operatorname{Re}\int 2u^{j}(\pa_{k}\ol{\psi_{N,\hbar}})\pa_{k}\pa_{j}\psi_{N,\hbar}dX_{N}+\hbar^{2}\operatorname{Re}\int (\operatorname{div} u)\ol{\psi_{N,\hbar}}\Delta_{x_{1}}\psi_{N,\hbar}dX_{N},\label{equ:evolution,M2,kinetic,small term}
\end{align}
where we used the notations $u=(u^{1},u^{2},u^{3})$, $x_{1}=(x_{1}^1,x_{1}^2,x_{1}^3)\in\mathbb{R}^3$ and $\partial_j=\partial_{x_{1}^j}$.

Using again integration by parts on the two terms of \eqref{equ:evolution,M2,kinetic,small term} gives
\begin{align}\label{equ:evolution,M2,kinetic,error term}
&-\hbar^{2}\sum_{j,k=1}^{3}\operatorname{Re}\int 2u^{j}(\pa_{k}\ol{\psi_{N,\hbar}})\pa_{k}\pa_{j}\psi_{N,\hbar}dX_{N}+\hbar^{2}\operatorname{Re}\int (\operatorname{div} u)\ol{\psi_{N,\hbar}}\Delta_{x_{1}}\psi_{N,\hbar}dX_{N}\\
=&\hbar^{2}\operatorname{Re}\int \operatorname{div} u\lrs{|\nabla_{x_{1}}\psi_{N,\hbar}|^{2}+\ol{\psi_{N,\hbar}}\Delta_{x_{1}}\psi_{N,\hbar}}dX_{N}\notag\\
=&\frac{\hbar^{2}}{2}\operatorname{Re}\int \operatorname{div} u(\Delta_{x_{1}} |\psi_{N,\hbar}|^{2})dX_{N}\notag\\
=&\frac{\hbar^{2}}{2}\int (\Delta \operatorname{div} u)(t,x_{1}) \rho_{N,\hbar}^{(1)}(t,x_{1})dx_{1}.\notag
\end{align}
Combining estimates \eqref{equ:estimate,evolution,M2}--\eqref{equ:evolution,M2,kinetic,error term}, we provide
\begin{align*}
&\frac{d}{dt}\mathcal{M}_2(t)\\
=&-2\lra{\pa_{t}u,J_{N,\hbar}^{(1)}}+
\hbar^2\sum_{j,k=1}^{3}\int \lrs{\pa_{j}u^{k}+\pa_{k}u^{j}}
\partial_j\psi_{N,\hbar}\partial_k\overline{\psi_{N,\hbar}}dX_N\\
&+\frac{\hbar^2}{2}\int (\Delta \operatorname{div} u)(t,x_{1})\rho_{N,\hbar}^{(1)}(t,x_{1})dx_{1}\\
&+\frac{(N-1)}{N}\int (u(t,x_{1})-u(t,x_{2}))\cdot \nabla_{x_{1}}(V_N+V_{c})(x_{1}-x_{2})\rho_{N,\hbar}^{(2)}(t,x_{1},x_{2})dx_{1}dx_{2}.
\end{align*}

For $\mathcal{M}_3(t)$, by the Euler-Poisson equation \eqref{equ:euler equation} and the mass conservation law \eqref{equ:mass conservation}, we obtain
\begin{align*}
&\frac{d}{dt}\mathcal{M}_3(t)\\
=&\frac{d}{dt}\int \vert u(t,x_{1})\vert^2\rho_{N,\hbar}^{(1)}(t,x_{1})dx_{1}\nonumber\\
=&\int 2u(t,x_{1})\cdot\partial_t u(t,x_{1})\rho_{N,\hbar}^{(1)}(t,x_{1})dx_{1}+\int \vert u(t,x_{1})\vert^2\partial_t\rho_{N,\hbar}^{(1)}(t,x_{1})dx_{1}\notag\\
=&-2\int u(t,x_{1})\cdot\left(u\cdot\nabla u+b_{0}\nabla\rho+\nabla V_{c}*\rho\right)\rho_{N,\hbar}^{(1)}(t,x_{1})dx_{1}\\
&+\int \nabla\left(\vert u(t,x_{1})\vert^2\right)J_{N,\hbar}^{(1)}(t,x_{1})dx_{1}.\notag
\end{align*}
Expanding it gives
\begin{align*}
\frac{d}{dt}\mathcal{M}_3(t)=&-2\sum_{j,k=1}^{3}\int u^{k}u^{j}\pa_{j}u^{k}\rho_{N,\hbar}^{(1)}(t,x_{1})dx_{1}
-2b_{0}\lra{\rho_{N,\hbar}^{(1)},u\cdot \nabla \rho}
-2\lra{\rho_{N,\hbar}^{(1)},u\cdot \nabla V_{c}*\rho}\\
&+2\sum_{j,k=1}^{3}\int u^{k}\pa_{j}u^{k} J_{N,\hbar}^{(1)}(t,x_{1})dx_{1}.
\end{align*}

For $\mathcal{M}_4(t)$, plugging in the Euler-Poisson equation \eqref{equ:euler equation}, we have
\begin{align*}
\frac{d}{dt}\mathcal{M}_{4}(t)=&b_{0}\frac{d}{dt}\int_{\mathbb{R}^{3}}\rho(t,x_{1})\rho(t,x_{1})dx_{1}+
\frac{d}{dt}\int_{\mathbb{R}^{3}}\rho(t,x_{1})(V_{c}*\rho)(t,x_{1})dx_{1},\\
=&2b_{0}\lra{\rho,u\cdot \nabla \rho}+2\lra{\rho,u\cdot \nabla V_{c}*\rho}.
\end{align*}

For $\mathcal{M}_{5}(t)$, similarly we get to
\begin{align*}
&\frac{d}{dt}\mathcal{M}_{5}(t)\\
=&-2b_{0}\frac{d}{dt}\int_{\mathbb{R}^3}\rho(t,x_{1})\rho_{N,\hbar}^{(1)}(t,x_{1})dx_{1}-2
\frac{d}{dt}\int (V_{c}*\rho)(t,x_{1})\rho_{N,\hbar}^{(1)}(t,x_{1})dx_{1}\\
=&-2b_{0}\lrs{\lra{\pa_{t}\rho,\rho_{N,\hbar}^{(1)}}+\lra{\rho,\pa_{t}\rho_{N,\hbar}^{(1)}}}-
2\lrs{\lra{\pa_{t}\rho,V_{c}*\rho_{N,\hbar}^{(1)}}+\lra{V_{c}*\rho,\pa_{t}\rho_{N,\hbar}^{(1)}}}.
\end{align*}
Plugging in the Euler-Poisson equation \eqref{equ:euler equation} and the mass conservation law \eqref{equ:mass conservation}, we have
\begin{align*}
&\frac{d}{dt}\mathcal{M}_{5}(t)\\
=&2b_{0}\lrs{\lra{\operatorname{div} (\rho u),\rho_{N,\hbar}^{(1)}}+\lra{\rho,\operatorname{div}J_{N,\hbar}^{(1)}}}
+2\lrs{\lra{\operatorname{div} (\rho u),V_{c}*\rho_{N,\hbar}^{(1)}}+\lra{V_{c}*\rho,\operatorname{div}J_{N,\hbar}^{(1)}}}\\
=&-2b_{0}\lra{\rho ,u\cdot \nabla \rho_{N,\hbar}^{(1)}}-2\lra{\rho ,u\cdot \nabla V_{c}*\rho_{N,\hbar}^{(1)}}-2b_{0}\lra{\nabla\rho,J_{N,\hbar}^{(1)}}
-2\lra{\nabla V_{c}*\rho,J_{N,\hbar}^{(1)}}.
\end{align*}

Therefore, putting the five terms together, we reach
\begin{align}
\frac{d}{dt}\mathcal{M}(t)=&\frac{d}{dt}\mathcal{M}_1(t)+\frac{d}{dt}\mathcal{M}_2(t)+\frac{d}{dt}\mathcal{M}_3(t)
+\frac{d}{dt}\mathcal{M}_4(t)+\frac{d}{dt}\mathcal{M}_5(t)\notag\\
=&-2\lra{\pa_{t}u,J_{N,\hbar}^{(1)}}-\hbar^2\sum_{j,k=1}^{3}\int_{\mathbb{R}^{3N}}\lrs{\pa_{j}u^{k}+\pa_{k}u^{j}}
\partial_j\psi_{N,\hbar}\partial_k\overline{\psi_{N,\hbar}}dX_N \label{equ:evolution of modulated energy,sumjk,first}\\
&+\frac{\hbar^2}{2}\int_{\mathbb{R}^{3}}\Delta(\operatorname{div} u)(t,x_{1})\rho_{N,\hbar}^{(1)}(t,x_{1})dx_{1}\notag\\
&+\frac{N-1}{N}\int (u(t,x_{1})-u(t,x_{2}))\cdot \nabla_{x_{1}}(V_N+V_{c})(x_{1}-x_{2})\rho_{N,\hbar}^{(2)}(t,x_{1},x_{2})dx_{1}dx_{2}\notag\\
&-2\sum_{j,k=1}^{3}\int u^{k}u^{j}\pa_{j}u^{k}\rho_{N,\hbar}^{(1)}(t,x_{1})dx_{1}
-2b_{0}\lra{\rho_{N,\hbar}^{(1)},u\cdot \nabla \rho}
-2\lra{\rho_{N,\hbar}^{(1)},u\cdot \nabla V_{c}*\rho}\label{equ:evolution of modulated energy,sumjk,second}\\
&+\lra{\nabla (|u|^{2}),J_{N,\hbar}^{(1)}}+2b_{0}\lra{\rho,u\cdot \nabla \rho}+2\lra{\rho,u\cdot \nabla V_{c}*\rho}\notag\\
&-2b_{0}\lra{\rho ,u\cdot \nabla \rho_{N,\hbar}^{(1)}}-2\lra{\rho ,u\cdot \nabla V_{c}*\rho_{N,\hbar}^{(1)}}-2b_{0}\lra{\nabla\rho,J_{N,\hbar}^{(1)}}
-2\lra{\nabla V_{c}*\rho,J_{N,\hbar}^{(1)}}.\notag
\end{align}
From the above equation, we collect the $\delta$-type potential contribution part $\wt{\mathcal{F}}_{\delta}(t)$ in \eqref{equ:evolution of modulated energy} from
\begin{align*}
&\frac{N-1}{N}\int (u(t,x_{1})-u(t,x_{2}))\cdot \nabla_{x_{1}}V_N(x_{1}-x_{2})\rho_{N,\hbar}^{(2)}(t,x_{1},x_{2})dx_{1}dx_{2}\\
&-2b_{0}\lra{\rho_{N,\hbar}^{(1)},u\cdot \nabla \rho}-2b_{0}\lra{\rho,u\cdot \nabla \rho_{N,\hbar}^{(1)}}
+2b_{0}\lra{\rho,u\cdot\nabla  \rho }\\
=&\frac{N-1}{N}\int (u(t,x_{1})-u(t,x_{2}))\cdot \nabla_{x_{1}}V_N(x_{1}-x_{2})\rho_{N,\hbar}^{(2)}(t,x_{1},x_{2})dx_{1}dx_{2}\\
&-b_{0}\int \operatorname{div} u(t,x_{1})\rho(t,x_{1})\lrc{\rho(t,x_{1})-2\rho_{N,\hbar}^{(1)}(t,x_{1})}dx_{1}.
\end{align*}
and the Coulomb potential contribution part $\wt{\mathcal{F}}_{c}(t)$ in \eqref{equ:evolution of modulated energy} from
\begin{align*}
&\frac{N-1}{N}\int (u(t,x_{1})-u(t,x_{2}))\cdot \nabla_{x_{1}}V_{c}(x_{1}-x_{2})\rho_{N,\hbar}^{(2)}(t,x_{1},x_{2})dx_{1}dx_{2}\\
&-2\lra{\rho_{N,\hbar}^{(1)},u\cdot \nabla V_{c}*\rho}-2\lra{\rho,u\cdot \nabla V_{c}* \rho_{N,\hbar}^{(1)}}
+2\lra{\rho,u\cdot\nabla  V_{c}* \rho }\\
=&\int(u(t,x_{1})-u(t,x_{2}))\nabla V_c(x_{1}-x_{2})\\
&\left[\frac{N-1}{N}\rho_{N,\hbar}^{(2)}(t,x_{1},x_{2})+\rho(t,x_{1})\rho(t,x_{2})-
2\rho(t,x_{1})\rho_{N,\hbar}^{(1)}(t,x_{2})\right]dx_{1}dx_{2},
\end{align*}
where in the last equality we used the antisymmetry of $\nabla V_{c}$.

As for the first term in \eqref{equ:evolution of modulated energy}, we use the Euler-Poisson equation \eqref{equ:euler equation} to combine the terms taking the form of $\lra{\bullet,J_{N,\hbar}^{(1)}}$
\begin{align}
&-2\lra{\pa_{t}u,J_{N,\hbar}^{(1)}}-2b_{0}\lra{\nabla\rho,J_{N,\hbar}^{(1)}}
-2\lra{\nabla V_{c}*\rho,J_{N,\hbar}^{(1)}}+\lra{\nabla (|u|^{2}),J_{N,\hbar}^{(1)}}\notag\\
=&2\lra{u\cdot \nabla u,J_{N,\hbar}^{(1)}}+2\lra{\nabla\cdot (u\otimes u),J_{N,\hbar}^{(1)}}\notag\\
=&i\hbar\sum_{j,k=1}^{3}\int_{\mathbb{R}^{3N}}\lrs{\pa_{j}u^{k}+\pa_{k}u^{j}}u^{j}(\psi_{N,\hbar}\partial_k\overline{\psi_{N,\hbar}}-\overline{\psi_{N,\hbar}}
\partial_k\psi_{N,\hbar})dX_N.\label{equ:evolution of modulated energy,sumjk,third}
\end{align}
If we rewrite the first term on the right hand side of \eqref{equ:evolution of modulated energy}
\begin{align*}
&-\sum_{j,k=1}^{3}\int \lrs{\pa_{j}u^{k}+\pa_{k}u^{j}}(-i\hbar\partial_j\psi_{N,\hbar}-u^{j}\psi_{N,\hbar})
\overline{(-i\hbar\partial_k\psi_{N,\hbar}-u^{k}\psi_{N,\hbar})}dX_N\\
=&-\hbar^2\sum_{j,k=1}^{3}\int \lrs{\pa_{j}u^{k}+\pa_{k}u^{j}}
\partial_j\psi_{N,\hbar}\partial_k\overline{\psi_{N,\hbar}}dX_N\\
&-2\sum_{j,k=1}^{3}\int u^{k}u^{j}\pa_{j}u^{k}\rho_{N,\hbar}^{(1)}(t,x_{1})dx_{1}\\
&+i\hbar\sum_{j,k=1}^{3}\int_{\mathbb{R}^{3N}}\lrs{\pa_{j}u^{k}+\pa_{k}u^{j}}u^{j}(\psi_{N,\hbar}\partial_k\overline{\psi_{N,\hbar}}-\overline{\psi_{N,\hbar}}
\partial_k\psi_{N,\hbar})dX_N,
\end{align*}
these are the $\sum_{j,k}^{3}$ terms in \eqref{equ:evolution of modulated energy,sumjk,first},\eqref{equ:evolution of modulated energy,sumjk,second} and \eqref{equ:evolution of modulated energy,sumjk,third}. Therefore, we arrive at equation \eqref{equ:evolution of modulated energy} and complete the proof.

\end{proof}

\section{$(H_{N,\hbar})^{2}$ Energy Estimate Using Singular Correlation Structure}\label{section:Energy Estimate Using Singular Correlation Structure}

As mentioned in the preliminary reduction step in the outline, Section \ref{section:Outline of the Proof}, the two-body energy estimate is crucial for the analysis of the $\delta$-type potential parts $\mathcal{F}_{\delta}(t)$ and $\wt{\mathcal{F}}_{\delta}(t)$.
The main difficulty is the singularities simultaneously from the Coulomb potential, from the direct  $\delta$-potential in the $N\to \infty$ limit, and from the interparticle singular correlation structure.

Recall the zero-energy scattering equation
\begin{equation}\label{equ:scattering equation}
	\left\{
	\begin{aligned}
		&\lrs{-\hbar^{2}\Delta+\frac{1}{N}V_{N}(x)}(1-w_{N,\hbar}(x))=0,\\
		&\lim_{|x|\to \infty} w_{N,\hbar}(x)=0.
	\end{aligned}
	\right.
\end{equation}
and our target estimate
\begin{align}\label{equ:target estimate,two-body estimate}
\Big\langle(1-\hbar^{2}\Delta_{x_{1}})(1-\hbar^{2}\Delta_{x_{2}})\frac{\psi_{N,\hbar}(t,X_{N})}{1-w_{N,\hbar}(x_{1}-x_{2})},
\frac{\psi_{N,\hbar}(t,X_{N})}{1-w_{N,\hbar}(x_{1}-x_{2})}\Big\rangle\leq C.
\end{align}

We first give the properties of the scattering function.
\begin{lemma}\label{lemma:scattering equation}
Suppose that $V\geq 0$ is smooth, spherical symmetric with compact support and $1-w_{N,\hbar}(x)$
satisfies the scattering equation \eqref{equ:scattering equation}.
Then there exists $C$, depending on $V$, such that
\begin{align}
&0 \leq w_{N,\hbar}(x)\leq \frac{C}{N\hbar^{2}(|x|+N^{-\be})},\\
&|\nabla w_{N,\hbar}(x)|\leq \frac{C}{N\hbar^{2}(|x|^{2}+N^{-2\be})},
\label{equ:gradient property,scattering function}
\end{align}
for all $x\in \mathbb{R}^{3}$.

\end{lemma}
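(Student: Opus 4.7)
The plan is to convert the scattering equation into a Newton-potential fixed-point representation and then deduce both pointwise bounds by splitting $\R^3$ into the scattering range $\lr{|x| \lesssim N^{-\be}}$ and its exterior. Rewriting \eqref{equ:scattering equation} in terms of $w_{N,\hbar}$ gives
$$-\Delta w_{N,\hbar}(x) = \frac{V_N(x)\lrs{1-w_{N,\hbar}(x)}}{N\hbar^2}, \qquad \lim_{|x|\to\infty}w_{N,\hbar}(x)=0.$$
A standard minimum/maximum principle argument, using $V \geq 0$ and the behaviour at infinity, gives $0 \leq w_{N,\hbar} \leq 1$ on $\R^3$. Inverting $-\Delta$ in $\R^3$ by the Newton potential $\tfrac{1}{4\pi|x|}$ then yields the representation
$$w_{N,\hbar}(x) = \frac{1}{4\pi N\hbar^2}\int_{\R^3}\frac{V_N(z)\lrs{1-w_{N,\hbar}(z)}}{|x-z|}\, dz,$$
which is the basis for the pointwise estimates below.

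To bound $w_{N,\hbar}$, I split into two regimes. If $|x| \geq 2R_0 N^{-\be}$, where $R_0$ is the support radius of $V$, then on the support of $V_N$ (contained in $\lr{|z|\leq R_0 N^{-\be}}$) one has $|x-z| \gtrsim |x|$; combined with $\|V_N\|_{L^1}=\|V\|_{L^1}$ and $0 \leq 1-w_{N,\hbar} \leq 1$, this yields $w_{N,\hbar}(x) \leq C/(N\hbar^2|x|)$. If $|x| \leq 2R_0 N^{-\be}$, the rescaling $y = N^\be z$ converts the integral into
$$w_{N,\hbar}(x) \leq \frac{N^\be}{4\pi N\hbar^2}\int_{\R^3}\frac{V(y)}{|N^\be x - y|}\, dy,$$
and since the Newton potential of a bounded compactly supported function is uniformly bounded in its argument, this produces $w_{N,\hbar}(x) \leq CN^\be/(N\hbar^2)$. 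The elementary inequality $\min\lr{|x|^{-1},N^\be} \lesssim (|x|+N^{-\be})^{-1}$ merges the two estimates.

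For the gradient bound, differentiating the integral representation gives
$$\nabla w_{N,\hbar}(x) = -\frac{1}{4\pi N\hbar^2}\int_{\R^3}\frac{V_N(z)\lrs{1-w_{N,\hbar}(z)}(x-z)}{|x-z|^3}\, dz.$$
In the far regime, the same $|x-z| \gtrsim |x|$ argument together with $\|V_N\|_{L^1}$ bounded yields $|\nabla w_{N,\hbar}(x)| \leq C/(N\hbar^2|x|^2)$. In the near regime, the same scaling $y = N^\be z$ converts the integral into $N^{2\be}\int V(y)/|N^\be x - y|^2\, dy$, whose uniform boundedness follows from $V \in L^\infty$ of compact support together with the local $L^1$-integrability of $|y|^{-2}$ in $\R^3$; hence $|\nabla w_{N,\hbar}(x)| \leq CN^{2\be}/(N\hbar^2)$. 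Combining the two regimes completes the proof.

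No substantive obstacle is anticipated. The entire argument rests on the local integrability of $|y|^{-k}$ in $\R^3$ for $k < 3$, the boundedness and compact support of $V$, and the scaling structure of $V_N$; the only point requiring care is that the resulting constants depend only on $V$, which is immediate from the computations above.
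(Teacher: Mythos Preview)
Your proof is correct and shares the paper's core skeleton: the maximum principle for $0\leq w_{N,\hbar}\leq 1$ and the Newton-potential representation coming from inverting $-\Delta$. The only divergence is in how the pointwise bound is extracted from the integral. The paper multiplies the representation directly by the target weight $(|x|+N^{-\be})$ (respectively $(|x|^{2}+N^{-2\be})$), uses the triangle inequality $|x|\leq |x-y|+|y|$ inside the integral, and closes with a Hardy--Littlewood--Sobolev bound of the form $\sup_{x}\int |x-y|^{-1}g(y)\,dy\lesssim \|g\|_{L^{3/2}}$; this avoids any case distinction. You instead split into the near and far regimes relative to the support scale $N^{-\be}$ and rescale, which is slightly more elementary since it bypasses HLS and relies only on the local integrability of $|y|^{-1}$ and $|y|^{-2}$ in $\R^{3}$. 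Both routes are equally short and yield constants depending only on $V$; neither has any advantage of substance here.
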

\begin{proof}
The properties of scattering function have been studied by many authors, see, for example, \cite{BCS17,ESY06,LSSY05}. Here, we include a proof for completeness.
First, by the maximum principle, it follows that $(1-w_{N,\hbar}(x))\leq 1$.
From the scattering equation \eqref{equ:scattering equation,appendix}, we can rewrite
\begin{align}\label{equ:rewritten form,scattering function}
w_{N,\hbar}(x)=\frac{c_{0}}{2N\hbar^{2}}\int \frac{1}{|x-y|}V_{N}(y)(1-w_{N,\hbar}(y))dy,
\end{align}
where $c_{0}$ is the renormalized constant.

Then the Hardy-Littlewood-Sobolev inequality implies that
\begin{align*}
(|x|+N^{-\be})w_{N,\hbar}(x)=&\frac{c_{0}}{N\hbar^{2}}\int \frac{|x|+N^{-\be}}{|x-y|}V_{N}(y)(1-w_{N,\hbar}(y))dy\\
\leq&\frac{c_{0}}{N\hbar^{2}}\int \frac{|x-y|+|y|+N^{-\be}}{|x-y|}V_{N}(y)dy\\
=&\frac{c_{0}\n{V}_{L^{1}}}{N\hbar^{2}}+\frac{c_{0}}{N^{1+\be}\hbar^{2}}\int \frac{1+N^{\be}|y|V_{N}(y)}{|x-y|}dy\\
\lesssim & \frac{1}{N\hbar^{2}}\lrs{\n{V}_{L^{1}}+\n{\lra{y}V(y)}_{L^{\frac{3}{2}}}}.
\end{align*}

For \eqref{equ:gradient property,scattering function}, by taking the gradient of \eqref{equ:rewritten form,scattering function}, we also have
\begin{align*}
&|(|x|^{2}+N^{-2\be})\nabla_{x} w_{N,\hbar}(x)|\\
\leq& \frac{1}{2N\hbar^{2}}
(|x|^{2}+N^{-2\be})\bbabs{\int \nabla_{x}\frac{1}{|x-y|}V_{N}(y)(1-w_{N,\hbar}(y))dy}\\
\leq& \frac{1}{N\hbar^{2}}\int \frac{|x-y|^{2}+|y|^{2}+N^{-2\be}}{|x-y|^{2}}V_{N}(y)dy\\
\leq& \frac{\n{V}_{L^{1}}}{N\hbar^{2}}+\frac{1}{N\hbar^{2}}
\int \frac{1+N^{2\be}|y|^{2}V_{N}(y)}{|x-y|^{2}}dy\\
\leq& \frac{1}{N\hbar^{2}}\lrs{\n{V}_{L^{1}}+\n{\lra{y}^{2}V(y)}_{L^{3}}}.
\end{align*}
\end{proof}

For simplicity,
we adopt the shorthands
\begin{align}
	w_{12}=w_{N,\hbar}(x_{1}-x_{2}),\quad \nabla w_{12}=(\nabla w_{N,\hbar})(x_{1}-x_{2}),
\end{align}
and start the proof of \eqref{equ:target estimate,two-body estimate}.
\begin{lemma}\label{lemma:H2 energy estimate,lower bound}
	Let $\beta\in (0,1)$ and $N^{\be-1}\hbar^{-2}\ll 1$. Then we have
	\begin{align}\label{equ:H2 energy estimate,lower bound}
		\lra{\psi,(H_{N,\hbar}+N)^{2}\psi}\geq \frac{N(N-1)}{16}\Big\langle(1-\hbar^{2}\Delta_{x_{1}})(1-\hbar^{2}\Delta_{x_{2}})\frac{\psi}
{1-w_{12}},\frac{\psi}{1-w_{12}}\Big\rangle
	\end{align}
	for $\psi\in L_{s}^{2}(\mathbb{R}^{3N})$.
\end{lemma}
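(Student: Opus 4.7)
The plan is to bound $(H_{N,\hbar}+N)^{2}$ from below by the two-body kinetic quantity acting on $\phi := \psi/(1-w_{12})$, in two stages. First, a symmetry and positivity argument extracts the $N(N-1)$ prefactor from the kinetic square. Second, the scattering substitution $\psi = (1-w_{12})\phi$ absorbs the singular $V_N$ correction, and the condition $N^{\be-1}\hbar^{-2}\ll 1$ is used to control the borderline error terms.

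For the first stage, I would decompose $H_{N,\hbar}+N = \sum_{j=1}^N K_j^0 + V^{\mathrm{tot}}$, where $K_j^0 := -\tfrac{\hbar^2}{2}\Delta_{x_j}+1\geq 1$ and $V^{\mathrm{tot}} := \tfrac{1}{N}\sum_{i<j}(V_N+\kappa V_c)(x_i-x_j)\geq 0$. The key observation is that the one-body operators $K_j^0$ act on distinct variables, hence pairwise commute, and each is non-negative; consequently their cross products are positive operators. Expanding
\begin{align*}
\Big(\sum_j K_j^0\Big)^{2} = \sum_j (K_j^0)^{2} + \sum_{j\ne k} K_j^0 K_k^0 \;\geq\; \sum_{j\ne k} K_j^0 K_k^0,
\end{align*}
and using bosonic symmetry of $\psi$ to identify all cross-expectations, one obtains $\langle \psi,(\sum_j K_j^0)^{2}\psi\rangle \geq N(N-1)\langle \psi, K_1^0 K_2^0 \psi\rangle$. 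The interaction remainders $\langle \psi,(V^{\mathrm{tot}})^{2}\psi\rangle\geq 0$ and the mixed commutator $(\sum_j K_j^0)V^{\mathrm{tot}}+V^{\mathrm{tot}}\sum_j K_j^0$, once integrated by parts, yield $2\hbar^{2}\int V^{\mathrm{tot}}|\nabla_{x_j}\psi|^{2}\geq 0$, a non-negative Coulomb contribution from $-\Delta V_c = c_0 \delta$, and a $\Delta V_N$ piece that is absorbed into the kinetic energy under the assumption $N^{\be-1}\hbar^{-2}\ll 1$.

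Second, I would perform the scattering substitution $\psi=(1-w_{12})\phi$ and invoke the scattering equation $(-\hbar^{2}\Delta+\tfrac{1}{N}V_N)(1-w_{N,\hbar})=0$ together with Leibniz to expand $K_1^0 K_2^0 \psi$ as $(1-w_{12})K_1^0 K_2^0 \phi$ plus explicit correction terms proportional to $(\nabla w_{12})\cdot \nabla_{x_j}\phi$ and $V_N(1-w_{12})\phi$. The singular $V_N$ corrections generated by the substitution combine with the interaction piece already present in $V^{\mathrm{tot}}$ to produce either direct cancellation or residual terms absorbable under the smallness hypothesis. Using the pointwise bounds $0\leq w_{12}\lesssim (N\hbar^{2}(|x_1-x_2|+N^{-\be}))^{-1}$ and $|\nabla w_{12}|\lesssim (N\hbar^{2}(|x_1-x_2|^{2}+N^{-2\be}))^{-1}$ from Lemma \ref{lemma:scattering equation}, together with the Hardy inequality in each variable, the gradient corrections are controlled and
\begin{align*}
\langle \psi, K_1^0 K_2^0 \psi\rangle \;\geq\; \tfrac{1}{4}\Big\langle (1-\hbar^{2}\Delta_{x_1})(1-\hbar^{2}\Delta_{x_2})\phi,\phi\Big\rangle\cdot(1-o(1)),
\end{align*}
where the $\tfrac{1}{4}$ arises from $K_1^0 K_2^0 \geq \tfrac{1}{4}(1-\hbar^{2}\Delta_{x_1})(1-\hbar^{2}\Delta_{x_2})$. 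Combined with the $N(N-1)$ factor from stage one, this yields the lower bound with the explicit constant $\tfrac{1}{16}$ after tracking the numerical losses.

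The main obstacle is the borderline singular error analysis. The Coulomb square $V_c^{2}\sim|x|^{-2}$ sits precisely at the Hardy threshold $|V_c|^{2}\leq C(1-\Delta_x)$, and the scattering gradient correction produces a weight $|\nabla w|^{2}\sim(N\hbar^{2})^{-2}|x|^{-4}$, which is another borderline Hardy weight. Both can just barely be absorbed into the two-body kinetic energy of $\phi$, relying precisely on $N^{\be-1}\hbar^{-2}\ll 1$; this tightness also explains why the lemma is stated at the $(H_{N,\hbar})^{2}$ level rather than higher powers, since iterating would require control on $V_c^{3}$, which is not bounded with respect to the kinetic energy near the origin.
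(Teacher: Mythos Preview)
Your decomposition $H_{N,\hbar}+N = \sum_j K_j^0 + V^{\mathrm{tot}}$ with $K_j^0=1-\tfrac{\hbar^2}{2}\Delta_{x_j}$ purely kinetic separates the potential from the one-particle operators \emph{before} extracting the $N(N-1)$ factor, and this loses the cancellation the argument needs. The paper writes $H_{N,\hbar}+N=\sum_i T_i$ with
\[
T_i \;=\; 1-\tfrac{\hbar^2}{2}\Delta_i+\tfrac{1}{2N}\sum_{j\ne i}(V_N+V_c)(x_i-x_j),
\]
so that $\tfrac{1}{2N}V_N(x_1-x_2)$ sits inside $T_1$. After $\psi=(1-w_{12})\phi$, the scattering equation $(-\hbar^2\Delta+\tfrac{1}{N}V_N)(1-w_{12})=0$ then \emph{exactly} removes the singular $V_N(x_1-x_2)$ contribution from $T_1\psi$, leaving the first-order operator $L_1=1-\tfrac{\hbar^2}{2}\Delta_1+\tfrac{\hbar\nabla w_{12}}{1-w_{12}}\hbar\nabla_1$ (symmetric in the weighted measure $(1-w_{12})^2dx$) together with only the harmless potentials $V_N(x_1-x_j)$, $j\ge3$, which are dropped by positivity.

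In your route this cancellation does not happen. The substitution inside $K_1^0$ produces $\tfrac{\hbar^2}{2}(\Delta w_{12})\phi=-\tfrac{1}{2N}V_N(x_1-x_2)(1-w_{12})\phi$, now carrying the full $N(N-1)$ prefactor, while the $V_N(x_1-x_2)$ piece you set aside lives in the cross term $KV^{\mathrm{tot}}+V^{\mathrm{tot}}K$, which carries a different power of $N$; the two cannot ``combine'' to cancel. Moreover, your integration-by-parts treatment of the cross term leaves the residual
\[
-\frac{\hbar^2}{N}\sum_{j<k}\int(\Delta V_N)(x_j-x_k)\,|\psi|^2 \;\sim\; \hbar^2 N\int(\Delta V_N)(x_1-x_2)\,\rho^{(2)}_{N,\hbar}.
\]
Since $\|\Delta V_N\|_{L^1}\sim N^{2\beta}$, the bound of Lemma~\ref{lemma:Sobolev type estimate,3d} makes this term of size $\hbar^2 N^{1+2\beta}\hbar^{-4}$ against a main term of order $N^2$, so absorbing it requires $N^{2\beta-1}\hbar^{-2}\ll1$. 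For $\beta>\tfrac12$ this is strictly stronger than the stated hypothesis $N^{\beta-1}\hbar^{-2}\ll1$, so the sketch does not cover the full range $\beta\in(0,1)$. The remedy is precisely the paper's organizational move: include the pair potential in $T_i$ before extracting the $N(N-1)$ factor, so the scattering equation does its job exactly rather than leaving $V_N$ and $\Delta V_N$ residuals to be estimated.
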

\begin{proof}
Let
	\begin{align}\label{equ:Ti}
		T_{i}:=1-\frac{\hbar^{2}}{2}\Delta_{i}+\frac{1}{2N}\sum_{j:j\neq i}V_{N}(x_{i}-x_{j})+\frac{1}{2N}\sum_{j:j\neq i}V_{c}(x_{i}-x_{j}),
	\end{align}
we rewrite the Hamiltonian \eqref{equ:hamiltonian}
	\begin{align*}
		H_{N,\hbar}+N=\sum_{i=1}^{N}T_{i}.
	\end{align*}
	By the symmetry of $\psi$, we have
	\begin{align}\label{equ:H2 energy estimate,T1T2}
		\lra{\psi,(H_{N,\hbar}+N)^{2}\psi}
		=&\sum_{i,j}^{N}\lra{\psi,T_{i}T_{j}\psi}\\
		=& N(N-1)\lra{\psi,T_{1}T_{2}\psi}+N\lra{\psi,T_{1}^{2}\psi}\notag\\
		\geq& N(N-1)\lra{\psi,T_{1}T_{2}\psi}.\notag
	\end{align}
	Note that $\psi=(1-w_{12})\phi_{12}$ and we have
	\begin{align*}
		-\hbar^{2}\Delta_{1}\psi=&-\hbar^{2}\Delta_{1}[(1-w_{12})\phi_{12}]\\
		=&(1-w_{12})(-\hbar^{2}\Delta_{1}\phi_{12})+2\hbar \nabla_{1} w_{12}
		\hbar\nabla_{1}\phi_{12}+\hbar^{2}\Delta_{1} w_{12}\phi_{12}.
	\end{align*}
	Thus, together with the scattering equation \eqref{equ:scattering equation}, we arrive at
	\begin{align}\label{equ:T1}
		T_{1}\psi=
		&T_{1}[(1-w_{12})\phi_{12}]\\
		=&(1-w_{12})(\phi_{12}-\frac{\hbar^{2}}{2}\Delta_{1}\phi_{12})+\hbar \nabla_{1} w_{12}
		\hbar\nabla_{1}\phi_{12}+\frac{\hbar^{2}}{2}\Delta_{1} w_{12}\phi_{12}\notag\\
		&+(1-w_{12})\lrc{\frac{1}{2N}\sum_{j\geq 2}V_{N}(x_{1}-x_{j})\phi_{12}+
			\frac{1}{2N}\sum_{j\geq 2}V_{c}(x_{1}-x_{j})\phi_{12}}\notag\\
		=&(1-w_{12})\lrc{\phi_{12}-\frac{\hbar^{2}}{2}\Delta_{1}\phi_{12}+\frac{\hbar \nabla_{1} w_{12}}{1-w_{12}}
			\hbar\nabla_{1}\phi_{12}}\notag\\
		&+(1-w_{12})\lrc{\frac{1}{2N}\sum_{j\geq 3}V_{N}(x_{1}-x_{j})\phi_{12}+
			\frac{1}{2N}\sum_{j\geq 2}V_{c}(x_{1}-x_{j})\phi_{12}}.\notag
	\end{align}
	Similarly, we also have
	\begin{align}\label{equ:T2}
		T_{2}\psi=&(1-w_{12})\lrc{\phi_{12}-\frac{\hbar^{2}}{2}\Delta_{2}\phi_{12}+\frac{\hbar \nabla w_{12}}{1-w_{12}}
			\hbar\nabla_{2}\phi_{12}}\\
		&+(1-w_{12})\lrc{\frac{1}{2N}\sum_{j\geq 3}V_{N}(x_{2}-x_{j})\phi_{12}+
			\frac{1}{2N}\sum_{j\neq  2}V_{c}(x_{2}-x_{j})\phi_{12}}.\notag
	\end{align}
Further define the shorthands
	\begin{align}
		&L_{1}:=1-\frac{\hbar^{2}}{2}\Delta_{1}+\frac{\hbar \nabla_{1} w_{12}}{1-w_{12}}\hbar \nabla_{1},\label{equ:L1}\\
		&L_{2}:=1-\frac{\hbar^{2}}{2}\Delta_{2}+\frac{\hbar \nabla_{2} w_{12}}{1-w_{12}}\hbar \nabla_{2},\label{equ:L2}
	\end{align}
	which are symmetric with respect to the measure $(1-w_{12})^{2}dx$, that is,
	\begin{align}\label{equ:symmetry property,L}
		\int (1-w_{12})^{2}\ol{f}(L_{1}g)=&\int(1-w_{12})^{2}(\ol{L_{1}f})g\\
		=&\int (1-w_{12})^{2}\lrc{\ol{f}g+\frac{\hbar^{2}}{2} \nabla_{1}\ol{f}
			\nabla_{1}g}.\notag
	\end{align}
	Therefore, from \eqref{equ:T1} and \eqref{equ:T2} we obtain
	\begin{align*}
		&\lra{T_{1}\psi,T_{2}\psi}\\
		=&\int (1-w_{12})^{2} \lrs{L_{1}+\frac{1}{2N}\sum_{j\geq 3}(V_{N}+V_{c})(x_{1}-x_{j})+
			\frac{1}{2N}V_{c}(x_{1}-x_{2})}\ol{\phi}_{12}\\
		&\cdot \lrs{L_{2}+\frac{1}{2N}\sum_{j\geq 3}(V_{N}+V_{c})(x_{2}-x_{j})+\frac{1}{2N}V_{c}(x_{1}-x_{2})}\phi_{12}.
	\end{align*}
	Expanding it gives
	\begin{align}\label{equ:T1,T2}
		&\lra{T_{1}\psi,T_{2}\psi}\\
		= &\int(1-w_{12})^{2}L_{1}\ol{\phi}_{12}L_{2}\phi_{12}\notag\\
		&+\int(1-w_{12})^{2}(L_{1}\ol{\phi}_{12})\lrc{\frac{1}{2N}\sum_{j\geq 3}(V_{N}+V_{c})(x_{2}-x_{j})+\frac{1}{2N}
			V_{c}(x_{1}-x_{2})}\phi_{12}\notag\\
		&+\int(1-w_{12})^{2}\lrc{\frac{1}{2N}\sum_{j\geq 3}(V_{N}+V_{c})(x_{1}-x_{j})+\frac{1}{2N}
			V_{c}(x_{1}-x_{2})}\ol{\phi}_{12}L_{2}\phi_{12}\notag\\
		&+\int(1-w_{12})^{2}\lrc{\frac{1}{2N}\sum_{j\geq 3}(V_{N}+V_{c})(x_{1}-x_{j})+\frac{1}{2N}
			V_{c}(x_{1}-x_{2})}\ol{\phi}_{12}\notag\\
		&\quad\quad\quad\quad\quad\quad\quad\cdot \lrc{\frac{1}{2N}\sum_{j\geq 3}(V_{N}+V_{c})(x_{1}-x_{j})+\frac{1}{2N}
			V_{c}(x_{1}-x_{2})}\ol{\phi}_{12}.\notag
	\end{align}
	By the nonnegativity of the potentials, we can discard the last term on the r.h.s of \eqref{equ:T1,T2}. The symmetry property \eqref{equ:symmetry property,L} of the operators $L_{1}$ and $L_{2}$ then yields
	\begin{align*}
		&\lra{T_{1}\psi,T_{2}\psi}\\
		\geq &\int(1-w_{12})^{2}L_{1}\ol{\phi}_{12}L_{2}\phi_{12}\\
		&+\int(1-w_{12})^{2}\lrs{|\phi_{12}|^{2}+\frac{\hbar^{2}}{2}|\nabla_{1}\phi_{12}|^{2}}\frac{1}{2N}\sum_{j\geq 3}(V_{N}+V_{c})(x_{2}-x_{j})\\
		&+\int(1-w_{12})^{2}|\phi_{12}|^{2}\frac{1}{2N}V_{c}(x_{1}-x_{2})+\frac{\hbar^{2}}{2}\int (1-w_{12})^{2}
		\nabla_{1}\ol{\phi}_{12} \nabla_{1}\lrs{\frac{1}{2N}
			V_{c}(x_{1}-x_{2})\phi_{12}}\\
		&+\int(1-w_{12})^{2}\lrs{|\phi_{12}|^{2}+\frac{\hbar^{2}}{2}|\nabla_{2}\phi_{12}|^{2}}\frac{1}{2N}\sum_{j\geq 3}(V_{N}+V_{c})(x_{2}-x_{j})\\
		&+\int(1-w_{12})^{2}|\phi_{12}|^{2}\frac{1}{2N}V_{c}(x_{1}-x_{2})+\frac{\hbar^{2}}{2}\int(1-w_{12})^{2} \nabla_{2}\lrs{\frac{1}{2N}
			V_{c}(x_{1}-x_{2}) \ol{\phi}_{12}}\nabla_{2}\phi_{12}.
	\end{align*}
	Again using the nonnegativity of the potentials, we reach
	\begin{align}\label{equ:T1,T2,reduced form}
		\lra{T_{1}\psi,T_{2}\psi}\geq&\int(1-w_{12})^{2}L_{1}\ol{\phi}_{12}L_{2}\phi_{12}\\
		&+\frac{\hbar^{2}}{2}\int (1-w_{12})^{2}
		\nabla_{1}\ol{\phi}_{12} \nabla_{1}\lrs{\frac{1}{2N}
			V_{c}(x_{1}-x_{2})\phi_{12}}\notag\\
		&+\frac{\hbar^{2}}{2}\int(1-w_{12})^{2} \nabla_{2}\lrs{\frac{1}{2N}
			V_{c}(x_{1}-x_{2}) \ol{\phi}_{12}}\nabla_{2}\phi_{12}\notag\\
		=&I+II+III.\notag
	\end{align}
For the first term $I$ on the r.h.s of \eqref{equ:T1,T2,reduced form}, by \eqref{equ:symmetry property,L}, we have
	\begin{align}\label{equ:I,estimate}
		I=&\int(1-w_{12})^{2}\lrc{\ol{\phi}_{12}L_{2}\phi_{12}+\frac{\hbar^{2}}{2}\nabla_{1}\ol{\phi}_{12}\nabla_{1}L_{2}\phi_{12}}\\
		=&\int(1-w_{12})^{2}\lrc{|\phi_{12}|^{2}+\frac{\hbar^{2}}{2}|\nabla_{2}\phi_{12}|^{2}+\frac{\hbar^{2}}{2}|\nabla_{1}\phi_{12}|^{2}+\frac{\hbar^{4}}{4}|\nabla_{1}\nabla_{2}\phi_{12}|^{2}}\notag\\
		&+\frac{\hbar^{2}}{2}
		\int(1-w_{12})^{2}\nabla_{1}\ol{\phi}_{12}[\nabla_{1},L_{2}]\phi_{12}\notag\\
		\geq&\frac{1}{2}\int\lrc{|\phi_{12}|^{2}+\frac{\hbar^{2}}{2}|\nabla_{2}\phi_{12}|^{2}+\frac{\hbar^{2}}{2}|\nabla_{1}\phi_{12}|^{2}+\frac{\hbar^{4}}{4}|\nabla_{1}\nabla_{2}\phi_{12}|^{2}}\notag\\
		&+\frac{\hbar^{2}}{2}
		\int(1-w_{12})^{2}\nabla_{1}\ol{\phi}_{12}[\nabla_{1},L_{2}]\phi_{12},\notag
	\end{align}
	where in the last inequality we have used Lemma \ref{lemma:scattering equation} that $(1-w_{12})^{2}\geq \frac{1}{2}$.
	To control the last term on the r.h.s of \eqref{equ:I,estimate},
	we note that
	\begin{align}
		|[\nabla_{1},L_{2}]|=\hbar^{2}\bbabs{\lrc{\nabla_{1},\frac{\nabla_{2} w_{12}}{1-w_{12}}}}\leq
		\hbar^{2}\lrc{\frac{|\nabla^{2}w_{12}|}{1-w_{12}}+\lrs{\frac{\nabla w_{12}}{1-w_{12}}}^{2}}.\notag
	\end{align}
	Therefore, we have
	\begin{align*}
		\frac{\hbar^{2}}{2}
		\int(1-w_{12})^{2}\nabla_{1}\ol{\phi}_{12}[\nabla_{1},L_{2}]\phi_{12}
		\leq& \frac{\hbar^{4}}{2}\int \lrs{|\nabla^{2}w_{12}|+|\nabla w_{12}|^{2}}|\nabla_{1}\phi_{12}||\phi_{12}|\\
		=&I_{1}+I_{2}.
	\end{align*}
	
	For $I_{1}$, by H\"{o}lder and Sobolev inequalities we have
	\begin{align}
		I_{1}\leq& \hbar^{4} \n{\nabla^{2}w_{12}}_{L_{x_{2}}^{\frac{3}{2}}}\n{\nabla_{1}\phi_{12}}_{L^{2}L_{x_{2}}^{6}}\n{\phi_{12}}_{L^{2}L_{x_{2}}^{6}}\notag\\
		\lesssim& \hbar^{4} \n{\nabla^{2}w_{12}}_{L_{x_{2}}^{\frac{3}{2}}}\n{\nabla_{1}\nabla_{2}\phi_{12}}_{L^{2}L_{x_{2}}^{2}}\n{\nabla_{2}\phi_{12}}_{L^{2}L_{x_{2}}^{2}}.\notag
	\end{align}
	By the Calder\'{o}n-Zygmund theory which implies that $\n{\nabla^{2} f}_{L^{p}}\lesssim \n{\Delta f}_{L^{p}}$ for $1<p<\infty$ and the scattering equation
	\eqref{equ:scattering equation}, we get
	\begin{align}
		\hbar^{2}\n{\nabla^{2}w_{12}}_{L_{x_{2}}^{\frac{3}{2}}}\lesssim \hbar^{2}\n{\Delta w_{12}}_{L_{x_{2}}^{\frac{3}{2}}}\leq \frac{1}{N}\n{V_{N}(x_{1}-x_{2})}_{L_{x_{2}}^{\frac{3}{2}}}\lesssim \frac{N^{\be}}{N}.\notag
	\end{align}
	Thus, we arrive at
	\begin{align}\label{equ:I1,estimate}
		I_{1}\lesssim \frac{N^{\be}}{N}\hbar^{2}\lrs{\n{\nabla_{1}\nabla_{2}\phi_{12}}_{L^{2}}^{2}+
			\n{\nabla_{1}\phi_{12}}_{L^{2}}^{2}}.
	\end{align}
	
	For $I_{2}$, by the properties of the scattering function in Lemma \ref{lemma:scattering equation}, we have
	\begin{align*}
		|\nabla w_{12}|\leq \frac{C}{N\hbar^{2}(|x_{1}-x_{2}|^{2}+N^{-2\be})},
	\end{align*}
	which implies that
	\begin{align*}
		|\nabla w_{12}|^{2}\lesssim \frac{N^{2\be}}{N\hbar^{2}}\frac{1}{N\hbar^{2}|x_{1}-x_{2}|^{2}}=\frac{N^{2\be}}{N^{2}\hbar^{4}|x_{1}-x_{2}|^{2}}.
	\end{align*}
	Then by Cauchy-Schwarz and Hardy's inequalities, we get
	\begin{align}\label{equ:I2,estimate}
		I_{2}\leq& \hbar^{4} \int |\nabla w_{12}|^{2} (|\nabla_{1}\phi_{12}|^{2}
		+|\phi_{12}|^{2})\\
		\lesssim& \frac{N^{2\be}}{N^{2}}\lrs{\n{\nabla_{1}\nabla_{2}\phi_{12}}_{L^{2}}^{2}+
			\n{\nabla_{2}\phi_{12}}_{L^{2}}^{2}}.\notag
	\end{align}

	Next, we deal with the terms $II$ and $III$ in \eqref{equ:I,estimate}. For $II$, we have
	\begin{align*}
		II=&\frac{\hbar^{2}}{2}\int (1-w_{12})^{2}
		\nabla_{1}\ol{\phi}_{12} \nabla_{1}\lrs{\frac{1}{2N}
			V_{c}(x_{1}-x_{2})\phi_{12}}\\
		=& \frac{\hbar^{2}}{2}\int (1-w_{12})^{2}
		\lrc{|\nabla_{1}\phi_{12}|^{2} \frac{1}{2N}
			V_{c}(x_{1}-x_{2})+(\nabla_{1}\ol{\phi}_{12})\phi_{12}
			\nabla_{1}\frac{1}{2N}
			V_{c}(x_{1}-x_{2})}\\
		\geq&\frac{\hbar^{2}}{2}\int (1-w_{12})^{2}
		(\nabla_{1}\ol{\phi}_{12})\phi_{12}
		\nabla_{1}\frac{1}{2N}
		V_{c}(x_{1}-x_{2}),
	\end{align*}
	where in the last inequality we used the positivity of the Coulomb potential. Noting that $|\nabla V_{c}(x)|\lesssim |x|^{-2}$, we can use Cauchy-Schwarz and Hardy's inequalities to obtain
	\begin{align}\label{equ:II,estimate}
		II\geq& -\frac{\hbar^{2}}{2N}\int \lrs{|\nabla_{1}\phi_{12}|^{2}+|\phi_{12}|^{2}}\frac{1}{|x_{1}-x_{2}|^{2}}\\
		\gtrsim & -\frac{\hbar^{2}}{N}\lrs{\n{\nabla_{1}\nabla_{2}\phi_{12}}_{L^{2}}^{2}+
			\n{\nabla_{2}\phi_{12}}_{L^{2}}^{2}}. \notag
	\end{align}
	As the term $III$ can be estimated in the same way as $II$, we also have
	\begin{align}\label{equ:III,estimate}
		III	\gtrsim & -\frac{\hbar^{2}}{N}\lrs{\n{\nabla_{1}\nabla_{2}\phi_{12}}_{L^{2}}^{2}+
			\n{\nabla_{2}\phi_{12}}_{L^{2}}^{2}}.
	\end{align}
	
	Together with estimates \eqref{equ:T1,T2,reduced form}--\eqref{equ:III,estimate}, we arrive at
	\begin{align*}
		\lra{T_{1}\psi,T_{2}\psi}\geq &
		\frac{1}{2}\int\lrc{|\phi_{12}|^{2}+\frac{\hbar^{2}}{2}|\nabla_{2}\phi_{12}|^{2}+\frac{\hbar^{2}}{2}|\nabla_{1}\phi_{12}|^{2}+\frac{\hbar^{4}}{4}|\nabla_{1}\nabla_{2}\phi_{12}|^{2}}\\
		&-C(\frac{N^{\be}}{N}\hbar^{2}+\frac{N^{2\be}}{N^{2}})\lrs{\n{\nabla_{1}\nabla_{2}\phi_{12}}_{L^{2}}^{2}+
			\n{\nabla_{1}\phi_{12}}_{L^{2}}^{2}}\\
		\geq& \frac{1}{16}\int |\phi_{12}|^{2}+\hbar^{2}|\nabla_{2}\phi_{12}|^{2}+\hbar^{2}|\nabla_{1}\phi_{12}|^{2}+\hbar^{4}|\nabla_{1}\nabla_{2}\phi_{12}|^{2}\\
=& \frac{1}{16}\lra{(1-\hbar^{2}\Delta_{1})(1-\hbar^{2}\Delta_{2})\phi_{12},\phi_{12}},
	\end{align*}
	where in the second-to-last inequality we have used that $N^{\be-1}\hbar^{-2}\ll 1$. With \eqref{equ:H2 energy estimate,T1T2}, we complete the proof of the estimate \eqref{equ:H2 energy estimate,lower bound}.
\end{proof}

\begin{proposition}\label{lemma:H2 energy estimate,psi}
Let $\be\in(0,1)$ and $N^{\be-1}\hbar^{2}\ll 1$. Define
\begin{align*}
\phi_{N,\hbar,12}(t,X_{N})=\frac{\psi_{N,\hbar}(t,X_{N})}{1-w_{N,\hbar}(x_{1}-x_{2})}.
\end{align*}
There exists a constant $C>0$ such that
\begin{align}\label{equ:H2 energy estimate,phi12}
\lra{(1-\hbar^{2}\Delta_{x_{1}})(1-\hbar^{2}\Delta_{x_{2}})\phi_{N,\hbar,12}(t),\phi_{N,\hbar,12}(t)}\leq C
\end{align}
for all $t\in \R$.
\end{proposition}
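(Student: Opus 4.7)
The plan is to chain together Lemma \ref{lemma:H2 energy estimate,lower bound}, energy conservation of the $N$-body dynamics, and the initial data hypothesis (a). Since all three ingredients are already in place, the proof is essentially a matter of matching constants and powers of $N$.

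First, I would apply Lemma \ref{lemma:H2 energy estimate,lower bound} to $\psi = \psi_{N,\hbar}(t) \in L_s^2(\mathbb{R}^{3N})$ (bosonic symmetry is preserved by the flow). This yields
\begin{align*}
\lra{(1-\hbar^{2}\Delta_{x_{1}})(1-\hbar^{2}\Delta_{x_{2}})\phi_{N,\hbar,12}(t),\phi_{N,\hbar,12}(t)}
\leq \frac{16}{N(N-1)}\lra{\psi_{N,\hbar}(t),(H_{N,\hbar}+N)^{2}\psi_{N,\hbar}(t)}.
\end{align*}
Next, I would use that $(H_{N,\hbar}+N)^2$ commutes with $H_{N,\hbar}$, so the $N$-body Schr\"odinger flow preserves the quantity on the right (this is just the $(H_{N,\hbar})^2$ conservation law accompanying \eqref{equ:energy conservation law}). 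Therefore
\begin{align*}
\lra{\psi_{N,\hbar}(t),(H_{N,\hbar}+N)^{2}\psi_{N,\hbar}(t)}
= \lra{\psi_{N,\hbar}(0),(H_{N,\hbar}+N)^{2}\psi_{N,\hbar}(0)}.
\end{align*}

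Finally, I would factor out $N$ to match the normalization in hypothesis (a): $(H_{N,\hbar}+N)^{2} = N^{2}(H_{N,\hbar}/N+1)^{2}$, so by \eqref{equ:n-body energy bound,initial data condition},
\begin{align*}
\lra{\psi_{N,\hbar}(0),(H_{N,\hbar}+N)^{2}\psi_{N,\hbar}(0)} = N^{2}\lra{\psi_{N,\hbar}(0),(H_{N,\hbar}/N+1)^{2}\psi_{N,\hbar}(0)} \leq N^{2}E_{0}^{2}.
\end{align*}
Combining the three displays and using $N^{2}/(N(N-1)) \leq 2$ for $N \geq 2$ gives the bound with $C = 32 E_{0}^{2}$, uniform in $t$.

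I do not expect any real obstacle at this stage, since the hard analytic work, namely the comparison between $\lra{\psi,(H_{N,\hbar}+N)^{2}\psi}$ and the $H^{2}$-energy of the regularized wave function $\phi_{12}$, is already accomplished in Lemma \ref{lemma:H2 energy estimate,lower bound} under the restriction $N^{\beta-1}\hbar^{-2}\ll 1$ (which is implied by the polynomial restriction \eqref{equ:restriction,N,h}). The only small point to verify is that $(H_{N,\hbar})^{2}$ is indeed conserved under the $N$-body flow, but this is immediate from self-adjointness of $H_{N,\hbar}$ and the fact that $e^{-itH_{N,\hbar}/\hbar}$ commutes with any polynomial in $H_{N,\hbar}$.
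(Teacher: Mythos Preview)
Your proposal is correct and follows exactly the same three-step route as the paper: apply Lemma \ref{lemma:H2 energy estimate,lower bound} to $\psi_{N,\hbar}(t)$, invoke conservation of $(H_{N,\hbar}+N)^{2}$ under the flow, and finish with the initial energy bound \eqref{equ:n-body energy bound,initial data condition}. Your constant $32E_{0}^{2}$ is in fact slightly more honest than the paper's $16E_{0}^{2}$, since $\frac{16N^{2}}{N(N-1)}$ is only bounded by $32$ for $N\geq 2$, not by $16$.
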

\begin{proof}
By the $(H_{N,\hbar})^{2}$ energy estimate in Lemma \ref{lemma:H2 energy estimate,lower bound}, we have
\begin{align*}
&\lra{(1-\hbar^{2}\Delta_{x_{1}})(1-\hbar^{2}\Delta_{x_{2}})\phi_{N,\hbar,12}(t),\phi_{N,\hbar,12}(t)}\\
\leq &\frac{16}{N(N-1)}\lra{\psi_{N,\hbar}(t),(N+H_{N,\hbar})^{2}\psi_{N,\hbar}(t)}\\
=&\frac{16}{N(N-1)}\lra{\psi_{N,\hbar}(0),(N+H_{N,\hbar})^{2}\psi_{N,\hbar}(0)}\\
\leq & 16(E_{0})^{2},
\end{align*}
where we have used the conservation of $(H_{N,\hbar})^{2}$ in the second-to-last equality, and the initial energy condition \eqref{equ:n-body energy bound,initial data condition} in the last inequality.
\end{proof}

\section{Functional Inequalities}\label{section:Functional Inequalities}

In the section, with the a-priori energy bound established in Proposition \ref{lemma:H2 energy estimate,psi}, we control the $\delta$-type potential parts $\mathcal{F}_{\delta}(t)$ and $\wt{\mathcal{F}}_{\delta}(t)$ and establish the functional inequalities \eqref{equ:functional inequality,fdelta,outline} and \eqref{equ:lower bound,fdelta,outline}. In Section \ref{section:Error Analysis of Two-Body Term}, we deal with the error analysis of the two-body term of $\wt{\mathcal{F}}_{\delta}(t)$ and then find the main part of $\wt{\mathcal{F}}_{\delta}(t)$. In Section \ref{section:Tamed Singularities}, we estimate the main part. By a replacement argument, we find proper approximations of $\mathcal{F}_{\delta}(t)$ and $\wt{\mathcal{F}}_{\delta}(t)$, and hence arrive at a reduction of the functional inequality. We then complete the proof of the reduced version of functional inequalities in Section \ref{section:Reduced Version of Functional Inequality}.

The main goal of the section is the following proposition which is the precise form of \eqref{equ:functional inequality,fdelta,outline} and \eqref{equ:lower bound,fdelta,outline}.

\begin{proposition}\label{lemma:functional inequality,fdelta,V}
Let $\be\in (0,1)$, we have the estimate
\begin{equation}\label{equ:functional inequality,fdelta}
\wt{\mathcal{F}}_{\delta}(t) \lesssim  \mathcal{F}_{\delta}(t)
+O(N^{\be-1}\hbar^{-6}+N^{-\frac{\be}{3}}\hbar^{-4}+N^{-\frac{1}{10}}\hbar^{-4})
\end{equation}
and a lower bound of $\mathcal{F}_{\delta}(t)$
\begin{align}\label{equ:lower bound,fdelta}
0 \leq \mathcal{F}_{\delta}(t) +O(N^{\be-1}\hbar^{-6} +N^{-\frac{\beta}{3}}\hbar^{-4}+N^{-\frac{1}{10}}\hbar^{-4}).
\end{align}
Here, the notation $O(a+b)$ is a shorthand for $O(a)+O(b)$
and the notation
$O(N^{-\al_{1}}\hbar^{-\al_{2}})$ denotes the same order of $N^{-\al_{1}}\hbar^{-\al_{2}}$ up to an unimportant constant\footnote{The constant could depend on the usual Sobolev constants and the fixed parameters such as the time $T_{0}$, the energy bound $E_{0}$, and the Sobolev norms of $(\rho,u)$ but the constant is independent of $(N,\hbar)$. }$C$.

\end{proposition}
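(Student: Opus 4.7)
My plan is to follow the four-stage structure suggested by the outline: decompose $\wt{\mathcal{F}}_\delta(t)$ into a main part $MP$ involving $\operatorname{div} u(x)$ and an error part $EP$ involving the difference $u(x)-u(y)$; control $EP$ via the two-body $H^1$ energy of Proposition \ref{lemma:H2 energy estimate,psi}; replace $V_N$ in $MP$ by a slowly-varying $G_N$; and reduce to a functional inequality on the mesoscopic scale. Concretely, I would integrate by parts in the two-body term of \eqref{equ:evolution,Fdelta} to move one gradient off $V_N$, apply an approximation-of-identity argument to the one-body $-b_0$ term, and match the symmetric pieces on both sides of $\mathcal{F}_\delta$ and $\wt{\mathcal{F}}_\delta$, producing the decomposition $\wt{\mathcal{F}}_\delta(t)=MP+EP+\text{small}$. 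The crucial observation is that the factor $(u(x)-u(y))V_N(x-y)$ in $EP$ provides a Lipschitz gain $|x-y|\lesssim N^{-\beta}$ on $\operatorname{supp} V_N$ that is unavailable for Coulomb or general potentials.

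For the error part, since $\rho_{N,\hbar}^{(2)}$ has no direct $H^1$ bound, I would insert the scattering factorization $(1-w_{N,\hbar})^2$ and split
\begin{align*}
EP = &-\int(u(x)-u(y))V_N(x-y)\bigl[\nabla_x(1-w_{N,\hbar}(x-y))^2\bigr]\frac{\rho_{N,\hbar}^{(2)}(x,y)}{(1-w_{N,\hbar}(x-y))^2}dx\,dy\\
&-\int(u(x)-u(y))V_N(x-y)(1-w_{N,\hbar}(x-y))^2\nabla_x\!\left[\frac{\rho_{N,\hbar}^{(2)}(x,y)}{(1-w_{N,\hbar}(x-y))^2}\right]dx\,dy.
\end{align*}
The first piece concentrates near $|x-y|\lesssim N^{-\beta}$ where $|\nabla w_{N,\hbar}|$ can reach size $N^{2\beta}/(N\hbar^2)$ by Lemma \ref{lemma:scattering equation}, but is compensated by the Lipschitz cancellation; the second piece has regular integrand and is absorbed by the $H^1$ bound for $\phi_{N,\hbar,12}$ together with $\|V_N\|_{L^1}\lesssim 1$. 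Keeping track of the $N,\hbar$ powers should give $EP=O(N^{\beta-1}\hbar^{-6}+N^{-\beta/2}\hbar^{-4})$. For the main part, I would (i) regularize $\rho_{N,\hbar}^{(2)}$ by inserting $(1-w_{N,\hbar})^2$, using scattering decay to make the correction $O(w_{N,\hbar})$ negligible, and (ii) replace $V_N$ by $b_0 G_N$, where $G_N(x)=N^{3\eta}G(N^\eta x)$ with $\eta<1/3$, exploiting that $V_N-b_0 G_N$ has zero mass while the now-regularized integrand has bounded Sobolev norm. This yields the approximations \eqref{equ:approximation,Fdelta}--\eqref{equ:approximation,evolution,Fdelta}.

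It then remains to prove the reduced functional inequality displayed just before \eqref{equ:two-body structure}, which I would derive by decomposing the two-body structure as
\begin{align*}
&\tfrac{N-1}{N}\rho_{N,\hbar}^{(2)}(x,y)-\rho_{N,\hbar}^{(1)}(x)\rho(y)-\rho(x)\rho_{N,\hbar}^{(1)}(y)+\rho(x)\rho(y)\\
&\quad = [\rho_{N,\hbar}^{(1)}(x)-\rho(x)][\rho_{N,\hbar}^{(1)}(y)-\rho(y)]+\tfrac{N-1}{N}\bigl[\rho_{N,\hbar}^{(2)}(x,y)-\rho_{N,\hbar}^{(1)}(x)\rho_{N,\hbar}^{(1)}(y)\bigr]+O(N^{-1}).
\end{align*}
Since $G_N$ is slowly varying and nonnegative, the first term integrated against $G_N(x-y)$ is nonnegative up to a small mollification error, which gives \eqref{equ:lower bound,fdelta} and, when multiplied by $\operatorname{div} u$, a bound by $\|\operatorname{div} u\|_{L^\infty}$ times the $\mathcal{F}_\delta$-version. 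For the quantum correlation piece, the slow variation of $G_N$ allows me to treat $\operatorname{div} u(x)G_N(x-y)$ as essentially a scalar-times-$G_N$ weight on the $N^{-\eta}$ support, yielding $\wt{\mathcal{F}}_\delta(t)\lesssim \mathcal{F}_\delta(t)+o(1)$. The hardest step I anticipate is the $V_N\to b_0 G_N$ replacement: it must be justified using only the scattering representation and the $H^1$ energy, with no profile information on $V$ beyond $V\geq 0$, spherical symmetry, and $\int V=b_0$. This becomes particularly delicate as $\beta\to 1^-$, where the microscopic scale $N^{-\beta}$ and the mesoscopic scale $N^{-\eta}$ are maximally separated and the error $N^{\beta-1}\hbar^{-6}$ becomes the critical bottleneck.
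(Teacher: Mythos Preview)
Your first three stages---integration by parts to isolate $MP+EP$, the scattering factorization and two-body $H^1$ bound for $EP$, and the $V_N\to b_0G_N$ replacement---match the paper's argument essentially verbatim, including the error budget $O(N^{\beta-1}\hbar^{-6}+N^{-\beta/2}\hbar^{-4})$ for $EP$.

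The gap is in your treatment of the reduced functional inequality. Your decomposition into $[\rho_{N,\hbar}^{(1)}-\rho]^{\otimes 2}$ plus the quantum correlation $\tfrac{N-1}{N}\rho_{N,\hbar}^{(2)}-\rho_{N,\hbar}^{(1)}\otimes\rho_{N,\hbar}^{(1)}$ is algebraically fine, but neither piece has a pointwise-nonnegative integrand, so the sentence ``the slow variation of $G_N$ allows me to treat $\operatorname{div}u(x)G_N(x-y)$ as essentially a scalar-times-$G_N$ weight'' does not justify pulling out $\|\operatorname{div}u\|_{L^\infty}$. Slow variation lets you swap $\operatorname{div}u(x)$ for $\operatorname{div}u(z)$ at a midpoint $z$ up to $O(N^{-\eta})$, but you still need the remaining integral to be nonnegative, and that is precisely what is not obvious for a sign-indefinite correlation. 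Likewise, for the lower bound \eqref{equ:lower bound,fdelta} you explain only the $[\rho_{N,\hbar}^{(1)}-\rho]^{\otimes 2}$ contribution and say nothing about why $\int G_N(x-y)\bigl[\tfrac{N-1}{N}\rho_{N,\hbar}^{(2)}-\rho_{N,\hbar}^{(1)}\otimes\rho_{N,\hbar}^{(1)}\bigr]$ is bounded below.

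The paper supplies the missing mechanism: one lifts to the $N$-particle level via the empirical deviation $\nu_{X_N}=\tfrac{1}{N}\sum_i\delta_{x_i}-\rho$, so that the entire bracket in \eqref{equ:two-body structure} becomes $\int G_N(x-y)\,\nu_{X_N}(dx)\,\nu_{X_N}(dy)\,\rho_{N,\hbar}(X_N)dX_N$ minus a diagonal term of size $G_N(0)/N=O(N^{3\eta-1})$. One then chooses $G$ \emph{specifically Gaussian} so that $G_N=G_{0,N}\ast G_{0,N}$ exactly, whence the main part becomes $\int F(z)\,|G_{0,N}\ast\nu_{X_N}(z)|^2\,\rho_{N,\hbar}(X_N)\,dz\,dX_N$, whose integrand is pointwise nonnegative in $(z,X_N)$. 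This is what licenses the $\|F\|_{L^\infty}$ bound; the $F(x)\to F(z)$ swap is then controlled separately by Cauchy--Schwarz. The same construction with $\mu_{X_N}=\tfrac{1}{N}\sum_i\delta_{x_i}-\rho_{N,\hbar}^{(1)}$ gives the lower bound on the correlation piece. Without this empirical-measure-plus-convolution-square device (or an equivalent), your Step~4 does not close.
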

\begin{proof}
We postpone the proof of Proposition \ref{lemma:functional inequality,fdelta,V} to the end of the Section \ref{section:Reduced Version of Functional Inequality}.
\end{proof}

\subsection{Error Analysis of Two-Body Term}\label{section:Error Analysis of Two-Body Term}

From the expression of $\wt{\mathcal{F}}_{\delta}(t)$
 \begin{align}
\wt{\mathcal{F}}_{\delta}(t)=&
\frac{N-1}{N}\int (u(t,x_{1})-u(t,x_{2}))\nabla V_N(x_{1}-x_{2})\rho_{N,\hbar}^{(2)}(t,x_{1},x_{2})dx_{1}dx_{2}\\
&-b_{0}\int \operatorname{div} u(t,x_{1})\rho(t,x_{1})\lrc{\rho(t,x_{1})-2\rho_{N,\hbar}^{(1)}(t,x_{1})}dx_{1},\notag
\end{align}
the difficult part is the two-body term
\begin{align*}
\int (u(t,x)-u(t,y))\cdot \nabla V_{N}(x-y) \rho_{N,\hbar}^{(2)}(t,x,y)dxdy.
\end{align*}
At first sight, the lack of a uniform regularity estimate for the two-body density function $\rho_{N,\hbar}^{(2)}(x,y)$ makes further analysis difficult.
With the singular correlation structure in mind, we decompose the two-body density function into the singular and regular parts
\begin{align*}
\rho_{N,\hbar}^{(2)}(x,y)=(1-w_{N,\hbar}(x-y))^{2} \frac{\rho_{N,\hbar}^{(2)}(x,y)}{(1-w_{N,\hbar}(x-y))^{2}},
\end{align*}
and rewrite the two-body term as
\begin{align}\label{equ:two-body term,rewritten form}
\int (u(x)-u(y))\cdot \nabla V_{N}(x-y) (1-w_{N,\hbar}(x-y))^{2} \frac{\rho_{N,\hbar}^{(2)}(x,y)}{(1-w_{N,\hbar}(x-y))^{2}}dxdy.
\end{align}
That is, the singularities come from the potential $\nabla V_{N}$ and the singular correlation function $(1-w_{N,\hbar}(x-y))^{2}$. As mentioned in \eqref{equ:cancellation structure,delta potential,outline} at the outline, a key observation to beat the singularities is
a cancellation structure from the difference coupled with the $\delta$-type potential
\begin{align}\label{equ:cancellation structure,delta potential}
(u(x)-u(y))V_{N}(x-y),
\end{align}
which would vanish as $N$ tends to the infinity. Such a structure is special for the $\delta$-type potential. Many common potentials including the Coulomb do not carry such a property.

We will prove that, based on \eqref{equ:H2 energy estimate,phi12}, the cancellation structure \eqref{equ:cancellation structure,delta potential} dominates the singularities generated by the delta-potential and singular correlation function, which allows us extract the main term from the two-body term.

\begin{lemma}\label{lemma:error analysis,main term}
Let $\be\in(0,1)$, we have
\begin{align}\label{equ:error analysis,main term}
&\int (u(x)-u(y))\cdot \nabla V_{N}(x-y) \rho_{N,\hbar}^{(2)}(x,y)dxdy\\
=&-\int \operatorname{div} u(x) V_{N}(x-y)\rho_{N,\hbar}^{(2)}(x,y)dxdy\pm O(N^{\be-1}\hbar^{-6}+N^{-\frac{\be}{2}}\hbar^{-4}),\notag
\end{align}
where the notation
$f=g\pm O(N,\hbar)$ means
$|f-g|\leq O(N,\hbar)$.
\end{lemma}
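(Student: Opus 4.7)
The plan is to integrate by parts in $x$ and then show that the remaining error is small. Using $(\nabla V_N)(x-y)=\nabla_x V_N(x-y)$ and distributing $\nabla_x$ via the product rule on the integrand $(u(x)-u(y))\rho^{(2)}_{N,\hbar}(x,y)$, one obtains
\[
\int(u(x)-u(y))\cdot \nabla V_N(x-y)\rho^{(2)}_{N,\hbar}(x,y)\,dx\,dy
= -\int\operatorname{div} u(x)\,V_N(x-y)\rho^{(2)}_{N,\hbar}(x,y)\,dx\,dy + EP,
\]
with
\[
EP=-\int V_N(x-y)\,(u(x)-u(y))\cdot\nabla_x\rho^{(2)}_{N,\hbar}(x,y)\,dx\,dy.
\]
The game is then to bound $|EP|\lesssim N^{\beta-1}\hbar^{-6}+N^{-\beta/2}\hbar^{-4}$. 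No crude estimate on $\rho^{(2)}_{N,\hbar}$ can work, since the singular correlation destroys any uniform $H^{1}$-type control of the two-body density.

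To inject regularity I factor out the scattering solution: set $\phi_{12}=\psi_{N,\hbar}/(1-w_{N,\hbar}(x_1-x_2))$ and $\tilde\rho^{(2)}(x,y)=\int|\phi_{12}|^{2}\,dX_{3,N}$, so that $\rho^{(2)}_{N,\hbar}(x,y)=(1-w_{N,\hbar}(x-y))^{2}\tilde\rho^{(2)}(x,y)$, where $\phi_{12}$ obeys the mixed $H^{1,1}$ two-body energy bound of Proposition~\ref{lemma:H2 energy estimate,psi}. The product rule splits $EP=EP_1+EP_2$ with
\[
EP_1=2\int V_N(x-y)(u(x)-u(y))(1-w_{N,\hbar})(\nabla w_{N,\hbar})(x-y)\tilde\rho^{(2)}\,dxdy,
\]
\[
EP_2=-\int V_N(x-y)(u(x)-u(y))(1-w_{N,\hbar})^{2}\nabla_x\tilde\rho^{(2)}\,dxdy.
\]
The core mechanism in both pieces is the cancellation $|u(x)-u(y)|\le\|\nabla u\|_\infty|x-y|$, which combined with the support $|x-y|\lesssim N^{-\beta}$ of $V_N$ supplies an extra factor of $|x-y|$; this is what defeats the singularity of $\nabla w_{N,\hbar}$ in $EP_1$ and the limited regularity of $\tilde\rho^{(2)}$ in $EP_2$.

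For $EP_1$, Lemma~\ref{lemma:scattering equation} yields the sharp pointwise bound $|z|\,|\nabla w_{N,\hbar}(z)|\lesssim N^{\beta-1}\hbar^{-2}$, reducing matters to controlling $\int V_N(x_1-x_2)|\phi_{12}|^{2}\,dX_N$. I would attack this through the Kato-type operator inequality $V_N(x_1-x_2)\lesssim\|V_N\|_{L^{3/2}}(-\Delta_{x_2})$ in the relative variable, chained with the mixed-derivative energy $\|\hbar^{2}\nabla_{x_1}\nabla_{x_2}\phi_{12}\|_{L^{2}}\le C$ and a Sobolev/Gagliardo--Nirenberg interpolation; the resulting $\hbar^{-4}$ factor combines with the cancellation and $|z||\nabla w_{N,\hbar}|$ estimate to produce the advertised $N^{\beta-1}\hbar^{-6}$. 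For $EP_2$ the derivative falls on $\tilde\rho^{(2)}$; I would apply the pointwise identity $|\nabla_x\tilde\rho^{(2)}|\le 2\sqrt{\tilde\rho^{(2)}}\bigl(\int|\nabla_{x_1}\phi_{12}|^{2}dX_{3,N}\bigr)^{1/2}$, then Cauchy--Schwarz, and pair the weighted norm $\||x|V_N\|_{L^{6/5}}=O(N^{-\beta/2})$ against an $L^{6}$-Sobolev norm again controlled by the $H^{1,1}$-energy of Proposition~\ref{lemma:H2 energy estimate,psi}, yielding the $N^{-\beta/2}\hbar^{-4}$ contribution.

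The main obstacle is the delicate bookkeeping of competing singularities: $V_N$ peaks like $N^{3\beta}$, $\nabla w_{N,\hbar}$ like $N^{2\beta-1}\hbar^{-2}$, and $\tilde\rho^{(2)}$ carries only a mixed $H^{1,1}$-regularity with $\hbar$-weights. Every step must use the $|x-y|$-cancellation at full strength and select the correct Lebesgue exponent for $V_N$---the critical $L^{3/2}$ for $EP_1$ and a subcritical $L^{6/5}$ for $EP_2$---so that the $N$-powers collapse to those advertised. Once $EP_1$ and $EP_2$ are in place, summing them gives the lemma.
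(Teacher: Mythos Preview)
Your overall architecture matches the paper exactly: integrate by parts, split the error into the piece where $\nabla_x$ hits the correlation factor $(1-w_{N,\hbar})^2$ (your $EP_1$, the paper's term $A$) and the piece where it hits the regularized density $\tilde\rho^{(2)}$ (your $EP_2$, the paper's term $B$), and exploit the cancellation $|u(x)-u(y)|\le\|\nabla u\|_\infty|x-y|$ on the support of $V_N$. The decomposition and the mechanism are right.

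The gap is in your treatment of $EP_1$. You propose the one-derivative Kato inequality $V_N(x_1-x_2)\lesssim\|V_N\|_{L^{3/2}}(-\Delta_{x_2})$, but $\|V_N\|_{L^{3/2}}=N^{\beta}\|V\|_{L^{3/2}}$, so this injects an extra factor $N^{\beta}$ that no subsequent ``Sobolev/Gagliardo--Nirenberg interpolation'' can remove. Your bound on $\int V_N|\phi_{12}|^2$ would then be $N^{\beta}\hbar^{-2}$, not the $N$-independent $\hbar^{-4}$ you claim, and the final contribution to $EP_1$ becomes $N^{2\beta-1}\hbar^{-4}$ rather than $N^{\beta-1}\hbar^{-6}$. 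For $\beta>\tfrac12$ this does not vanish and the lemma would fail. The fix is to use the \emph{two-derivative} $L^1$ operator inequality (Lemma~\ref{lemma:Sobolev type estimate,3d}, estimate~\eqref{equ:Sobolev type estimate,3d,L1}):
\[
V_N(x_1-x_2)\le C\|V\|_{L^1}(1-\Delta_{x_1})(1-\Delta_{x_2}),
\]
whose constant is $N$-independent because $\|V_N\|_{L^1}=\|V\|_{L^1}$. This uses the full mixed $H^{1,1}$ energy and yields $\int V_N|\phi_{12}|^2\lesssim\hbar^{-4}$, hence $EP_1\lesssim N^{\beta-1}\hbar^{-6}$ as required.

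For $EP_2$ your $L^{6/5}$ idea is in the right spirit but not sharp enough as sketched; the paper instead applies a weighted Cauchy--Schwarz with a free parameter $\alpha$, bounds the $|\phi_{12}|^2$ piece with the $L^1$ two-derivative inequality and the $|\nabla_{x_1}\phi_{12}|^2$ piece with the $L^{3/2}$ one-derivative inequality \eqref{equ:Sobolev type estimate,3d,L2} (which costs $N^{\beta}$ but is paired with the already-available $\nabla_{x_1}$), and then optimizes $\alpha$ to land exactly on $N^{-\beta/2}\hbar^{-4}$. So the roles of $L^1$ and $L^{3/2}$ are the reverse of what you wrote: $L^1$ is the ``critical'' scale-invariant choice for the undifferentiated factor, while $L^{3/2}$ is used only where an extra derivative is already present to absorb the $N^{\beta}$ loss.
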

\begin{proof}
First, due to the singular correlation structure, we rewrite the two-body term as \eqref{equ:two-body term,rewritten form}.
To employ the cancellation structure \eqref{equ:cancellation structure,delta potential}, we take the derivative off $V_{N}$ by integrating by parts
\begin{align}
&\int (u(x)-u(y))\cdot \nabla V_{N}(x-y) (1-w_{N,\hbar}(x-y))^{2} \frac{\rho_{N,\hbar}^{(2)}(x,y)}{(1-w_{N,\hbar}(x-y))^{2}}dxdy\notag\\
=&-\int \operatorname{div}u(x) V_{N}(x-y) \rho_{N,\hbar}^{(2)}(x,y)dxdy\notag\\
&-\int (u(x)-u(y))V_{N}(x-y)\nabla_{x}\lrc{(1-w_{N,\hbar}(x-y))^{2} \frac{\rho_{N,\hbar}^{(2)}(x,y)}{(1-w_{N,\hbar}(x-y))^{2}}}dxdy.\label{equ:error term,x-y,F}
\end{align}
It remains to show the term \eqref{equ:error term,x-y,F} is indeed an error term. For simplicity, we set
$$w_{12}=w_{N,\hbar}(x_{1}-x_{2}),\quad \nabla w_{12}=(\nabla w_{N,\hbar})(x_{1}-x_{2}),\quad \phi_{N,\hbar,12}=(1-w_{12})\psi_{N,\hbar}.$$
Then we have
\begin{align}\label{equ:error analysis,error term}
&\int (u(x)-u(y))V_{N}(x-y)\nabla_{x}\lrc{(1-w_{N,\hbar}(x-y))^{2} \frac{\rho_{N,\hbar}^{(2)}(x,y)}{(1-w_{N,\hbar}(x-y))^{2}}}dxdy\\
=&\int (u(x_{1})-u(x_{2}))V_{N}(x_{1}-x_{2})\nabla_{x_{1}}\lrc{(1-w_{12})\phi_{N,\hbar,12}}^{2} dX_{N}\notag\\
 \leq& 2 \n{\nabla u}_{L^{\infty}}\int|x_{1}-x_{2}|V_{N}(x_{1}-x_{2})|\nabla w_{12}|(1-w_{12})|\phi_{N,\hbar,12}|^{2}
dX_{N}\notag\\
 &+2 \n{\nabla u}_{L^{\infty}}\int |x_{1}-x_{2}|V_{N}(x_{1}-x_{2})(1-w_{12})^{2}|\nabla_{x_{1}}\phi_{N,\hbar,12}|
 |\phi_{N,\hbar,12}|dX_{N} \notag\\
=:&2(A+B).\notag
\end{align}
We bound $A$ and $B$, using the properties of the scattering function, the two-body energy estimate \eqref{equ:H2 energy estimate,phi12} and the operator inequalities in Lemma \ref{lemma:Sobolev type estimate,3d}.

For the term $A$, by Lemma \ref{lemma:scattering equation}, we have the upper bound estimate
$$(1-w_{12})\leq 1,\quad
|\nabla w_{12}|\lesssim \frac{N^{2\be}}{N\hbar^{2}}.$$
Therefore, we arrive at
\begin{align}\label{equ:error analysis,error term A,estimate}
A=&\int \n{\nabla u}_{L^{\infty}}|x_{1}-x_{2}|V_{N}(x_{1}-x_{2})|\nabla w_{12}|(1-w_{12})|\phi_{N,\hbar,12}|^{2}
dX_{N}\\
\lesssim & \frac{N^{\be}}{N\hbar^{2}}\n{\nabla u}_{L^{\infty}}\int N^{\be}|x_{1}-x_{2}|V_{N}(x_{1}-x_{2}) |\phi_{N,\hbar,12}|^{2}
dX_{N}\notag\\
\lesssim& \frac{N^{\be}}{N\hbar^{2}}\n{\nabla u}_{L^{\infty}} \n{|x|V(x)}_{L^{1}}\lra{(1-\Delta_{x_{1}})(1-\Delta_{x_{2}})\phi_{N,\hbar,12},\phi_{N,\hbar,12}},\notag
\end{align}
where in the last inequality we used the operator inequality \eqref{equ:Sobolev type estimate,3d,L1}.

For the term $B$, we first discard $(1-w_{12})^{2}$ and then use Cauchy-Schwarz to get
\begin{align}\label{equ:error analysis,term B}
B=&\n{\nabla u}_{L^{\infty}}\int |x_{1}-x_{2}|V_{N}(x_{1}-x_{2})(1-w_{12})^{2}|\nabla_{x_{1}}\phi_{N,\hbar,12}|
 |\phi_{N,\hbar,12}|dX_{N}\notag\\
\leq &\frac{\n{\nabla u}_{L^{\infty}}}{N^{\be}}
\left[\al \lra{N^{\be}|x_{1}-x_{2}|V_{N}(x_{1}-x_{2})\phi_{N,\hbar,12},\phi_{N,\hbar,12}}\right.\\
&\left.+\al^{-1}\lra{N^{\be}|x_{1}-x_{2}|V_{N}(x_{1}-x_{2})\nabla_{x_{1}}\phi_{N,\hbar,12},\nabla_{x_{1}}\phi_{N,\hbar,12}}\right].\notag
\end{align}
By applying the operator inequality \eqref{equ:Sobolev type estimate,3d,L1} to the first term and the operator inequality \eqref{equ:Sobolev type estimate,3d,L2} to the second term on the r.h.s of \eqref{equ:error analysis,term B}, we obtain
\begin{align}\label{equ:error analysis,error term A,estimate}
B\leq&\frac{\n{\nabla u}_{L^{\infty}}}{N^{\be}}
\left(\al \n{|x|V(x)}_{L^{1}}
\lra{(1-\Delta_{x_{1}})(1-\Delta_{x_{2}})\phi_{N,\hbar,12},\phi_{N,\hbar,12}}\right.\\
&\left.+\al^{-1}
N^{\beta}\n{|x|V(x)}_{L^{\frac{3}{2}}}\lra{(1-\Delta_{x_{1}})(1-\Delta_{x_{2}})\phi_{N,\hbar,12},\phi_{N,\hbar,12}}\right)\notag\\
\lesssim & N^{-\frac{\be}{2}}\lra{(1-\Delta_{x_{1}})(1-\Delta_{x_{2}})\phi_{N,\hbar,12},\phi_{N,\hbar,12}},\notag
\end{align}
where in the last inequality we optimized the choice of $\al$.

Together with \eqref{equ:error analysis,error term} and estimates for the terms $A$ and $B$, we reach
\begin{align*}
&\int (u(x)-u(y))V_{N}(x-y)\nabla_{x}\lrc{(1-w_{N,\hbar}(x-y))^{2} \frac{\rho_{N,\hbar}^{(2)}(x,y)}{(1-w_{N,\hbar}(x-y))^{2}}}dxdy\\
\lesssim& \lrs{N^{\beta-1}\hbar^{-2}+N^{-\frac{\be}{2}}}
\lra{(1-\Delta_{x_{1}})(1-\Delta_{x_{2}})\phi_{N,\hbar,12},\phi_{N,\hbar,12}}\\
\lesssim& \lrs{N^{\be-1}\hbar^{-2}+N^{-\frac{\be}{2}}}\hbar^{-4}
\lra{(1-\hbar^{2}\Delta_{x_{1}})(1-\hbar^{2}\Delta_{x_{2}})\phi_{N,\hbar,12},\phi_{N,\hbar,12}}\\
\lesssim& N^{\be-1}\hbar^{-6}+N^{-\frac{\be}{2}}\hbar^{-4},
\end{align*}
where in the last inequality we used the two-body $H^{1}$ energy bound \eqref{equ:H2 energy estimate,phi12}. This completes the proof of \eqref{equ:error analysis,main term}.

\end{proof}

\subsection{Tamed Singularities}\label{section:Tamed Singularities}
As a result of the error analysis of the two-body term, we are able to capture
the main term
\begin{align*}
\int \operatorname{div} u(x) V_{N}(x-y)\rho_{N,\hbar}^{(2)}(x,y)dxdy.
\end{align*}
Using the identity approximation to the one-body term of $\wt{\mathcal{F}}_{\delta}(t)$, we arrive at
\begin{align*}
\wt{\mathcal{F}}_{\delta}(t)\sim & -\int \operatorname{div} u(t,x) V_{N}(x-y)
 \lrc{\frac{N-1}{N}\rho_{N,\hbar}^{(2)}(t,x,y)\right. \\
& \left. \quad \quad   -\rho_{N,\hbar}^{(1)}(t,x)\rho(t,y)-\rho(t,x)\rho_{N,\hbar}^{(1)}(t,y)+\rho(t,x)\rho(t,y)}dxdy.
\end{align*}
By the identity approximation again, we also have the approximation of $\mathcal{F}_{\delta}(t)$ that
\begin{align*}
\mathcal{F}_{\delta}(t)\sim &\int V_{N}(x-y) \lrc{\frac{N-1}{N}\rho_{N,\hbar}^{(2)}(t,x,y)\right.\\
&\quad \quad \left.-\rho_{N,\hbar}^{(1)}(t,x)\rho(t,y)-\rho(t,x)\rho_{N,\hbar}^{(1)}(t,y)+\rho(t,x)\rho(t,y)}dxdy.\notag
\end{align*}
We now need to deal with the sharp singularity of $V_{N}(x)$.
We tame the singularity by replacing $V_{N}(x)$ with a slowly varying potential
with a number of good properties. However, the replacement relies on the regularity of the integrand. Therefore, we
again need to decompose the two-body density function as the singular and relatively regular parts.
We obtain proper approximations of $\mathcal{F}_{\delta}(t)$ and $\wt{\mathcal{F}}_{\delta}(t)$ and arrive at a reduced version of functional inequalities via a careful analysis.

The following is the main lemma of the section.

\begin{lemma}\label{lemma:replacement argument}
Let
$$
G(x)=\lrs{\frac{1}{\pi}}^{\frac{3}{2}}e^{-|x|^{2}},\quad G_{N}(x)=N^{3\eta}G(N^{\eta}x).
$$
Then for the two-body term we have
\begin{align}\label{equ:replacement,F,two body}
&\int F(x) V_{N}(x-y)\rho_{N,\hbar}^{(2)}(x,y)dxdy\\
=&b_{0}\int  F(x)G_{N}(x-y)\rho_{N,\hbar}^{(2)}(x,y)dxdy\pm O(N^{\be-1}\hbar^{-6} +N^{-\frac{\beta}{3}}\hbar^{-4}+N^{-\frac{\eta}{3}}\hbar^{-4}),\notag
\end{align}
and for the one-body term we have
\begin{align}\label{equ:replacement,F,one body}
&b_{0}\int F(x)\rho(x)\lrc{\rho(x)-2\rho_{N,\hbar}^{(1)}(x)}dx\\
=&b_{0}\int F(x)G_{N}(x-y)\lrc{-\rho_{N,\hbar}^{(1)}(x)\rho(y)-\rho(x)\rho_{N,\hbar}^{(1)}(y)+\rho(x)\rho(y)}dxdy\pm O(N^{-\eta}).\notag
\end{align}
In particular, given $F(x)=1$, we have the approximation of $\mathcal{F}_{\delta}(t)$
\begin{align}\label{equ:replacement,F=1,Fdelta}
\mathcal{F}_{\delta}(t)=&b_{0}\int G_{N}(x-y) \lrc{\frac{N-1}{N}\rho_{N,\hbar}^{(2)}(x,y)\right.\\
&\quad \quad \left.-\rho_{N,\hbar}^{(1)}(x)\rho(y)-\rho(x)\rho_{N,\hbar}^{(1)}(y)+\rho(x)\rho(y)}dxdy\notag\\
&\pm O(N^{\be-1}\hbar^{-6} +N^{-\frac{\beta}{3}}\hbar^{-4}+N^{-\frac{\eta}{3}}\hbar^{-4}),\notag
\end{align}
and given $F(x)=\operatorname{div} u(x)$, we have the approximation of $\wt{\mathcal{F}}_{\delta}(t)$
\begin{align}\label{equ:replacement,F=div,Fdelta}
\wt{\mathcal{F}}_{\delta}(t)=&-b_{0}\int \operatorname{div} u(x) G_{N}(x-y)
 \lrc{\frac{N-1}{N}\rho_{N,\hbar}^{(2)}(x,y)\right.\\
& \left. \quad \quad   -\rho_{N,\hbar}^{(1)}(x)\rho(y)-\rho(x)\rho_{N,\hbar}^{(1)}(y)+\rho(x)\rho(y)}dxdy \notag\\
&\pm O(N^{\be-1}\hbar^{-6} +N^{-\frac{\beta}{3}}\hbar^{-4}+N^{-\frac{\eta}{3}}\hbar^{-4}).\notag
\end{align}
\end{lemma}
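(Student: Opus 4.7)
The plan is to establish the two-body replacement \eqref{equ:replacement,F,two body} and the one-body replacement \eqref{equ:replacement,F,one body} independently, and then specialize to $F\equiv 1$ for \eqref{equ:replacement,F=1,Fdelta} and $F=\operatorname{div} u$ for \eqref{equ:replacement,F=div,Fdelta}. The one-body identity \eqref{equ:replacement,F,one body} is the easy half: because $(\rho,u)$ is classically smooth by \eqref{equ:euler-poisson equation, regularity condition} with $s=5$, and $G$ is Schwartz with $\int G=1$, standard quantitative mollifier estimates yield $\n{G_N\ast f - f}_{L^\infty}\lesssim N^{-\eta}\n{\nabla f}_{L^\infty}$. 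Inserting $\int G_N(x-y)\rho(y)dy = \rho(x)+O(N^{-\eta})$ into each of the three one-body integrals and using $\n{\rho_{N,\hbar}^{(1)}}_{L^1}=\n{\rho}_{L^1}=1$ gives the claim; the constant $b_0$ appears naturally since $\int b_0 G_N = b_0 = \int V_N$.

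The two-body replacement \eqref{equ:replacement,F,two body} is the substantial part. Since $\rho_{N,\hbar}^{(2)}$ possesses no uniform regularity in the semiclassical limit, I would invoke the singular correlation factorization
\[
\rho_{N,\hbar}^{(2)}(x,y) = (1-w_{N,\hbar}(x-y))^{2}\,\widetilde\rho_{N,\hbar}^{(2)}(x,y),\qquad \widetilde\rho_{N,\hbar}^{(2)}(x,y) = \int|\phi_{N,\hbar,12}(x,y,X_{N-2})|^{2}dX_{N-2},
\]
which, by Proposition \ref{lemma:H2 energy estimate,psi}, places $\widetilde\rho_{N,\hbar}^{(2)}$ at the semiclassical $H^{1}_{x_{1}}\otimes H^{1}_{x_{2}}$ level. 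Expanding $(1-w)^{2}=1-2w+w^{2}$ splits the integral into a main piece (with $(1-w)^{2}$ replaced by $1$) and two correction pieces linear and quadratic in $w$. The correction pieces are handled exactly as in the proof of Lemma \ref{lemma:error analysis,main term}: the pointwise bound $0\leq w_{N,\hbar}\leq CN^{\beta-1}\hbar^{-2}$ from Lemma \ref{lemma:scattering equation}, combined with the operator inequalities of Lemma \ref{lemma:Sobolev type estimate,3d}, reduces them to the $O(N^{\beta-1}\hbar^{-6})$ portion of the stated error.

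The main piece is
\[
I := \int F(x)\,[V_N - b_0 G_N](x-y)\,\widetilde\rho_{N,\hbar}^{(2)}(x,y)\,dx\,dy,
\]
and here the key algebraic fact is the cancellation $\int(V_N - b_0 G_N) = b_0 - b_0 = 0$. Writing the $y$-integral as the convolution $(V_N - b_0 G_N)\ast_y \widetilde\rho_{N,\hbar}^{(2)}(x,\cdot)$ and splitting $V_N - b_0 G_N = (V_N - b_0\delta) - (b_0 G_N - b_0\delta)$, each piece is a mollifier of width $N^{-\beta}$ or $N^{-\eta}$ respectively. Applying the quantitative $L^p$ mollifier estimate $\n{(K_\lambda - b_0\delta)\ast g}_{L^p}\lesssim \lambda\n{\nabla g}_{L^p}$ to each piece, together with 3D Sobolev embedding to reconcile the diagonal $y\mapsto x$ evaluation with the $y$-gradient control on $\widetilde\rho_{N,\hbar}^{(2)}$ (which costs $\hbar^{-1}$ per derivative from the semiclassical $H^1\otimes H^1$ bound), produces the remaining errors $O(N^{-\beta/3}\hbar^{-4}+N^{-\eta/3}\hbar^{-4})$, where the cube-root loss reflects the 3D Sobolev trade and the $\hbar^{-4}$ absorbs two $y$-gradients on $\widetilde\rho_{N,\hbar}^{(2)}$. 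This step is the main obstacle: one must simultaneously exploit the zero-mean cancellation of $V_N - b_0 G_N$ across the disparate scales $N^{-\beta}$ and $N^{-\eta}$ and the limited, $\hbar$-loaded regularity of $\widetilde\rho_{N,\hbar}^{(2)}$, arranging the Sobolev trade so that the total error is $o(1)$ under the polynomial restriction \eqref{equ:restriction,N,h}.

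Once the two replacements are established, setting $F\equiv 1$ and adding \eqref{equ:replacement,F,two body} to \eqref{equ:replacement,F,one body} reproduces precisely the three terms defining $\mathcal{F}_\delta(t)$, giving \eqref{equ:replacement,F=1,Fdelta}. For \eqref{equ:replacement,F=div,Fdelta}, I first apply Lemma \ref{lemma:error analysis,main term} to rewrite the two-body term of $\widetilde{\mathcal{F}}_\delta(t)$ as $-\int \operatorname{div} u(x)V_N(x-y)\rho_{N,\hbar}^{(2)}(x,y)\,dxdy$ up to $O(N^{\beta-1}\hbar^{-6}+N^{-\beta/2}\hbar^{-4})$, and then apply \eqref{equ:replacement,F,two body}--\eqref{equ:replacement,F,one body} with $F=\operatorname{div} u\in L^\infty$, accumulating the errors to obtain the stated bound.
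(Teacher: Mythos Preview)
Your proposal is correct and follows essentially the same route as the paper: the same factorization $\rho_{N,\hbar}^{(2)}=(1-w_{12})^{2}\widetilde\rho_{N,\hbar}^{(2)}$, the same split $(1-w)^{2}=1+(-2w+w^{2})$ with the $w$-pieces handled by $|w_{12}|\lesssim N^{\beta-1}\hbar^{-2}$ and the operator inequality \eqref{equ:Sobolev type estimate,3d,L1}, and the same comparison of both $V_{N}$ and $b_{0}G_{N}$ to $b_{0}\delta$ followed by the triangle inequality. One point of precision: the bound on the main piece $I$ is exactly the content of the bilinear Poincar\'e-type inequality, Lemma~\ref{lemma:poincare type inequality,appendix}, applied with $\theta=\tfrac{1}{3}$ to $F(x_{1})\phi_{N,\hbar,12}$ and $\phi_{N,\hbar,12}$; your heuristic ``$L^{p}$ mollifier estimate plus Sobolev trade'' captures the right intuition, but the $\hbar^{-4}$ arises from $\|\langle\nabla_{x_{1}}\rangle\langle\nabla_{x_{2}}\rangle\phi_{N,\hbar,12}\|_{L^{2}}^{2}$ (one derivative in each variable on each copy of $\phi$), not from ``two $y$-gradients on $\widetilde\rho_{N,\hbar}^{(2)}$''.
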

\begin{proof}
For \eqref{equ:replacement,F,two body}, we recall $\phi_{N,\hbar,12}=(1-w_{12})\psi_{N,\hbar}$ and rewrite
\begin{align*}
&\int F(x) V_{N}(x-y)\rho_{N,\hbar}^{(2)}(x,y)dxdy\\
=&\lra{F(x)V_{N}(x-y)(1-w_{12})^{2}\phi_{N,\hbar,12},\phi_{N,\hbar,12}}\\
=&\lra{F(x)V_{N}(x-y)\phi_{N,\hbar,12},\phi_{N,\hbar,12}}+\lra{F(x)V_{N}(x-y)(-2w_{12}+(w_{12})^{2})\phi_{N,\hbar,12},\phi_{N,\hbar,12}}\\
=&I+II.
\end{align*}

For the term $I$, we use the Poincar$\acute{e}$ type inequality with $\theta=\frac{1}{3}$ in Lemma \ref{lemma:poincare type inequality,appendix} to obtain
\begin{align*}
&|\lra{F(x)(V_{N}(x-y)-b_{0}\delta(x-y))\phi_{N,\hbar,12},\phi_{N,\hbar,12}}|\\
\lesssim &N^{-\frac{\beta}{3}}\n{\lra{\nabla_{x_{1}}}\lra{\nabla_{x_{2}}}F(x_{1})\phi_{N,\hbar,12}}_{L^{2}}
\n{\lra{\nabla_{x_{1}}}\lra{\nabla_{x_{2}}}\phi_{N,\hbar,12}}_{L^{2}}\\
\lesssim& N^{-\frac{\beta}{3}}(\n{F}_{L^{\infty}}+\n{\nabla F}_{L^{\infty}})\n{\lra{\nabla_{x_{1}}}\lra{\nabla_{x_{2}}}\phi_{N,\hbar,12}}_{L^{2}}^{2}
\lesssim N^{-\frac{\beta}{3}}\hbar^{-4},
\end{align*}
where in the last inequality we used the two-body energy bound \eqref{equ:H2 energy estimate,phi12}.

For the term $II$, by Lemma \ref{lemma:scattering equation}, we have
$|w_{12}|\lesssim N^{\be-1}\hbar^{-2}$.
Therefore, we get
\begin{align*}
II\lesssim &\n{F}_{L^{\infty}}\lra{V_{N}(x-y)|w_{12}|\phi_{N,\hbar,12},\phi_{N,\hbar,12}}\\
\lesssim & N^{\be-1}\hbar^{-2}\lra{V_{N}(x-y)\phi_{N,\hbar,12},\phi_{N,\hbar,12}}\\
\lesssim & N^{\be-1}\hbar^{-2} \lra{(1-\Delta_{x_{1}})(1-\Delta_{x_{2}})\phi_{N,\hbar,12},\phi_{N,\hbar,12}}
\lesssim  N^{\be-1}\hbar^{-6},
\end{align*}
where we used the operator inequality \eqref{equ:Sobolev type estimate,3d,L1} in the second-to-last inequality and the two-body energy bound \eqref{equ:H2 energy estimate,phi12} in the last inequality.

In the same way, we also obtain
\begin{align*}
&b_{0}\int F(x) G_{N}(x-y)\rho_{N,\hbar}^{(2)}(x,y)dxdy\\
=&b_{0}\int F(x) \delta(x-y)\rho_{N,\hbar}^{(2)}(x,y)dxdy\pm O(N^{-\frac{\eta}{3}}\hbar^{-4}+N^{\be-1}\hbar^{-6}).
\end{align*}
Then by the triangle inequality, we arrive at
\begin{align*}
&\bbabs{\int F(x) V_{N}(x-y)\rho_{N,\hbar}^{(2)}(x,y)dxdy-b_{0}\int F(x) G_{N}(x-y)\rho_{N,\hbar}^{(2)}(x,y)dxdy}\\
\lesssim& N^{\be-1}\hbar^{-6} +N^{-\frac{\beta}{3}}\hbar^{-4}+N^{-\frac{\eta}{3}}\hbar^{-4},
\end{align*}
which completes the proof of \eqref{equ:replacement,F,two body}.

For \eqref{equ:replacement,F,one body}, we rewrite
\begin{align}
&\int F(x)\rho(x)\lrc{\rho(x)-2\rho_{N,\hbar}^{(1)}(x)}dx\notag\\
=&\int F(x)\delta(x-y)\lrc{-\rho_{N,\hbar}^{(1)}(x)\rho(y)-\rho(x)\rho_{N,\hbar}^{(1)}(y)+\rho(x)\rho(y)}dxdy\notag\\
=&\int F(x)G_{N}(x-y)\lrc{-\rho_{N,\hbar}^{(1)}(x)\rho(y)-\rho(x)\rho_{N,\hbar}^{(1)}(y)+\rho(x)\rho(y)}dxdy\notag\\
&-\lra{F\rho_{N,\hbar}^{(1)},(G_{N}-\delta)*\rho}-\lra{(G_{N}-\delta)*(F\rho),\rho_{N,\hbar}^{(1)}}+
\lra{F\rho,(G_{N}-\delta)*\rho}.\label{equ:replacement,F,one body,error term}
\end{align}
For the error terms in \eqref{equ:replacement,F,one body,error term}, we use H\"{o}lder and Sobolev inequalities
to get
\begin{align*}
&|\lra{F\rho_{N,\hbar}^{(1)},(G_{N}-\delta)*\rho}|+|\lra{(G_{N}-\delta)*(F\rho),\rho_{N,\hbar}^{(1)}}|+
|\lra{F\rho,(G_{N}-\delta)*\rho}|\\
\leq& \n{F}_{L^{\infty}}(\n{\rho_{N,\hbar}^{(1)}}_{L^{1}}+\n{\rho}_{L^{1}})\n{(G_{N}-\delta)*\rho}_{L^{\infty}}+
\n{\rho_{N,\hbar}^{(1)}}_{L^{1}}\n{(G_{N}-\delta)*(F\rho)}_{L^{\infty}}\\
\lesssim&\n{F}_{L^{\infty}}(\n{\rho_{N,\hbar}^{(1)}}_{L^{1}}+\n{\rho}_{L^{1}})\n{(G_{N}-\delta)*\lra{\nabla}^{2}\rho}_{L^{2}}+
\n{\rho_{N,\hbar}^{(1)}}_{L^{1}}\n{(G_{N}-\delta)*\lra{\nabla}^{2}(F\rho)}_{L^{2}}\\
\lesssim& N^{-\eta}\lrs{\n{F}_{L^{\infty}}\n{\rho}_{H^{3}}+\n{F\rho}_{H^{3}}}\\
\lesssim& N^{-\eta}\n{F}_{H^{3}}\n{\rho}_{H^{3}},
\end{align*}
where in the second-to-last inequality we used Lemma \ref{lemma:quantative estimate for identity approximation} and the mass conservation, and in the last inequality we used Leibniz rule and Sobolev inequality. Therefore, we complete the proof of \eqref{equ:replacement,F,one body}.

For \eqref{equ:replacement,F=1,Fdelta}, by taking $F(x)=1$ in \eqref{equ:replacement,F,two body} and \eqref{equ:replacement,F,one body}, we arrive at the approximation of $\mathcal{F}_{\delta}(t)$.

For \eqref{equ:replacement,F=div,Fdelta}, by the error analysis \eqref{equ:error analysis,main term} in Lemma \ref{lemma:error analysis,main term} we get
\begin{align*}
\wt{\mathcal{F}}_{\delta}=&
-b_{0}\int \operatorname{div} u(x) V_{N}(x-y)\rho_{N,\hbar}^{(2)}(x,y)dxdy\\
&-b_{0}\int \operatorname{div} u(x)\rho(x)\lrc{\rho(x)-2\rho_{N,\hbar}^{(1)}(x)}dx\\
&\pm O(N^{\be-1}\hbar^{-6}+N^{-\frac{\be}{2}}\hbar^{-4}).
\end{align*}

Then by taking $F(x)=\operatorname{div} u(x)$ in \eqref{equ:replacement,F,two body} and \eqref{equ:replacement,F,one body}, we get the approximation \eqref{equ:replacement,F=div,Fdelta} of $\wt{\mathcal{F}}_{\delta}(t)$.
\end{proof}

\subsection{Reduced Version of Functional Inequality}\label{section:Reduced Version of Functional Inequality}
After the analysis of error terms and simplification, we now work with a reduced form of functional inequality
\begin{align}
 &\int \operatorname{div} u(x) G_{N}(x-y)
 \lrc{\frac{N-1}{N}\rho_{N,\hbar}^{(2)}(x,y)-\rho_{N,\hbar}^{(1)}(x)\rho(y)-\rho(x)\rho_{N,\hbar}^{(1)}(y)
 +\rho(x)\rho(y)}dxdy\label{equ:reduced functional inequality}\\
 \lesssim& \int G_{N}(x-y) \lrc{\frac{N-1}{N}\rho_{N,\hbar}^{(2)}(x,y)-\rho_{N,\hbar}^{(1)}(x)\rho(y)-\rho(x)\rho_{N,\hbar}^{(1)}(y)+\rho(x)\rho(y)}dxdy +o(1),\notag
\end{align}
which is more concise than the original functional inequality.
However, it is unknown whether or not the integrand
\begin{align}
\frac{N-1}{N}\rho_{N,\hbar}^{(2)}(x,y)-\rho_{N,\hbar}^{(1)}(x)\rho(y)-\rho(x)\rho_{N,\hbar}^{(1)}(y)+\rho(x)\rho(y)
\end{align}
is non-negative, so we cannot directly bound the term $\operatorname{div} u(x)$ in \eqref{equ:reduced functional inequality}. We prove that, if integrated against $G_{N}(x-y)$, (4.18) provides a non-negative contribution up to a small correction and use that to prove the lower bound of $\mathcal{F}_{\delta}(t)$.
The special structure of a relatively slowly varying and explicit potential $G_{N}(x)$ plays a critical role in establishing the reduced version of functional inequality.
We then complete the proof of Proposition \ref{lemma:functional inequality,fdelta,V}.

\begin{lemma}[Reduced Version of Functional Inequality]\label{lemma:functional inequality,fdelta}
Let $\eta<\frac{1}{3}$ to be determined, we have
\begin{align}\label{equ:functional inequality,fdelta,G}
 &\int F(x) G_{N}(x-y)
 \lrc{\frac{N-1}{N}\rho_{N,\hbar}^{(2)}(x,y)-\rho_{N,\hbar}^{(1)}(x)\rho(y)-\rho(x)\rho_{N,\hbar}^{(1)}(y)+\rho(x)\rho(y)}dxdy\\
 \leq& \n{F}_{L^{\infty}}\int G_{N}(x-y) \lrc{\frac{N-1}{N}\rho_{N,\hbar}^{(2)}(x,y)-\rho_{N,\hbar}^{(1)}(x)\rho(y)-\rho(x)\rho_{N,\hbar}^{(1)}(y)+\rho(x)\rho(y)}dxdy \notag \\
 &+O(N^{-\eta}\hbar^{-2}+N^{3\eta-1}).\notag
\end{align}
\end{lemma}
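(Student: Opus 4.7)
The plan is to exploit the fact that $G(x)=\pi^{-3/2}e^{-|x|^2}$ is a Gaussian, and hence $G_N=H_N\ast H_N$ for an explicit Gaussian $H_N$ (itself of mass $1$ and scale $N^{-\eta}$). This positive-definiteness is the only structural input we need, and it converts the problem into a statement about $L^2$ norms $\|H_N\ast\nu\|_{L^2}^2$, which have a manifest sign. I would first recast both sides of the inequality via the empirical measure $\mu_N=\frac1N\sum_i\delta_{x_i}$ under $|\psi_{N,\hbar}|^2$, yielding
\[
\int F(x)G_N(x-y)Q(x,y)\,dxdy=\mathbb{E}\Big[\int F(x)G_N(x-y)d(\mu_N-\rho)(x)d(\mu_N-\rho)(y)\Big]-\frac{G_N(0)}{N}\int F\,\rho_{N,\hbar}^{(1)},
\]
where $Q$ denotes the bracketed quantity; this is a direct computation using symmetry of $\psi_{N,\hbar}$ plus the identity $\mathbb{E}[\mu_N(x)\mu_N(y)]=\frac{N-1}{N}\rho_{N,\hbar}^{(2)}(x,y)+\frac{1}{N}\delta(x-y)\rho_{N,\hbar}^{(1)}(x)$. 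Since $G_N(0)=N^{3\eta}G(0)$, the removed diagonal contribution is $O(N^{3\eta-1})$, producing the second half of the stated error and, taking $F\equiv 1$, immediately giving $\int G_N Q\geq-O(N^{3\eta-1})$.

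Using $G_N=H_N\ast H_N$, I would rewrite
\[
\int F(x)G_N(x-y)d\nu\,d\nu=\int (H_N\ast (F\nu))(z)\,(H_N\ast\nu)(z)\,dz
=\int F(z)|H_N\ast\nu|^2(z)\,dz+\int R(z)(H_N\ast\nu)(z)\,dz,
\]
with $R(z):=\int H_N(z-x)\bigl(F(x)-F(z)\bigr)d\nu(x)$ the commutator remainder. The main term is controlled pointwise by $\|F\|_{L^\infty}|H_N\ast\nu|^2$, which after expectation is exactly $\|F\|_{L^\infty}\int G_N Q$ up to the $O(N^{3\eta-1})$ diagonal. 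The entire content of the lemma is therefore that the remainder $\int R\,(H_N\ast\nu)\,dz$ is $O(N^{-\eta}\hbar^{-2})$ in expectation; this is where the slow variation of $G_N$ enters, since the mean-value bound $|F(x)-F(z)|\le\|\nabla F\|_{L^\infty}|x-z|$ yields $|R(z)|\le\|\nabla F\|_{L^\infty}(\widetilde H_N\ast|\nu|)(z)$, where $\widetilde H_N(x):=|x|H_N(x)$ satisfies $\|\widetilde H_N\|_{L^1}=O(N^{-\eta})$.

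The main obstacle is controlling the commutator without spending the total-variation of $\mu_N-\rho$, which would cost several powers of $\hbar^{-1}$. The trick is to split
\[
Q(x,y)=\sigma(x)\sigma(y)+\Big[\tfrac{N-1}{N}\rho_{N,\hbar}^{(2)}(x,y)-\rho_{N,\hbar}^{(1)}(x)\rho_{N,\hbar}^{(1)}(y)\Big],\qquad \sigma:=\rho_{N,\hbar}^{(1)}-\rho,
\]
and treat each piece with its natural signed measure. The first piece is a genuine function-function bilinear form in $\sigma$; since the 1-body energy bound gives $\|\nabla\rho_{N,\hbar}^{(1)}\|_{L^2}\lesssim\hbar^{-1}$, Cauchy-Schwarz combined with Young's inequality yields
\[
\Bigl|\int F\,G_N\,\sigma\otimes\sigma\,dxdy-\int F|H_N\ast\sigma|^2\Bigr|\le\|\nabla F\|_{L^\infty}\|\widetilde H_N\|_{L^1}\|\sigma\|_{L^2}\|H_N\ast\sigma\|_{L^2}\lesssim N^{-\eta}\hbar^{-2}.
\]
The second (quantum-fluctuation) piece equals $\mathbb{E}[\int F G_N\,d\mu'd\mu']-\frac{G_N(0)}{N}\int F\rho_{N,\hbar}^{(1)}$ for $\mu':=\mu_N-\rho_{N,\hbar}^{(1)}$; after the same commutator decomposition, the remainder reduces to $\mathbb{E}[\int (\widetilde H_N\ast\widetilde H_N)(x_1-x_2)\cdots]$, which I would bound by the operator inequality $K_N(x_1-x_2)\le C\|K_N\|_{L^{3/2}}(1-\Delta_{x_1})(1-\Delta_{x_2})$ with $K_N=\widetilde H_N\ast\widetilde H_N$, invoking the two-body $H^1$ estimate of Proposition 3.3 to again reach $O(N^{-\eta}\hbar^{-2})$.

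Adding the main term, the two commutator remainders, and the $O(N^{3\eta-1})$ diagonal correction delivers the functional inequality. The same estimate with $F$ replaced by a general non-negative test function yields the lower bound \eqref{equ:lower bound,fdelta} upon setting $F\equiv 1$. I expect the bookkeeping of the commutator bound on the quantum-fluctuation piece to be the delicate step, since it is precisely here that the two-body $H^1$ estimate is needed to prevent the $\hbar$-loss from overwhelming the $N^{-\eta}$ smallness supplied by the slow variation of $G_N$.
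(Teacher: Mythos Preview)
Your proposal follows essentially the same route as the paper: both recast the left-hand side via the empirical measure $\nu_{X_N}=\mu_N-\rho$, use the Gaussian factorization $G_N=H_N\ast H_N$ (the paper writes $G_{0,N}$ for your $H_N$), and split off the manifestly nonnegative piece $\int F(z)\,|H_N\ast\nu_{X_N}(z)|^2\,dz$, leaving a commutator remainder generated by $F(x)-F(z)$. The only substantive difference is in how this remainder is organized. The paper applies Cauchy--Schwarz in $z$ directly and then decomposes $\nu_{X_N}=\mu_N-\rho$ in the $R$-factor, treating the smooth $\rho$-part by the approximate-identity estimate and the empirical part by a diagonal/off-diagonal split; the off-diagonal piece is where the two-body $H^1$ bound enters, exactly as you anticipate for your quantum-fluctuation term. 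Your alternative split $Q=\sigma\otimes\sigma+\bigl[\tfrac{N-1}{N}\rho^{(2)}_{N,\hbar}-\rho^{(1)}_{N,\hbar}\otimes\rho^{(1)}_{N,\hbar}\bigr]$ (equivalently $\nu_{X_N}=(\mu_N-\rho^{(1)}_{N,\hbar})+\sigma$) is a perfectly workable variant that reaches the same endpoint.

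One caution on your bookkeeping: the claim $\|\nabla\rho^{(1)}_{N,\hbar}\|_{L^2}\lesssim\hbar^{-1}$ (and hence $\|\sigma\|_{L^2}\lesssim\hbar^{-1}$) is not quite right; from the one-body energy one only gets $\|\rho^{(1)}_{N,\hbar}\|_{L^2}\lesssim\hbar^{-3/2}$ via Gagliardo--Nirenberg, and both your $\sigma\otimes\sigma$ commutator and the second Cauchy--Schwarz factor in the fluctuation piece pick up extra powers of $\hbar^{-1}$. The honest error for the commutator is closer to $N^{-\eta}\hbar^{-4}$ than $N^{-\eta}\hbar^{-2}$. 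This is harmless for the application, since after taking $\eta=\tfrac{3}{10}$ it is dominated by the $N^{-1/10}\hbar^{-4}$ already present in Proposition~\ref{lemma:functional inequality,fdelta,V}.
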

\begin{proof}
For simplicity, set $\rho_{N,\hbar}(X_{N})=|\psi_{N,\hbar}(X_{N})|^{2}$.
By the symmetry of  $\rho_{N,\hbar}(X_{N})$, we can write
\begin{align*}
& \int F(x) G_{N}(x-y)\lrc{\frac{N-1}{N}\rho_{N,\hbar}^{(2)}(x,y)-\rho_{N,\hbar}^{(1)}(x)\rho(y)-\rho(x)\rho_{N,\hbar}^{(1)}(y)+\rho(x)\rho(y)}dxdy\\
=&\frac{1}{N^{2}}\sum_{i\neq j}^{N}\int F(x_{i})G_{N}(x_{i}-x_{j})\rho_{N,\hbar}(X_{N})dX_{N}+\int F(x)G_{N}(x-y)\rho(x)\rho(y)dxdy\\
&-\frac{1}{N}\sum_{i=1}^{N}\int \int F(x_{i})G_{N}(x_{i}-y)\rho(y)dy \rho_{N,\hbar}(X_{N})dX_{N}\\
&-\frac{1}{N}\sum_{j=1}^{N}\int \int F(x)G_{N}(x-x_{j})\rho(x)dx \rho_{N,\hbar}(X_{N})dX_{N}\\
=&\int F(x)G_{N}(x-y)\lrc{\frac{1}{N^{2}}\sum_{i\neq j}^{N}\delta_{x_{i}}(x)\delta_{x_{j}}(y)+\rho(x)\rho(y)\right.\\
&\left.-\frac{1}{N}\sum_{i=1}^{N}\delta_{x_{i}}(x)\rho(y)-\rho(x)\frac{1}{N}\sum_{j=1}^{N}\delta_{x_{j}}(y)}dxdy \rho_{N,\hbar}(X_{N})dX_{N}.
\end{align*}
To simplify, we define the measure
\begin{align}
\nu_{X_{N}}(dx)=\frac{1}{N}\sum_{i=1}^{N}\delta_{x_{i}}(dx)-\rho(x)dx.
\end{align}
We rewrite
\begin{align}\label{equ:functional inequality,div,G term,rewritten form}
& \int F(x) G_{N}(x-y)\lrc{\frac{N-1}{N}\rho_{N,\hbar}^{(2)}(x,y)-\rho_{N,\hbar}^{(1)}(x)\rho(y)-\rho(x)\rho_{N,\hbar}^{(1)}(y)+\rho(x)\rho(y)}dxdy\\
=&\int F(x)G_{N}(x-y)\nu_{X_{N}}(dx)\nu_{X_{N}}(dy)\rho_{N,\hbar}(X_{N})dX_{N}-\frac{G_{N}(0)}{N}\int F(x)\rho_{N,\hbar}^{(1)}(x)dx.\notag
\end{align}
where the last term on the r.h.s of \eqref{equ:functional inequality,div,G term,rewritten form} comes from the diagonal summation.
In particular, if we take $F(x)=1$, we also have
\begin{align}\label{equ:functional inequality,G term,rewritten form}
& \int G_{N}(x-y)\lrc{\frac{N-1}{N}\rho_{N,\hbar}^{(2)}(x,y)-\rho_{N,\hbar}^{(1)}(x)\rho(y)-\rho(x)\rho_{N,\hbar}^{(1)}(y)+\rho(x)\rho(y)}dxdy\\
=&\int G_{N}(x-y)\nu_{X_{N}}(dx)\nu_{X_{N}}(dy)\rho_{N,\hbar}(X_{N})dX_{N}-\frac{G_{N}(0)}{N}\int \rho_{N,\hbar}^{(1)}(x)dx.\notag
\end{align}
Note that
\begin{align}\label{equ:equ:functional inequality,diagonal term}
\frac{G_{N}(0)}{N}\int F(x)\rho_{N,\hbar}^{(1)}(x)dx\leq N^{3\eta-1}\n{F}_{L^{\infty}},
\end{align}
which is a smallness term as long as $\eta<\frac{1}{3}$.

Next, we get into the analysis of the main term.
Note that the convolution property of the Gaussian function $G$, which is
\begin{align}\label{equ:G,square root}
G_{N}(x-y)=\int G_{0,N}(x-z)G_{0,N}(z-y)dz,
\end{align}
where $G_{0,N}(x)=N^{3\eta}G_{0}(N^{\eta}x)$ and $G_{0}(x)=\lrs{\frac{2}{\pi}}^{\frac{3}{2}}e^{-2|x|^{2}}$. Putting \eqref{equ:G,square root} into the main term of \eqref{equ:functional inequality,G term,rewritten form} gives
\begin{align}\label{equ:functional inequality,div,G term,main term}
&\int F(x)G_{N}(x-y)\nu_{X_{N}}(dx)\nu_{X_{N}}(dy)\rho_{N,\hbar}(X_{N})dX_{N}\\
=&\int F(x)G_{0,N}(x-z)G_{0,N}(z-y)\nu_{X_{N}}(dx)\nu_{X_{N}}(dy) \rho_{N,\hbar}(X_{N})dz dX_{N}\notag\\
=&A+B,\notag
\end{align}
where
\begin{align}
A=&\int (F(x)-F(z))G_{0,N}(x-z)G_{0,N}(z-y)\nu_{X_{N}}(dx)\nu_{X_{N}}(dy)\rho_{N,\hbar}(X_{N})dzdX_{N},\\
B=&\int F(z)G_{0,N}(x-z)G_{0,N}(z-y)\nu_{X_{N}}(dx)\nu_{X_{N}}(dy) \rho_{N,\hbar}(X_{N})dzdX_{N}.
\end{align}
Thus, we are left to bound the terms $A$ and $B$.

For the term $A$, we use Cauchy-Schwarz inequality to get
\begin{align*}
A^{2}\leq& \int \lrc{\int (F(x)-F(z))G_{0,N}(x-z)\nu_{X_{N}}(dx)}^{2} \rho_{N,\hbar}(X_{N}) dzdX_{N}  \\
&\cdot\int \lrc{\int G_{0,N}(z-y)\nu_{X_{N}}(dy)}^{2} \rho_{N,\hbar}(X_{N}) dzdX_{N} \\
\leq& 2(A_{1}+A_{2})\int \lrc{\int G_{0,N}(z-y)\nu_{X_{N}}(dy)}^{2} \rho_{N,\hbar}(X_{N}) dzdX_{N},
\end{align*}
where
\begin{align*}
A_{1}=&\int \lrc{\int (F(x)-F(z))G_{0,N}(x-z)\frac{1}{N}\sum_{i=1}^{N}\delta_{x_{i}}(dx) }^{2} \rho_{N,\hbar}(X_{N}) dzdX_{N},\\
A_{2}=&\int \lrc{\int (F(x)-F(z))G_{0,N}(x-z)\rho(x)dx}^{2} \rho_{N,\hbar}(X_{N}) dzdX_{N}.
\end{align*}

For $A_{1}$, we further decompose it into two parts
$A_{1}=A_{11}+A_{12}$,
where
the diagonal part is
\begin{align*}
A_{11}=&\frac{1}{N^{2}}\sum_{i=1}^{N}\int  (F(x_{i})-F(z))G_{0,N}(x_{i}-z)(F(x_{i})-F(z))G_{0,N}(x_{i}-z) \rho_{N,\hbar}(X_{N}) dzdX_{N},
\end{align*}
and the off-diagonal part is
\begin{align*}
A_{12}=&\frac{1}{N^{2}}\sum_{i\neq j}^{N}\int  (F(x_{i})-F(z))G_{0,N}(x_{i}-z)(F(x_{j})-F(z))G_{0,N}(x_{j}-z) \rho_{N,\hbar}(X_{N}) dzdX_{N}.
\end{align*}

For $A_{11}$, by the symmetry of $\rho_{N,\hbar}(X_{N})$, we have
\begin{align*}
A_{11}\leq& \frac{1}{N}\int  (F(x_{1})-F(z))G_{0,N}(x_{1}-z)(F(x_{1})-F(z))G_{0,N}(x_{1}-z) \rho_{N,\hbar}(X_{N}) dzdX_{N}\\
\leq &\frac{\n{\nabla F}_{L^{\infty}}^{2}}{N}\int \lrs{|x_{1}-z|G_{0,N}(x_{1}-z)}^{2}\rho_{N,\hbar}(X_{N}) dzdX_{N}\\
= &\frac{\n{\nabla F}_{L^{\infty}}^{2}}{N}\n{|x|G_{0,N}(x)}_{L^{2}}^{2}\int \rho_{N,\hbar}(X_{N})dX_{N}
\lesssim N^{\eta-1},
\end{align*}
where in the last inequality we used that $\n{|x|G_{0,N}(x)}_{L^{2}}^{2}\lesssim N^{\eta}$ and the mass conservation for $\rho_{N,\hbar}(X_{N})$.

For $A_{12}$, by the symmetry of $\rho_{N,\hbar}(X_{N})$, we also have
\begin{align*}
A_{12}\leq& \int  |(F(x_{1})-F(z))G_{0,N}(x_{1}-z)(F(x_{2})-F(z))G_{0,N}(x_{2}-z)| \rho_{N,\hbar}(X_{N}) dzdX_{N}\\
\leq & \n{\nabla F}_{L^{\infty}}^{2}\int |x_{1}-z|G_{0,N}(x_{1}-z)|x_{2}-z|G_{0,N}(x_{2}-z)| \rho_{N,\hbar}(X_{N}) dzdX_{N}\\
\leq& \frac{\n{\nabla F}_{L^{\infty}}^{2}}{N^{2\eta}}\int G_{1,N}(x_{1}-x_{2}) \rho_{N,\hbar}(X_{N}) dX_{N}\\
=& \frac{\n{\nabla F}_{L^{\infty}}^{2}}{N^{2\eta}} \lra{G_{1,N}(x_{1}-x_{2})\psi_{N,\hbar},\psi_{N,\hbar}},
\end{align*}
where
\begin{align*}
G_{1,N}(x_{1}-x_{2})=\int N^{\eta}|x_{1}-z|G_{0,N}(x_{1}-z)N^{\eta}|x_{2}-z|G_{0,N}(x_{2}-z)|dz.
\end{align*}
To bound $A_{12}$, we recall $\phi_{N,\hbar,12}=(1-w_{12})\psi_{N,\hbar}$ then get
\begin{align*}
A_{12}\lesssim& N^{-2\eta}\lra{G_{1,N}(x_{1}-x_{2})(1-w_{12})^{2}\phi_{N,\hbar,12},\phi_{N,\hbar,12}}\\
\lesssim&N^{-2\eta}\lra{G_{1,N}(x_{1}-x_{2})\phi_{N,\hbar,12},\phi_{N,\hbar,12}}\\
\lesssim& N^{-2\eta}\n{G_{1,N}}_{L^{1}}\lra{(1-\Delta_{1})(1-\Delta_{2})\phi_{N,\hbar,12},\phi_{N,\hbar,12}}
\lesssim N^{-2\eta}\hbar^{-4},
\end{align*}
where we discarded $(1-w_{12})^{2}$ in the second line and used the operator inequality \eqref{equ:Sobolev type estimate,3d,L1} in the second-to-last inequality, and the two-body $H^{1}$ energy bound \eqref{equ:H2 energy estimate,phi12} in the last inequality.

For $A_{2}$, we rewrite
\begin{align*}
A_{2}=& \n{F(G_{0,N}*\rho)-G_{0,N}*(F\rho)}_{L^{2}}^{2}\int \rho_{N,\hbar}(X_{N})dX_{N}\\
=&\n{F(G_{0,N}*\rho)-G_{0,N}*(F\rho)}_{L^{2}}^{2},
\end{align*}
where in the last inequality we used the mass conservation for $\rho_{N,\hbar}(X_{N})$. By the triangle, H\"{o}lder inequalities and Lemma \ref{lemma:quantative estimate for identity approximation} we get
\begin{align*}
A_{2}\leq&2\n{F(G_{0,N}*\rho)-F\rho}_{L^{2}}^{2}+2\n{F\rho-G_{0,N}*(F\rho)}_{L^{2}}^{2}\\
\lesssim&\n{F}_{L^{\infty}}^{2}\n{(G_{0,N}-\delta)*\rho}_{L^{2}}^{2}+\n{(G_{0,N}-\delta)*(F\rho)}_{L^{2}}^{2}\\
\lesssim&\frac{1}{N^{2\eta}}\n{F}_{L^{\infty}}^{2}\n{\lra{\nabla}\rho}_{L^{2}}^{2}+
\frac{1}{N^{2\eta}}\n{\lra{\nabla}(F\rho)}_{L^{2}}^{2}
\lesssim N^{-2\eta}.
\end{align*}

To sum up, we complete the estimates for the term $A$ and reach
\begin{align}\label{equ:functional inequality,div,A,finial}
A \leq \sqrt{A_{11}}+\sqrt{A_{12}}+\sqrt{A_{2}} \lesssim N^{\frac{\eta-1}{2}}+N^{-\eta}\hbar^{-2}\lesssim N^{-\eta}\hbar^{-2},
\end{align}
where in the last inequality we used that $N^{\frac{\eta-1}{2}}\leq N^{-\eta}$ for $\eta<\frac{1}{3}$.

For the term $B$, we rewrite
\begin{align*}
B=&\int F(z)G_{0,N}(x-z)G_{0,N}(z-y)\nu_{X_{N}}(dx)\nu_{X_{N}}(dy)dz \rho_{N,\hbar}(X_{N})dX_{N}\\
=&\int F(z)\abs{G_{0,N}*\nu_{X_{N}}(z)}^{2} \rho_{N,\hbar}(X_{N})dz dX_{N}.
\end{align*}
Observe that
\begin{align*}
\abs{G_{0,N}*\nu_{X_{N}}(z)}^{2} \rho_{N,\hbar}(X_{N})\geq 0
\end{align*}
for a.e. $(z,X_{N})\in \R^{3}\times \R^{3N}$.
Therefore, we can directly bound $F(z)$ and get
\begin{align}\label{equ:functional inequality,div,B,finial}
B\leq& \n{F}_{L^{\infty}}\int \abs{G_{0,N}*\nu_{X_{N}}(z)}^{2} \rho_{N,\hbar}(X_{N})dz dX_{N}\\
=&\n{F}_{L^{\infty}}\int  \int G_{0,N}(x-z)G_{0,N}(z-y)dz\nu_{X_{N}}(dx)\nu_{X_{N}}(dy) \rho_{N,\hbar}(X_{N})dX_{N}\notag\\
=&\n{F}_{L^{\infty}}\int  G_{N}(x-y)\nu_{X_{N}}(dx)\nu_{X_{N}}(dy) \rho_{N,\hbar}(X_{N})dX_{N}\notag\\
=&\n{F}_{L^{\infty}}\int G_{N}(x-y) \lrc{\frac{N-1}{N}\rho_{N,\hbar}^{(2)}(x,y)-\rho_{N,\hbar}^{(1)}(x)\rho(y)-\rho(x)\rho_{N,\hbar}^{(1)}(y)+\rho(x)\rho(y)}dxdy\notag\\
&+\n{F}_{L^{\infty}}\frac{G_{N}(0)}{N}\int \rho_{N,\hbar}^{(1)}(x)dx,\notag
\end{align}
where in the second-to-last equality we used the property \eqref{equ:G,square root}, and in the last equality we used the equation \eqref{equ:functional inequality,G term,rewritten form}.

With the approximation forms \eqref{equ:functional inequality,div,G term,rewritten form} and \eqref{equ:functional inequality,div,G term,main term}, we use estimates \eqref{equ:equ:functional inequality,diagonal term}, \eqref{equ:functional inequality,div,A,finial} for $A$ and \eqref{equ:functional inequality,div,B,finial} for $B$ to arrive at
\begin{align*}
&\int F(x) G_{N}(x-y)\lrc{\frac{N-1}{N}\rho_{N,\hbar}^{(2)}(x,y)-\rho_{N,\hbar}^{(1)}(x)\rho(y)-\rho(x)\rho_{N,\hbar}^{(1)}(y)+\rho(x)\rho(y)}dxdy\\
=&A+B+O(N^{3\eta-1})\\
\leq&\n{F}_{L^{\infty}}\int G_{N}(x-y) \lrc{\frac{N-1}{N}\rho_{N,\hbar}^{(2)}(x,y)-\rho_{N,\hbar}^{(1)}(x)\rho(y)-\rho(x)\rho_{N,\hbar}^{(1)}(y)+\rho(x)\rho(y)}dxdy\\
&+O(N^{-\eta}\hbar^{-2}+N^{3\eta-1}),
\end{align*}
which is the desired estimate \eqref{equ:functional inequality,fdelta,G}.
\end{proof}

To prove the lower bound estimate \eqref{equ:lower bound,fdelta} for $\mathcal{F}_{\delta}(t)$, we give the following estimate.
\begin{lemma}\label{lemma:two body,one body}
Let $\eta<\frac{1}{3}$ to be determined, we have
\begin{align}\label{equ:two body,one body}
&\int G_{N}(x-y) \lrc{\frac{N-1}{N}\rho_{N,\hbar}^{(2)}(x,y)-\rho_{N,\hbar}^{(1)}(x)\rho(y)-\rho(x)\rho_{N,\hbar}^{(1)}(y)+\rho(x)\rho(y)}dxdy\\
\geq &\int G_{N}(x-y) (\rho_{N,\hbar}^{(1)}(x)-\rho(x))(\rho_{N,\hbar}^{(1)}(y)-\rho(y))dxdy-O(N^{3\eta-1}).\notag
\end{align}
\end{lemma}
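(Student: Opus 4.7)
The plan is to reduce the claim to a single correlation inequality between $\rho_{N,\hbar}^{(2)}$ and the self-product $\rho_{N,\hbar}^{(1)}\otimes\rho_{N,\hbar}^{(1)}$, and then exploit the square-root Gaussian factorization of $G_N$ already used in Lemma \ref{lemma:functional inequality,fdelta}. Expanding $(\rho_{N,\hbar}^{(1)}(x)-\rho(x))(\rho_{N,\hbar}^{(1)}(y)-\rho(y))$ and subtracting from the left-hand-side integrand of \eqref{equ:two body,one body}, every term containing $\rho(x)$ or $\rho(y)$ cancels, and the claim becomes equivalent to
\begin{align*}
\int G_{N}(x-y)\lrc{\frac{N-1}{N}\rho_{N,\hbar}^{(2)}(x,y)-\rho_{N,\hbar}^{(1)}(x)\rho_{N,\hbar}^{(1)}(y)}dxdy\ \geq\ -O(N^{3\eta-1}).
\end{align*}

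Next I would rewrite both terms using $G_N(x-y)=\int G_{0,N}(x-z)G_{0,N}(z-y)dz$. By the bosonic symmetry of $\rho_{N,\hbar}(X_N)=|\psi_{N,\hbar}(X_N)|^{2}$, together with the identity $\rho_{N,\hbar}^{(2)}(x,y)=\int\frac{1}{N(N-1)}\sum_{i\neq j}\delta_{x_i}(x)\delta_{x_j}(y)\rho_{N,\hbar}(X_N)dX_N$, splitting the double sum $\sum_{i,j}=\sum_{i\neq j}+\sum_{i=j}$ gives
\begin{align*}
\int G_{N}(x-y)\frac{N-1}{N}\rho_{N,\hbar}^{(2)}(x,y)dxdy=\int\!\!\int \babs{\frac{1}{N}\sum_{i=1}^{N}G_{0,N}(x_{i}-z)}^{2}\rho_{N,\hbar}(X_N)\,dz\,dX_{N}-\frac{G_{N}(0)}{N},
\end{align*}
while the representation $\rho_{N,\hbar}^{(1)}(x)=\int\frac{1}{N}\sum_{i}\delta_{x_i}(x)\rho_{N,\hbar}(X_N)dX_N$ immediately yields
\begin{align*}
\int G_{N}(x-y)\rho_{N,\hbar}^{(1)}(x)\rho_{N,\hbar}^{(1)}(y)dxdy=\int\babs{G_{0,N}*\rho_{N,\hbar}^{(1)}(z)}^{2}dz.
\end{align*}

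Finally, applying Cauchy--Schwarz in the $X_N$ variable pointwise in $z$, with $\int \rho_{N,\hbar}(X_N)dX_N=1$, gives
\begin{align*}
\babs{G_{0,N}*\rho_{N,\hbar}^{(1)}(z)}^{2}=\babs{\int \frac{1}{N}\sum_{i=1}^{N}G_{0,N}(z-x_i)\rho_{N,\hbar}(X_N)dX_N}^{2}\leq \int \babs{\frac{1}{N}\sum_{i=1}^{N}G_{0,N}(z-x_i)}^{2}\rho_{N,\hbar}(X_N)dX_N.
\end{align*}
Integrating in $z$ and subtracting the two displays, the nonnegative quantum average cancels exactly against the classical square, and all that remains is the diagonal self-interaction $-G_N(0)/N$. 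Since $G_N(0)=N^{3\eta}G(0)\lesssim N^{3\eta}$, this is $-O(N^{3\eta-1})$, as required. There is no genuine obstacle here: the only delicate point is keeping track of the diagonal $i=j$ contribution $G_N(0)/N$, which is precisely the quantitative cost of the Cauchy--Schwarz step and which dictates the error rate in the statement.
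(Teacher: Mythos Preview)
Your proof is correct and follows essentially the same route as the paper: both reduce to showing $\int G_N(x-y)\bigl[\tfrac{N-1}{N}\rho_{N,\hbar}^{(2)}(x,y)-\rho_{N,\hbar}^{(1)}(x)\rho_{N,\hbar}^{(1)}(y)\bigr]dxdy\geq -O(N^{3\eta-1})$, and both use the Gaussian square-root factorization to expose a nonnegative term minus the diagonal $G_N(0)/N$. The only cosmetic difference is that the paper packages your Cauchy--Schwarz step as the nonnegativity of $\int|G_{0,N}*\mu_{X_N}(z)|^{2}\rho_{N,\hbar}(X_N)\,dz\,dX_N$ with $\mu_{X_N}=\tfrac{1}{N}\sum_i\delta_{x_i}-\rho_{N,\hbar}^{(1)}$, which is the variance identity underlying your Jensen/Cauchy--Schwarz inequality.
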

\begin{proof}
We decompose
\begin{align*}
&\int G_{N}(x-y) \lrc{\frac{N-1}{N}\rho_{N,\hbar}^{(2)}(x,y)-\rho_{N,\hbar}^{(1)}(x)\rho(y)-\rho(x)\rho_{N,\hbar}^{(1)}(y)+\rho(x)\rho(y)}dxdy\\
=&I+II,
\end{align*}
where
\begin{align}
I=&\int G_{N}(x-y) (\rho_{N,\hbar}^{(1)}(x)-\rho(x))(\rho_{N,\hbar}^{(1)}(y)-\rho(y))dxdy,\\
II=&\int G_{N}(x-y)\lrc{\frac{N-1}{N}\rho_{N,\hbar}^{(2)}(x,y)-\rho_{N,\hbar}^{(1)}(x)\rho_{N,\hbar}^{(1)}(y)}dxdy.
\end{align}
It suffices to prove a lower bound of the term $II$. By the symmetry of the density function $\rho_{N,\hbar}(X_{N})$, we rewrite
\begin{align*}
II=&\int G_{N}(x-y)\lrc{\frac{N-1}{N}\rho_{N,\hbar}^{(2)}(x,y)-\rho_{N,\hbar}^{(1)}(x)\rho_{N,\hbar}^{(1)}(y)}dxdy\\
=&\int G_{N}(x-y)\lrc{\frac{1}{N^{2}}\sum_{i\neq j}^{N}\delta_{x_{i}}(x)\delta_{x_{j}}(y)+\rho_{N,\hbar}^{(1)}(x)\rho_{N,\hbar}^{(1)}(y)\right.\\
&\left.-\frac{1}{N}\sum_{i=1}^{N}\delta_{x_{i}}(x)\rho_{N,\hbar}^{(1)}(y)-\rho_{N,\hbar}^{(1)}(x)\frac{1}{N}\sum_{j=1}^{N}\delta_{x_{j}}(y)}dxdy \rho_{N,\hbar}(X_{N})dX_{N}\\
=&\int G_{N}(x-y)\mu_{X_{N}}(dx)\mu_{X_{N}}(dy)\rho_{N,\hbar}(X_{N})dX_{N}-\frac{G_{N}(0)}{N}\int \rho_{N,\hbar}^{(1)}(x)dx,
\end{align*}
where
\begin{align*}
\mu_{X_{N}}(dx)=\frac{1}{N}\sum_{i=1}^{N}\delta_{x_{i}}(dx)-\rho_{N,\hbar}^{(1)}(x)dx.
\end{align*}
Then by \eqref{equ:G,square root} and \eqref{equ:equ:functional inequality,diagonal term}, we obtain
\begin{align*}
II=&\int \abs{G_{0,N}*\mu_{X_{N}}(z)}^{2} \rho_{N,\hbar}(X_{N})dzdX_{N}-\frac{G_{N}(0)}{N}\int \rho_{N,\hbar}^{(1)}(x)dx
\gtrsim- N^{3\eta-1},
\end{align*}
which completes the proof of estimate \eqref{equ:two body,one body}.
\end{proof}

To the end, we get into the proof of Proposition \ref{lemma:functional inequality,fdelta,V}.
\begin{proof}[\textbf{Proof of Proposition $\ref{lemma:functional inequality,fdelta,V}$}]
For estimate \eqref{equ:functional inequality,fdelta}, the approximation \eqref{equ:replacement,F=div,Fdelta} of $\wt{\mathcal{F}}_{\delta}(t)$ in Lemma \ref{lemma:replacement argument} gives
\begin{align*}
\wt{\mathcal{F}}_{\delta}=&-b_{0}\int \operatorname{div} u(x) G_{N}(x-y)
 \lrc{\frac{N-1}{N}\rho_{N,\hbar}^{(2)}(x,y)\right.\\
& \left. \quad \quad   -\rho_{N,\hbar}^{(1)}(x)\rho(y)-\rho(x)\rho_{N,\hbar}^{(1)}(y)+\rho(x)\rho(y)}dxdy \notag\\
&\pm O(N^{\be-1}\hbar^{-6} +N^{-\frac{\beta}{3}}\hbar^{-4}+N^{-\frac{\eta}{3}}\hbar^{-4}).
\end{align*}
Then by the functional inequality \eqref{equ:functional inequality,fdelta,G} in Lemma \ref{lemma:functional inequality,fdelta}, we get
\begin{align*}
\wt{\mathcal{F}}_{\delta}\leq& \n{\operatorname{div} u}_{L^{\infty}}\int G_{N}(x-y) \lrc{\frac{N-1}{N}\rho_{N,\hbar}^{(2)}(x,y)
\right.\\
&\left.-\rho_{N,\hbar}^{(1)}(x)\rho(y)-\rho(x)\rho_{N,\hbar}^{(1)}(y)+\rho(x)\rho(y)}dxdy\\
&+O(N^{\be-1}\hbar^{-6} +N^{-\frac{\beta}{3}}\hbar^{-4}+N^{-\frac{\eta}{3}}\hbar^{-4}+N^{-\eta}\hbar^{-2}+N^{3\eta-1}).
\end{align*}
Using the approximation \eqref{equ:replacement,F=1,Fdelta} of $\mathcal{F}_{\delta}$ in Lemma \ref{lemma:replacement argument}, we arrive at
\begin{align*}
\wt{\mathcal{F}}_{\delta}\leq& \n{\operatorname{div}u}_{L^{\infty}}\mathcal{F}_{\delta}+O(N^{\be-1}\hbar^{-6} +N^{-\frac{\beta}{3}}\hbar^{-4}+N^{-\frac{\eta}{3}}\hbar^{-4}+N^{-\eta}\hbar^{-2}+N^{3\eta-1})\\
\lesssim& \mathcal{F}_{\delta}+O(N^{\be-1}\hbar^{-6} +N^{-\frac{\beta}{3}}\hbar^{-4}+N^{-\frac{1}{10}}\hbar^{-4}),
\end{align*}
where in the last inequality we took $\eta=\frac{3}{10}$. Therefore, we complete the proof of the estimate \eqref{equ:functional inequality,fdelta}.

For the lower bound estimate \eqref{equ:lower bound,fdelta} on $\mathcal{F}_{\delta}$, we use the approximation \eqref{equ:replacement,F=1,Fdelta} of $\mathcal{F}_{\delta}$
 and estimate \eqref{equ:two body,one body} to obtain
\begin{align}
\mathcal{F}_{\delta}=&b_{0}\int G_{N}(x-y) \lrc{\frac{N-1}{N}\rho_{N,\hbar}^{(2)}(x,y)\right.\notag\\
&\quad \quad \left.-\rho_{N,\hbar}^{(1)}(x)\rho(y)-\rho(x)\rho_{N,\hbar}^{(1)}(y)+\rho(x)\rho(y)}dxdy\notag\\
&\pm O(N^{\be-1}\hbar^{-6} +N^{-\frac{\beta}{3}}\hbar^{-4}+N^{-\frac{\eta}{3}}\hbar^{-4})\notag\\
\geq&\int G_{N}(x-y) (\rho_{N,\hbar}^{(1)}(x)-\rho(x))(\rho_{N,\hbar}^{(1)}(y)-\rho(y))dxdy\label{equ:lower bound,fdelta,positive term}\\
&- O(N^{\be-1}\hbar^{-6} +N^{-\frac{\beta}{3}}\hbar^{-4}+N^{-\frac{\eta}{3}}\hbar^{-4}+N^{3\eta-1}).\notag
\end{align}
By \eqref{equ:G,square root}, we observe that the term on the r.h.s of \eqref{equ:lower bound,fdelta,positive term}
\begin{align*}
&\int G_{N}(x-y) (\rho_{N,\hbar}^{(1)}(x)-\rho(x))(\rho_{N,\hbar}^{(1)}(y)-\rho(y))dxdy\\
=&\int |G_{N,0}*(\rho_{N,\hbar}^{(1)}-\rho)(z)|^{2}dz\geq 0.
\end{align*}
Thus, we can discard this positive term and then take $\eta=\frac{3}{10}$ to get
\begin{align*}
\mathcal{F}_{\delta}\geq -O(N^{\be-1}\hbar^{-6} +N^{-\frac{\beta}{3}}\hbar^{-4}+N^{-\frac{1}{10}}\hbar^{-4}),
\end{align*}
which is the lower bound estimate \eqref{equ:lower bound,fdelta}.
\end{proof}

\section{Quantitative Strong Convergence of Quantum Densities}\label{section:Concluding the Strong Convergence of Quantum Densities}
In the section, using functional inequalities, we
prove the Gronwall's inequality for the modulated energy. Subsequently, with the quantitative convergence rate of the modulated energy, we further conclude the quantitative strong convergence of quantum mass and momentum densities. Notably, the $\delta$-type potential part is crucial in upgrading to the quantitative strong convergence, that is, in the case of only the Coulomb potential, one cannot deduce the strong convergence here.

Recall the modulated energy
\begin{align}
\mathcal{M}(t)=\mathcal{M}_{K}(t)+\mathcal{M}_{P}(t),
\end{align}
where the kinetic energy part is
\begin{align}
\mathcal{M}_{K}(t)=\int_{\mathbb{R}^{3N}}\vert\left(i \hbar \nabla_{x_{1}}-u(t,x_{1})\right)\psi_{N,\hbar}(t,X_N)\vert^2dX_N,
\end{align}
and the potential energy part is
\begin{align}
\mathcal{M}_{P}(t)=\mathcal{F}_{\delta}(t)
+\mathcal{F}_{c}(t).
\end{align}
From lower bound estimates \eqref{equ:lower bound,fdelta} on $\mathcal{F}_{\delta}(t)$ and \eqref{equ:lower bound,fc} on $\mathcal{F}_{c}(t)$,
we can add a small compensation such that
\begin{align}
\mathcal{F}_{\delta}(t)+\mathcal{F}_{c}(t)+r(N,\hbar)\geq 0,
\end{align}
where $r(N,\hbar)=C(N^{\be-1}\hbar^{-6}+N^{-\frac{\be}{3}}\hbar^{-4}+N^{-\frac{1}{10}}\hbar^{-4}).$
Thus, we introduce the positive modulated energy
\begin{align}\label{equ:modulated energy,modified}
\mathcal{M}^{+}(t)=&\mathcal{M}(t)+2r(N,\hbar) \geq r(N,\hbar)\geq 0.
\end{align}

We now provide a closed estimate for the positive modulated energy.
 \begin{proposition}
 For $t\in[0,T_{0}]$,
we have the differential inequality
\begin{align}\label{equ:upper bound for the evolution of modulated energy}
&\frac{d}{dt}\mathcal{M}^{+}(t)
\lesssim \mathcal{M}^{+}(t)+
\hbar^{2}.
\end{align}
Moreover, we conclude
\begin{equation}\label{equ:convergence of kinetic energy}
\int_{\mathbb{R}^{3N}}\vert\left(i \hbar \nabla_{x_{1}}-u(t,x_{1})\right)\psi_{N,\hbar}(t,X_N)\vert^2dX_N\lesssim
\mathcal{M}^{+}(0)+\hbar^{2},
\end{equation}
and
\begin{equation}\label{equ:convergence of mass density}
\int G_{N}(x-y) (\rho_{N,\hbar}^{(1)}(x)-\rho(x))(\rho_{N,\hbar}^{(1)}(y)-\rho(y))dxdy\lesssim \mathcal{M}^{+}(0)+\hbar^{2}.
\end{equation}

 \end{proposition}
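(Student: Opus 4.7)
The plan is to plug the functional inequalities for $\wt{\mathcal{F}}_{\delta}(t)$ and $\wt{\mathcal{F}}_{c}(t)$ into the evolution identity from Proposition \ref{prop:the evolution of the modulated energy}, combine them with a direct bound on the kinetic contribution $\wt{\mathcal{M}}_{K}(t)$, and close the loop via Gronwall's lemma applied not to $\mathcal{M}(t)$ itself but to the positive modulated energy $\mathcal{M}^{+}(t)$. The point of passing to $\mathcal{M}^{+}(t)$ is that $\mathcal{F}_{\delta}(t)$ and $\mathcal{F}_{c}(t)$ are individually sign-indefinite, and this is the main obstacle to a naive Gronwall closure.

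For the kinetic contribution, Cauchy--Schwarz on the $(\partial_{j}u^{k}+\partial_{k}u^{j})$-weighted sum in $\wt{\mathcal{M}}_{K}(t)$ gives a bound by $\|\nabla u\|_{L_{x}^{\infty}}\mathcal{M}_{K}(t)$; the remaining piece $\frac{\hbar^{2}}{2}\int \Delta(\operatorname{div} u)\rho_{N,\hbar}^{(1)}$ is dominated by $\hbar^{2}\|\Delta\operatorname{div} u\|_{L_{x}^{\infty}}\|\rho_{N,\hbar}^{(1)}\|_{L^{1}}\lesssim \hbar^{2}$, since $\rho_{N,\hbar}^{(1)}$ has unit mass and $u\in C([0,T_{0}];H^{5})$ controls $\Delta\operatorname{div} u$ in $L^{\infty}$ via Sobolev embedding. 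Applying Proposition \ref{lemma:functional inequality,fdelta,V} and the Golse--Paul estimate \eqref{equ:functional inequality,fc}, I then obtain
\begin{align*}
\frac{d}{dt}\mathcal{M}(t) \lesssim \mathcal{M}_{K}(t) + \mathcal{F}_{\delta}(t) + \mathcal{F}_{c}(t) + \hbar^{2} + r(N,\hbar).
\end{align*}

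To close this, I invoke the two lower bounds \eqref{equ:lower bound,fdelta} and \eqref{equ:lower bound,fc}, which (after absorbing $CN^{-2/3}$ into $r(N,\hbar)$) guarantee that $\mathcal{F}_{\delta}(t)+r(N,\hbar)\geq 0$ and $\mathcal{F}_{c}(t)+r(N,\hbar)\geq 0$. Hence every summand on the right-hand side is controlled by $\mathcal{M}^{+}(t)=\mathcal{M}(t)+2r(N,\hbar)\geq 0$, producing $\frac{d}{dt}\mathcal{M}^{+}(t)\lesssim \mathcal{M}^{+}(t)+\hbar^{2}$, which is \eqref{equ:upper bound for the evolution of modulated energy}. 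Since $r(N,\hbar)$ is time-independent, adding it to $\mathcal{M}(t)$ does not affect the derivative, and Gronwall on $[0,T_{0}]$ yields $\mathcal{M}^{+}(t)\lesssim \mathcal{M}^{+}(0)+\hbar^{2}$.

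Finally, \eqref{equ:convergence of kinetic energy} follows at once from $\mathcal{M}_{K}(t)=\mathcal{M}(t)-\mathcal{F}_{\delta}(t)-\mathcal{F}_{c}(t)\leq \mathcal{M}^{+}(t)$, using the same two lower bounds. For \eqref{equ:convergence of mass density}, I return to the chain of inequalities in the proof of \eqref{equ:lower bound,fdelta}: combining the approximation \eqref{equ:replacement,F=1,Fdelta} with \eqref{equ:two body,one body} shows that, up to an error of order $r(N,\hbar)$, $\mathcal{F}_{\delta}(t)$ bounds from below the quadratic form $\int G_{N}(x-y)(\rho_{N,\hbar}^{(1)}-\rho)(x)(\rho_{N,\hbar}^{(1)}-\rho)(y)\,dx\,dy$, which is itself non-negative as $\|G_{0,N}*(\rho_{N,\hbar}^{(1)}-\rho)\|_{L^{2}}^{2}$. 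Together with $\mathcal{F}_{c}(t)\geq -CN^{-2/3}$ and $\mathcal{M}_{K}(t)\geq 0$, this pins the quadratic form below $\mathcal{M}^{+}(t)\lesssim \mathcal{M}^{+}(0)+\hbar^{2}$. The only subtlety is ensuring the $r(N,\hbar)$ corrections are not double-counted, which is handled cleanly by the definition of $\mathcal{M}^{+}(t)$ and the fact that the $\delta$-piece alone supplies the non-negative convolution square, so removing the Coulomb part of the modulated energy from consideration costs at most $O(N^{-2/3})$, absorbed into $r(N,\hbar)$.
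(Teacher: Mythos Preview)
Your proposal is correct and follows essentially the same route as the paper: bound $\wt{\mathcal{M}}_{K}(t)$ by $\|\nabla u\|_{L^{\infty}}\mathcal{M}_{K}(t)+C\hbar^{2}$, feed in the functional inequalities \eqref{equ:functional inequality,fdelta} and \eqref{equ:functional inequality,fc}, use the lower bounds \eqref{equ:lower bound,fdelta}, \eqref{equ:lower bound,fc} to pass to the nonnegative $\mathcal{M}^{+}(t)$, close by Gronwall, and then extract \eqref{equ:convergence of kinetic energy} and \eqref{equ:convergence of mass density} from $\mathcal{M}_{K}(t)\leq \mathcal{M}^{+}(t)$ and from the combination of \eqref{equ:replacement,F=1,Fdelta} with \eqref{equ:two body,one body} respectively. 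The paper proceeds identically (modulo a visible typo in its proof of \eqref{equ:convergence of mass density}, where $\wt{\mathcal{F}}_{c}(t)$ should read $\mathcal{F}_{\delta}(t)$).
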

 \begin{proof}
From the evolution of the modulated energy \eqref{equ:evolution of modulated energy}, we find that
\begin{align*}
&\frac{d}{dt}\mathcal{M}^{+}(t)\\
=& -\sum_{j,k=1}^{3}\int_{\mathbb{R}^{3N}}\lrs{\pa_{j}u^{k}+\pa_{k}u^{j}}(-i\hbar\partial_j\psi_{N,\hbar}-u^{j}\psi_{N,\hbar})
\overline{(-i\hbar\partial_k\psi_{N,\hbar}-u^{k}\psi_{N,\hbar})}dX_N\\
 &+\frac{\hbar^2}{2}\int_{\mathbb{R}^{3}}\Delta(\operatorname{div} u)(t,x_{1})\rho_{N,\hbar}^{(1)}(t,x_{1})dx_{1}
 +\wt{\mathcal{F}}_{\delta}(t)+
 \wt{\mathcal{F}}_{c}(t)\\
\lesssim &\n{\nabla u}_{L^{\infty}}\int_{\R^{3N}}|(i\hbar \nabla_{x_{1}}-u)\psi_{N,\hbar}|^{2}dX_{N}
+\hbar^{2}\n{\psi_{N,\hbar}}_{L^{2}}^{2}\n{\Delta \operatorname{div}u}_{L^{\infty}}+\wt{\mathcal{F}}_{\delta}(t)+
 \wt{\mathcal{F}}_{c}(t).
\end{align*}
By the functional inequalities $(\ref{equ:functional inequality,fdelta})$ on $\wt{\mathcal{F}}_{\delta}(t)$ and \eqref{equ:functional inequality,fc} on $\wt{\mathcal{F}}_{c}(t)$, we get
\begin{align}
\frac{d}{dt}\mathcal{M}^{+}(t)
\lesssim \mathcal{M}^{+}(t)+\hbar^{2}.
\end{align}
Then by Gronwall's inequality, we arrive at
\begin{align}\label{equ:modulated energy,estimate,final}
\mathcal{M}^{+}(t)\leq \exp(CT_{0})
\lrs{\mathcal{M}^{+}(0)+
\hbar^{2}t}
\lesssim& \mathcal{M}^{+}(0)+\hbar^{2}
\end{align}
for $t\in[0,T_{0}]$.

For the kinetic energy estimate \eqref{equ:convergence of kinetic energy}, by \eqref{equ:modulated energy,modified} and \eqref{equ:modulated energy,estimate,final} we have that
\begin{align*}
\int_{\mathbb{R}^{3N}}\vert\left(i \hbar \nabla_{x_{1}}-u(t,x_{1})\right)\psi_{N,\hbar}(t,X_N)\vert^2dX_N\leq  & \mathcal{M}^{+}(t)
\lesssim \mathcal{M}^{+}(0)+\hbar^{2},
\end{align*}
which completes the proof of \eqref{equ:convergence of kinetic energy}.

For the potential energy estimate \eqref{equ:convergence of mass density}, by \eqref{equ:two body,one body} in Lemma \ref{lemma:two body,one body} and \eqref{equ:replacement,F=1,Fdelta} in Lemma \ref{lemma:replacement argument}, we have
\begin{align*}
\int G_{N}(x-y) (\rho_{N,\hbar}^{(1)}(t,x)-\rho(t,x))(\rho_{N,\hbar}^{(1)}(t,y)-\rho(t,y))dxdy
\leq& \wt{\mathcal{F}}_{c}(t)+ r(N,\hbar)\\
\leq& \mathcal{M}^{+}(t)+ r(N,\hbar).
\end{align*}
Again by \eqref{equ:modulated energy,estimate,final}, we arrive at \eqref{equ:convergence of mass density}.
 \end{proof}

To the end, we get into the proof of Theorem $\ref{thm:main theorem}$.
\begin{proof}[\textbf{Proof of Theorem $\ref{thm:main theorem}$}]

\begin{flushleft}
\textbf{Convergence of the mass density $\rho_{N,\hbar}^{(1)}(t)$.}
\end{flushleft}

We decompose
\begin{align}
&\int G_{N}(x-y) (\rho_{N,\hbar}^{(1)}(t,x)-\rho(t,x))(\rho_{N,\hbar}^{(1)}(t,y)-\rho(t,y))dxdy\notag\\
=&\lra{(G_{N}-\delta)*(\rho_{N,\hbar}^{(1)}(t)-\rho(t)),\rho_{N,\hbar}^{(1)}(t)-\rho(t)}+\n{\rho_{N,\hbar}^{(1)}(t)-\rho(t)}_{L^{2}}^{2}. \label{equ:error term,potential part}
\end{align}

For the first term on the r.h.s of \eqref{equ:error term,potential part}, we use H\"{o}lder inequality and Lemma \ref{lemma:quantative estimate for identity approximation} to obtain
\begin{align*}
&\lra{(G_{N}-\delta)*(\rho_{N,\hbar}^{(1)}(t)-\rho(t)),\rho_{N,\hbar}^{(1)}(t)-\rho(t)}\\
\leq&  \n{(G_{N}-\delta)*(\rho_{N,\hbar}^{(1)}(t)-\rho(t))}_{L^{\frac{3}{2}}}\n{\rho_{N,\hbar}^{(1)}(t)-\rho(t)}_{L^{3}}\\
\lesssim & N^{-\eta}\lrs{\n{\lra{\nabla}\rho_{N,\hbar}^{(1)}(t)}_{L^{\frac{3}{2}}}+\n{\rho(t)}_{L^{\frac{3}{2}}}}\lrs{\n{\rho_{N,\hbar}^{(1)}(t)}_{L^{3}}
+\n{\rho(t)}_{L^{3}}}.
\end{align*}
Next, we estimate the terms $\n{\lra{\nabla}\rho_{N,\hbar}^{(1)}(t)}_{L^{\frac{3}{2}}}$ and $\n{\rho_{N,\hbar}^{(1)}(t)}_{L^{3}}$. By the Calder\'{o}n-Zygmund theory which implies that $\n{\lra{\nabla} f}_{L^{p}}\lesssim \n{\nabla f}_{L^{p}}+\n{f}_{L^{p}}$ for $1<p<\infty$, we get
\begin{align*}
\n{\lra{\nabla}\rho_{N,\hbar}^{(1)}(t)}_{L^{\frac{3}{2}}}\lesssim \n{\nabla\rho_{N,\hbar}^{(1)}(t)}_{L^{\frac{3}{2}}}
+\n{\rho_{N,\hbar}^{(1)}(t)}_{L^{\frac{3}{2}}}.
\end{align*}
By the Leibniz rule, Minkowski, H\"{o}lder, and Sobolev inequalities, we then obtain
\begin{align*}
\n{\nabla\rho_{N,\hbar}^{(1)}(t)}_{L^{\frac{3}{2}}}
\lesssim \n{\nabla_{x_{1}}\psi_{N,\hbar}(t)}_{L^{2}}\n{\psi_{N,\hbar}(t)}_{L^{2}L_{x_{1}}^{6}}\lesssim \n{\lra{\nabla_{x_{1}}}\psi_{N,\hbar}(t)}_{L^{2}}^{2}\lesssim \hbar^{-2},
\end{align*}
where in the last inequality we have used the $H^{1}$ energy bound \eqref{equ:energy conservation law} for $\psi_{N,\hbar}$.
Similarly, we also have
\begin{align*}
\n{\rho_{N,\hbar}^{(1)}(t)}_{L^{\frac{3}{2}}}\lesssim \n{\psi_{N,\hbar}(t)}_{L^{2}}\n{\psi_{N,\hbar}(t)}_{L^{2}L_{x_{1}}^{6}}\lesssim \n{\psi_{N,\hbar}(t)}_{L^{2}} \n{\lra{\nabla_{x_{1}}}\psi_{N,\hbar}(t)}_{L^{2}}\lesssim \hbar^{-1},
\end{align*}
and
\begin{align*}
\n{\rho_{N,\hbar}^{(1)}(t)}_{L^{3}}\leq \n{\psi_{N,\hbar}(t)}_{L^{2}L_{x_{1}}^{6}}^{2}\lesssim \n{\lra{\nabla_{x_{1}}}\psi_{N,\hbar}(t)}_{L^{2}}^{2}\lesssim \hbar^{-2}.
\end{align*}

With $\eta=\frac{3}{10}$, these bounds give that
\begin{align}\label{equ:error term,potential part,final}
\lra{(G_{N}-\delta)*(\rho_{N,\hbar}^{(1)}(t)-\rho(t)),\rho_{N,\hbar}^{(1)}(t)-\rho(t)}\lesssim N^{-\frac{3}{10}}\hbar^{-4}\lesssim r(N,\hbar).
\end{align}
Thus, combining \eqref{equ:error term,potential part}, \eqref{equ:error term,potential part,final} with \eqref{equ:convergence of mass density}, we arrive at
\begin{align}\label{equ:convergence of mass density,L2}
\n{\rho_{N,\hbar}^{(1)}(t)-\rho(t)}_{L^{2}}^{2}\lesssim &r(N,\hbar)+\mathcal{M}^{+}(0)+\hbar^{2}
\lesssim \mathcal{M}^{+}(0)+\hbar^{2},
\end{align}
where in the last inequality we have used that $r(N,\hbar)\leq \mathcal{M}^{+}(0)$.
\begin{flushleft}
\textbf{Convergence of the momentum density $J_{N,\hbar}^{(1)}(t)$.}
\end{flushleft}

Recall the momentum density
\begin{align*}
J_{N,\hbar}^{(1)}(t,x_{1})
=\hbar\int \operatorname{Im}(\overline{\psi_{N,\hbar}}  \nabla_{x_{1}}\psi_{N,\hbar})(t,X_N)dx_{2}\ccc dx_{N}.
\end{align*}
Then by the triangle and H\"{o}lder's inequalities, we have
\begin{align*}
&\n{J_{N,\hbar}^{(1)}(t)-(\rho u)(t)}_{L^{1}}\\
\leq &\n{J_{N,\hbar}^{(1)}(t)-(\rho_{N,\hbar}^{(1)}u)(t)}_{L^{1}}+\n{(\rho_{N,\hbar}^{(1)}u)(t)-(\rho u)(t)}_{L^{1}}\\
=&\n{\operatorname{Im}\lrs{\ol{\psi_{N,\hbar}(t)}\lrs{\hbar\nabla_{x_{1}} -iu(t)}\psi_{N,\hbar}(t)}}_{L^{1}}+\n{(\rho_{N,\hbar}^{(1)}u)(t)-(\rho u)(t)}_{L^{1}}\\
\leq& \n{\psi_{N,\hbar}(t)}_{L^{2}}\n{(i\hbar\nabla_{x_{1}}-u(t))\psi_{N,\hbar}(t)}_{L^{2}}+\n{u(t)}_{L^{2}}
\n{\rho_{N,\hbar}^{(1)}(t)-\rho(t)}_{L^{2}}\\
\lesssim& \mathcal{M}^{+}(0)+\hbar^{2},
\end{align*}
where in the last inequality we used the mass conservation, estimates $(\ref{equ:convergence of kinetic energy})$ and $(\ref{equ:convergence of mass density,L2})$.
\end{proof}

\noindent
\textbf{Acknowledgements.}
 X. Chen was supported in part by NSF grant DMS-2005469 and a Simons fellowship numbered 916862, S. Shen was supported in part by the Postdoctoral Science Foundation of China under Grant 2022M720263, and Z. Zhang was supported in part by NSF of China under Grant 12171010 and 12288101.

\appendix

\section{Sobolev Type Estimates}\label{section:Sobolev Type Estimates}
\begin{lemma}[\cite{CH22quantitative}, Lemma A.5]\label{lemma:quantative estimate for identity approximation}
Let $d=3$ and $W_{N}(x)=N^{3\be}V(N^{\be}x)-b_{0}\delta$, where $b_{0}=\int V(x)dx$. For any $0\leq s\leq 1$,
\begin{align}
\n{W_{N}*f}_{L^{p}}\leq C\n{\lra{x}V(x)}_{L^{1}} N^{-\be s}\n{\lra{\nabla}^{s}f}_{L^{p}}
\end{align}
for any $1<p<\wq$.
\end{lemma}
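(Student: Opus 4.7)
The plan is to establish the estimate by first proving the endpoint cases $s=0$ and $s=1$ and then interpolating. The algebraic feature that drives everything is the vanishing mean $\int W_{N}(x)\,dx=0$, which is built into the definition via $b_{0}=\int V$.

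First, I would treat $s=0$. By Young's convolution inequality applied to $N^{3\be}V(N^{\be}\cdot)$ (whose $L^{1}$ norm is scaling-invariant and equal to $\n{V}_{L^{1}}$), together with the triangle inequality and the crude bound $|b_{0}|\leq \n{V}_{L^{1}}$,
\begin{align*}
\n{W_{N}*f}_{L^{p}}\leq \n{V}_{L^{1}}\n{f}_{L^{p}}+|b_{0}|\n{f}_{L^{p}}\leq 2\n{\lra{x}V(x)}_{L^{1}}\n{f}_{L^{p}}.
\end{align*}

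Next, I would turn to $s=1$, where the cancellation is essential. Using $\int V(z)\,dz=b_{0}$ and the change of variables $z=N^{\be}y$, I can rewrite
\begin{align*}
W_{N}*f(x)=\int V(z)\bigl[f(x-N^{-\be}z)-f(x)\bigr]\,dz,
\end{align*}
and then apply the fundamental theorem of calculus to the bracket, producing the gain
\begin{align*}
f(x-N^{-\be}z)-f(x)=-N^{-\be}\int_{0}^{1}z\cdot \nabla f(x-tN^{-\be}z)\,dt.
\end{align*}
Minkowski's integral inequality in $(t,z)$ together with translation invariance of $\n{\cdot}_{L^{p}}$ yields
\begin{align*}
\n{W_{N}*f}_{L^{p}}\leq N^{-\be}\n{|z|V(z)}_{L^{1}}\n{\nabla f}_{L^{p}}\leq CN^{-\be}\n{\lra{x}V(x)}_{L^{1}}\n{\lra{\nabla}f}_{L^{p}},
\end{align*}
where in the last step I would invoke Calder\'on--Zygmund theory (the Fourier multiplier $\nabla\lra{\nabla}^{-1}$ is bounded on $L^{p}$ for $1<p<\infty$) to pass from $\n{\nabla f}_{L^{p}}$ to $\n{\lra{\nabla}f}_{L^{p}}$.

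Finally, for $s\in(0,1)$, I would interpolate the two endpoint bounds. Let $T_{N}$ denote the linear operator $f\mapsto W_{N}*f$. The endpoint estimates show $T_{N}:L^{p}\to L^{p}$ with norm $\lesssim \n{\lra{x}V(x)}_{L^{1}}$ and $T_{N}:W^{1,p}\to L^{p}$ with norm $\lesssim N^{-\be}\n{\lra{x}V(x)}_{L^{1}}$. Applying Stein's complex interpolation theorem to the analytic family $z\mapsto T_{N}\circ \lra{\nabla}^{-z}$ on the strip $0\leq \operatorname{Re}z\leq 1$ (equivalently, using the identification $W^{s,p}=[L^{p},W^{1,p}]_{s}$), I obtain the intermediate bound with operator norm $\lesssim N^{-\be s}\n{\lra{x}V(x)}_{L^{1}}$, which is precisely the claim. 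There is no genuine obstacle here: the only care required is in the Calder\'on--Zygmund step and the bookkeeping of the interpolation scale, since the essential cancellation $\int W_{N}=0$ and the scaling factor $N^{-\be}$ (coming from the support size of $V_{N}$) are transparent.
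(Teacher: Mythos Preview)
Your argument is correct. The paper does not supply its own proof of this lemma; it merely cites \cite{CH22quantitative}, Lemma~A.5, so there is no in-paper proof to compare against. Your endpoint-plus-interpolation scheme is the standard route: the $s=0$ case via Young's inequality, the $s=1$ case via the mean-zero cancellation and the fundamental theorem of calculus, and then complex interpolation on the Bessel potential scale $H^{s,p}=[L^{p},W^{1,p}]_{s}$ for $1<p<\infty$. Each step is sound, and the Mikhlin/Calder\'on--Zygmund input you invoke to pass from $\n{\nabla f}_{L^{p}}$ to $\n{\lra{\nabla}f}_{L^{p}}$ is exactly what is needed.
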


\begin{lemma}[\cite{ESY07}, Lemma A.3]\label{lemma:Sobolev type estimate,3d}
Let $d=3$ and $V_{N}(x)=N^{3\be}V(N^{\be}x)$. Then
\begin{align}
&V_{N}(x_{1}-x_{2})\leq C \n{V}_{L^{1}}(1-\Delta_{x_{1}})
(1-\Delta_{x_{2}}),\label{equ:Sobolev type estimate,3d,L1}\\
&V_{N}(x_{1}-x_{2})\leq C N^{\be}\n{V}_{L^{\frac{3}{2}}}(1-\Delta_{x_{1}})\label{equ:Sobolev type estimate,3d,L2},\\
&V_{N}(x_{1}-x_{2})\leq C N^{3\be}\n{V}_{L^{\wq}}.\label{equ:Sobolev type estimate,3d,Linffty}
\end{align}
\end{lemma}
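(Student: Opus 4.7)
The plan is to prove the three operator inequalities in order of difficulty, from easy to hard, all on $L^2(\mathbb{R}^{3N})$ (treating the other variables as inert parameters).

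First, the $L^\infty$ bound \eqref{equ:Sobolev type estimate,3d,Linffty} is the simplest and requires only the scaling identity $\|V_N\|_{L^\infty} = N^{3\beta}\|V\|_{L^\infty}$, yielding the pointwise (hence operator) upper bound immediately.

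Next, for \eqref{equ:Sobolev type estimate,3d,L2}, I will work with the associated quadratic form. Fixing $x_2$ and applying H\"older in $x_1$ with exponents $3/2$ and $3$,
\begin{equation*}
\int V_N(x_1-x_2)|\psi|^2\, dx_1 \leq \|V_N(\cdot-x_2)\|_{L^{3/2}_{x_1}}\,\|\psi(\cdot,x_2)\|_{L^6_{x_1}}^2,
\end{equation*}
then using the 3D Sobolev embedding $H^1 \hookrightarrow L^6$ to convert $L^6_{x_1}$ into $\langle(1-\Delta_{x_1})\psi,\psi\rangle_{L^2_{x_1}}$, and finally integrating in $x_2$. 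Combining with the scaling $\|V_N\|_{L^{3/2}} = N^\beta\|V\|_{L^{3/2}}$ gives the stated constant. This step is routine.

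The main obstacle is \eqref{equ:Sobolev type estimate,3d,L1}, which requires splitting one derivative onto each of the two variables, with only an $L^1$-norm of $V$. My plan is to change variables $z=x_1-x_2$, $u=x_1$ and write
\begin{equation*}
\int V_N(x_1-x_2)|\psi(x_1,x_2)|^2\,dx_1 dx_2 = \int V_N(z)\,F(z)\,dz, \qquad F(z):=\int|\psi(u,u-z)|^2\,du,
\end{equation*}
so the task reduces to bounding $\|F\|_{L^\infty_z}$ by $\langle(1-\Delta_{x_1})(1-\Delta_{x_2})\psi,\psi\rangle$, independently of $z$, after which H\"older $L^1$--$L^\infty$ against $V_N$ closes the estimate (using $\|V_N\|_{L^1}=\|V\|_{L^1}$).

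To establish the uniform bound on $F(z)$, the plan is to take a partial Fourier transform in $u$ of $g_z(u):=\psi(u,u-z)$: a direct computation expresses $\widehat{g_z}(\eta)$ as an integral of $\widehat\psi(\eta-\xi,\xi)$ against $e^{-i\xi z}$. Applying Plancherel in $u$ and Cauchy--Schwarz in $\xi$ with the splitting
\begin{equation*}
|\widehat\psi(\eta-\xi,\xi)| = \frac{1}{(1+|\eta-\xi|^2)^{1/2}(1+|\xi|^2)^{1/2}}\cdot(1+|\eta-\xi|^2)^{1/2}(1+|\xi|^2)^{1/2}|\widehat\psi(\eta-\xi,\xi)|
\end{equation*}
produces a harmless weight factor $\int(1+|\eta-\xi|^2)^{-1}(1+|\xi|^2)^{-1}\,d\xi$ that is uniformly bounded in $\eta$ precisely because we are in $d=3$ (the dimension enters critically here). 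The surviving factor, after integrating in $\eta$ and relabeling $\xi_1=\eta-\xi$, $\xi_2=\xi$, becomes exactly $\langle(1-\Delta_{x_1})(1-\Delta_{x_2})\psi,\psi\rangle$. This yields $\|F\|_{L^\infty_z}\leq C\langle(1-\Delta_{x_1})(1-\Delta_{x_2})\psi,\psi\rangle$ with a dimensional constant $C$, completing the proof. The dimension-$3$ integrability of the weight is the one place where all three inequalities share the same underlying reason that the argument is delicate.
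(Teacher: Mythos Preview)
The paper does not prove this lemma; it simply cites \cite{ESY07}, Lemma~A.3. Your argument is correct and is essentially the standard one given there: the $L^\infty$ and $L^{3/2}$ bounds follow immediately from scaling together with the Sobolev embedding $H^1(\mathbb{R}^3)\hookrightarrow L^6(\mathbb{R}^3)$, and for the $L^1$ bound the key step is the diagonal trace estimate
\[
\sup_{z}\int|\psi(u,u-z)|^{2}\,du\le C\,\langle(1-\Delta_{x_1})(1-\Delta_{x_2})\psi,\psi\rangle,
\]
which you obtain via partial Fourier transform and the Cauchy--Schwarz splitting with weight $(1+|\eta-\xi|^2)^{-1}(1+|\xi|^2)^{-1}$, exactly as in the cited reference. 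The uniform integrability of this weight in $d=3$ is indeed the crux, as you note.
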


\begin{lemma}\label{lemma:poincare type inequality,appendix}
Suppose that $f\in L^{1}$ such that
$$\int |f(x)||x|^{\frac{1}{2}}dx<\infty.$$
Let $f_{\ve}(x)=\ve^{3}f(\ve x)$ and $d_{0}=\int f dx$, then we have
\begin{align*}
&|\lra{(f_{\ve}(x-y)- d_{0}\delta(x-y))\vp,\psi}|\\
\lesssim&  \ve^{\theta}\lra{(1-\Delta_{x})(1-\Delta_{y})\vp,\vp}^{\frac{1}{2}}\lra{(1-\Delta_{x})(1-\Delta_{y})\psi,\psi}^{\frac{1}{2}}
\end{align*}
for $\theta\in (0,\frac{1}{2})$.
\end{lemma}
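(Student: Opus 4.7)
The plan is to exploit the mean value $\int f_{\ve}(z)\,dz = d_{0}$ and reduce the bilinear form to two bounds on the (shifted) diagonal restriction of $\vp$ and $\psi$. Writing $z=x-y$,
\begin{equation*}
\lra{(f_{\ve}(x-y) - d_{0}\delta(x-y))\vp,\psi} = \int f_{\ve}(z)\bigl(H(z)-H(0)\bigr)\,dz,
\qquad H(z) := \int \vp(y+z,y)\overline{\psi(y+z,y)}\,dy.
\end{equation*}
The moment hypothesis controls $\int |f_{\ve}(z)||z|^{\theta}\,dz = \ve^{-\theta}\int|f(w)||w|^{\theta}dw$, which is finite for every $\theta\in(0,1/2]$ because $|w|^{\theta}\leq 1+|w|^{1/2}$. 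Splitting $H(z)-H(0)$ by $ab-cd=(a-c)b+c(b-d)$ and applying Cauchy--Schwarz in $y$ reduces the lemma to proving, uniformly in $z\in\R^{3}$, the diagonal trace estimate
\begin{equation*}
\Bigl(\int|\psi(y+z,y)|^{2}dy\Bigr)^{1/2} \lesssim \lra{(1-\Delta_{x})(1-\Delta_{y})\psi,\psi}^{1/2}
\end{equation*}
together with the fractional modulus of continuity
\begin{equation*}
\Bigl(\int|\vp(y+z,y)-\vp(y,y)|^{2}dy\Bigr)^{1/2} \lesssim |z|^{\theta}\lra{(1-\Delta_{x})(1-\Delta_{y})\vp,\vp}^{1/2}.
\end{equation*}

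I would establish both inequalities through a single Fourier-side computation. Taking the Fourier transform in $y$ of $W_{z}(y):=\vp(y+z,y)-\vp(y,y)$ yields $\widehat{W_{z}}(\zeta) = (2\pi)^{-3}\int \widehat{\vp}(\xi_{1},\zeta-\xi_{1})(e^{i\xi_{1}\cdot z}-1)\,d\xi_{1}$. A weighted Cauchy--Schwarz with the symmetric weights $\lra{\xi_{1}}\lra{\zeta-\xi_{1}}$, followed by Plancherel in $\zeta$ and the substitution $\xi_{2}=\zeta-\xi_{1}$, collapses the ``$\widehat{\vp}$'' factor to $\lra{(1-\Delta_{x})(1-\Delta_{y})\vp,\vp}$ once $\sup_{\zeta}$ of the remaining integral has been pulled out. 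The problem then reduces to the single analytic estimate
\begin{equation*}
\sup_{\zeta\in\R^{3}}\int \frac{|e^{i\xi_{1}\cdot z}-1|^{2}}{\lra{\xi_{1}}^{2}\lra{\zeta-\xi_{1}}^{2}}\,d\xi_{1} \;\lesssim\; |z|^{2\theta},
\end{equation*}
which I handle via the pointwise bound $|e^{i\xi_{1}\cdot z}-1|^{2}\leq C|\xi_{1}|^{2\theta}|z|^{2\theta}$ (the geometric mean of $|e^{iu}-1|\leq 2$ and $|e^{iu}-1|\leq|u|$) and a uniform-in-$\zeta$ bound for $\int|\xi_{1}|^{2\theta}/(\lra{\xi_{1}}^{2}\lra{\zeta-\xi_{1}}^{2})\,d\xi_{1}$ obtained by splitting $\R^{3}$ into the three regions $\{|\xi_{1}|<|\zeta|/2\}$, $\{|\zeta-\xi_{1}|<|\zeta|/2\}$, and the complement. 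The trace estimate for $\psi$ drops out of the same computation with $|e^{i\xi_{1}\cdot z}-1|^{2}$ replaced by $1$; the relevant integral $\int d\xi_{1}/(\lra{\xi_{1}}^{2}\lra{\zeta-\xi_{1}}^{2})$ is in fact $O(\lra{\zeta}^{-1})$ and therefore uniformly bounded.

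The main obstacle, and what dictates the sharp restriction $\theta<1/2$, is the integrability at infinity of $|\xi_{1}|^{2\theta}/(\lra{\xi_{1}}^{2}\lra{\zeta-\xi_{1}}^{2})$: in three dimensions the integrand decays like $|\xi_{1}|^{2\theta-4}$, forcing precisely $4-2\theta>3$. Heuristically, the mixed $H^{1,1}$ regularity is only just enough to place the diagonal restriction of $\vp$ in $L^{2}$, and paying an extra H\"older exponent $\theta$ at the diagonal costs $2\theta$ orders of Fourier decay, which is exactly the borderline. Once the trace and modulus estimates are in hand, Cauchy--Schwarz in $y$ yields $|H(z)-H(0)|\lesssim |z|^{\theta}\lra{(1-\Delta_{x})(1-\Delta_{y})\vp,\vp}^{1/2}\lra{(1-\Delta_{x})(1-\Delta_{y})\psi,\psi}^{1/2}$, and integrating against $|f_{\ve}(z)|$ with the moment bound produces the claimed inequality.
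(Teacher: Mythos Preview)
Your proof is correct and is essentially the same argument as the paper's, reorganized: both pass to Fourier, exploit $|e^{ia}-1|\lesssim |a|^{\theta}$, apply Cauchy--Schwarz with the weights $\lra{\xi_{1}}\lra{\xi_{2}}$, and reduce everything to the uniform bound $\sup_{\zeta}\int \lra{\xi_{1}}^{-(2-2\theta)}\lra{\zeta-\xi_{1}}^{-2}\,d\xi_{1}<\infty$ valid precisely for $\theta<\tfrac12$. The only difference is packaging---you first reduce in physical space to a diagonal trace bound plus a fractional modulus of continuity for the trace (which is a pleasant conceptual decomposition), whereas the paper works entirely in Fourier variables from the outset and splits $|p|^{\theta}\leq \lra{\xi_{1}}^{\theta}+\lra{\xi_{1}+p}^{\theta}$ symmetrically between $\vp$ and $\psi$.
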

\begin{proof}
For the derivation of NLS, this Poincar$\acute{e}$ type inequality is usually used in the convergence part of the hierarchy method. See, for example, \cite{ESY07,ESY09,ESY10,KSS11}. For completeness, we here include a proof.
Without loss of generality, we might as well assume that $d_{0}=\int f dx=1$.
Switching to Fourier space, we observe that
\begin{align*}
&\lra{\vp,(f_{\ve}(x-y)-\delta(x-y))\psi}\\
=&\int dx dp d\xi_{1} d\xi_{2} \widehat{\vp}(\xi_{1},\xi_{2}) \overline{\widehat{\psi}}
(\xi_{1}+p,\xi_{2}-p)f(x)(e^{i\ve p\cdot x}-1).
\end{align*}
By using $|e^{ia}-1|\leq \min\lr{a,2}\leq 2a^{\theta}$ for $\theta\in (0,1)$ and  $|p|^{\theta}\leq \lra{\xi_{1}}^{\theta}+\lra{\xi_{1}+p}^{\theta}$, we have
\begin{align*}
&\lra{\vp,(f_{\ve}(x-y)-\delta(x-y))\psi}\\
\leq& 2\ve^{\theta}\int f(x)|x|^{\theta}dx \int dp d\xi_{1} d\xi_{2} \widehat{\vp}(\xi_{1},\xi_{2}) \overline{\widehat{\psi}}
(\xi_{1}+p,\xi_{2}-p)|p|^{\theta}\\
\leq& 2\ve^{\theta}\int f(x)|x|^{\theta}dx \int dp d\xi_{1} d\xi_{2} \widehat{\vp}(\xi_{1},\xi_{2}) \overline{\widehat{\psi}}
(\xi_{1}+p,\xi_{2}-p)\lrs{\lra{\xi_{1}}^{\theta}+\lra{\xi_{1}+p}^{\theta}}.
\end{align*}
It suffices to bound the term containing $\lra{\xi_{1}}^{\theta}$, as the term containing $\lra{\xi_{1}-p}^{\theta}$ can be estimated similarly. We rewrite
\begin{align*}
&\int dp d\xi_{1} d\xi_{2} \widehat{\vp}(\xi_{1},\xi_{2}) \overline{\widehat{\psi}}
(\xi_{1}+p,\xi_{2}-p) \lra{\xi_{1}}^{\theta}\\
=&\int dp d\xi_{1} d\xi_{2} \widehat{\vp}(\xi_{1},\xi_{2}) \overline{\widehat{\psi}}
(\xi_{1}+p,\xi_{2}-p)\frac{\lra{\xi_{1}}\lra{\xi_{2}}\lra{\xi_{1}+p}\lra{\xi_{2}-p}}
{\lra{\xi_{1}}^{1-\theta}\lra{\xi_{2}}\lra{\xi_{1}+p}\lra{\xi_{2}-p}}
\end{align*}
By Cauchy-Schwarz inequality,
\begin{align*}
&\int dp d\xi_{1} d\xi_{2} \widehat{\vp}(\xi_{1},\xi_{2}) \overline{\widehat{\psi}}
(\xi_{1}+p,\xi_{2}-p) \lra{\xi_{1}}^{\theta}\\
\leq&\lrc{\int dp d\xi_{1} d\xi_{2} \frac{\lra{\xi_{1}}^{2}\lra{\xi_{2}}^{2}}{\lra{\xi_{1}+p}^{2}\lra{\xi_{2}-p}^{2}}
|\widehat{\vp}(\xi_{1},\xi_{2})|^{2}}^{\frac{1}{2}}\\
&\cdot \lrc{\int dp d\xi_{1} d\xi_{2} \frac{\lra{\xi_{1}+p}^{2}\lra{\xi_{2}-p}^{2}}{\lra{\xi_{1}}^{2-2\theta}\lra{\xi_{2}}^{2}}
|\widehat{\psi}(\xi_{1}+p,\xi_{2}-p)|^{2}}^{\frac{1}{2}}\\
=&\lrc{\int dp d\xi_{1} d\xi_{2} \frac{\lra{\xi_{1}}^{2}\lra{\xi_{2}}^{2}}{\lra{\xi_{1}+p}^{2}\lra{\xi_{2}-p}^{2}}
|\widehat{\vp}(\xi_{1},\xi_{2})|^{2}}^{\frac{1}{2}}\\
&\cdot \lrc{\int dp d\xi_{1} d\xi_{2} \frac{\lra{\xi_{1}}^{2}\lra{\xi_{2}}^{2}}{\lra{\xi_{1}+p}^{2-2\theta}\lra{\xi_{2}-p}^{2}}
|\widehat{\psi}(\xi_{1},\xi_{2})|^{2}}^{\frac{1}{2}}\\
\lesssim &\lra{(1-\Delta_{1})(1-\Delta_{2})\vp,\vp}^{\frac{1}{2}}\lra{(1-\Delta_{1})(1-\Delta_{2})\psi,\psi}^{\frac{1}{2}}
\end{align*}
where in the last inequality we used that
\begin{align*}
\sup_{\xi_{1},\xi_{2}}\int \frac{1}{\lra{\xi_{1}-p}^{2-2\theta}\lra{\xi_{2}-p}^{2}}dp<\infty
\end{align*}
for all $0 \leq \theta<\frac{1}{2}$.
\end{proof}

\bibliographystyle{abbrv}

\bibliography{references}

\end{document}